\documentclass[12pt]{amsart}
\usepackage{a4wide,amssymb,amsthm,amscd,amsmath,mathtools,stmaryrd,url,fullpage,enumerate,mathabx}
\usepackage[initials,alphabetic]{amsrefs}
\usepackage[shortlabels]{enumitem}
\usepackage{bm}
\usepackage{tikz-cd}
\usepackage{stackengine}
\usepackage{xcolor}
\usepackage{hyperref}
\definecolor{couleur_cite}{rgb}{0.05,.4,0.05}
\definecolor{couleur_link}{rgb}{0.05,0.05,0.4}
\hypersetup{bookmarksopen,bookmarksnumbered,colorlinks,linkcolor=couleur_link,citecolor=couleur_cite}

\stackMath
\usepackage[active]{srcltx} 
\usepackage{mathrsfs}
\usepackage{soul}

\usepackage[all,cmtip]{xy}

\numberwithin{equation}{section}

\newcommand{\bT}{\mathbf{T}}
\newcommand{\bPB}{\mathbf{PB}}
\newcommand{\bG}{\mathbf{G}}
\newcommand{\bB}{\mathbf{B}}

\newcommand{\SO}{\mathrm{SO}}

\newcommand{\C}{\mathbb{C}}
\newcommand{\G}{\mathbf{G}}
\newcommand{\GL}{\mathrm{GL}}

\newcommand{\SL}{\mathrm{SL}}
\newcommand{\N}{\mathbb{N}}
\newcommand{\R}{\mathbb{R}}
\newcommand{\Z}{\mathbb{Z}}
\newcommand{\Q}{\mathbb{Q}}
\newcommand{\A}{\mathbb{A}}

\newcommand{\ClD}{\mathrm{Cl}_D}

\newtheorem{theorem}{Theorem}[section]
\newtheorem{lemma}{Lemma}[section]
\newtheorem{proposition}{Proposition}[section]

\newtheorem{definition}[theorem]{Definition}
\newtheorem{corollary}[theorem]{Corollary}

\theoremstyle{remark}
\newtheorem{remark}[theorem]{Remark}
\newtheorem{remarks}[theorem]{Remarks}

\begin{document}

\author{Valentin Blomer}
\author{Farrell Brumley}
\author{Maksym Radziwi\l\l}
 
\address{Mathematisches Institut, Endenicher Allee 60, 53115 Bonn, Germany}
\email{blomer@math.uni-bonn.de}

\address{Sorbonne Université, Université Paris Cité, CNRS, IMJ-PRG, F-75005 Paris, France}
\email{brumley@imj-prg.fr}

\address{Courant Institute of Mathematical Sciences, 251 Mercer Street, NY 10012, USA}
\email{maksym.radziwill@gmail.com}
  
\title{Joint Linnik problems}

\thanks{The first author is supported by DFG through SFB-TRR 358 and EXC-2047/1 - 390685813 and by ERC
Advanced Grant 101054336. The second author is supported by the Institut Universitaire de France and ANR-FNS Grant ANR-24-CE93-0016. The third author is supported by NSF grant NSF DMS-2401106}

\begin{abstract} 
We prove a conjecture of Michel--Venkatesh on joinings
of distinct Linnik problems, in the setting of simultaneous quaternionic
embeddings of imaginary quadratic fields having sufficiently many small
split primes. This splitting condition is known to hold for all
but $O((\log\log X)^{1 + o(1)})$ discriminants up to $X$. We also
treat a non-equivariant form of this conjecture proposed by
Aka--Einsiedler--Shapira, which in particular applies to the
classical Gau{\ss} construction joining Linnik points on the sphere with
CM points on the modular surface.
\end{abstract}

\subjclass[2010]{Primary: 11F67, 11F70, 11M41}
\keywords{Toric periods, simultaneous equidistribution, $L$-functions, Siegel zeros}

\setcounter{tocdepth}{1} 
\maketitle

\section{Introduction}

Problems of Linnik type, such as the equidistribution of points in $\Z^3$ of large norm projected to the unit sphere and the equidistribution of CM points of large discriminant on the modular curve, have long been a fertile ground for the development of tools in ergodic theory and analytic number theory, initiated by Linnik \cite{Li} and culminating in their resolution by Duke \cite{Du}.

A new generation of equidistribution problems has been put forward by Michel and Venkatesh \cite{MV} and concerns simultaneous versions, or \textit{joinings}, of Duke's theorem. In \cite{BB}, the first two authors proved partial results on joinings of two distinct Linnik problems, using techniques emanating from automorphic forms and analytic number theory, in particular, the high moment method of Soundararajan \cite{So} for central values of $L$-functions. These results were conditional on two major assumptions: the generalized Riemann hypothesis (GRH), applied to automorphic $L$-functions up to degree 8, and the cuspidality of testing functions.

In this paper, we reconsider this problem from a new viewpoint, which favors mollification, period formulae, and spectral theory. These techniques allow us to substantially relax the main hypotheses of \cite{BB}  on simultaneous equidistribution, to the point of removing any automorphic assumptions other than the quantitative splitting behavior of small primes. Most notably, we replace GRH with a much weaker condition roughly equivalent to the absence of Siegel zeros for quadratic Dirichlet characters. Moreover, we succeed in treating the continuous spectrum, thereby eliminating the cuspidality restriction in \cite{BB} when one of the two factors is non-compact. This applies, for example, to the orthogonal complement procedure of Gau{\ss}, in which one joins integer points on the sphere with CM points on the modular surface, as investigated by Aka--Einsiedler--Shapira \cite{AES}. 

Our main result is Theorem \ref{thm11}, which we summarize informally as follows; complete details and an effective version will be given in Section \ref{second-intro}.

\begin{theorem}\label{thm:vague}
The simultaneous equidistribution conjecture of Michel--Venkatesh, as well as a quadratic variant due to Aka--Einsiedler--Shapira, hold for quaternionic varieties over $\Q$ at almost maximal level, for discriminants $-D$ such that the quadratic Dirichlet $L$-function $L(s, \chi_{-D})$ has no zeros in $|s-1| \leq \psi(D)/\log D$ for some $\psi(D) \rightarrow \infty$.  
\end{theorem}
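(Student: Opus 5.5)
Since Theorem \ref{thm:vague} is an informal summary of Theorem \ref{thm11}, the plan is to prove the effective statement via the Weyl criterion on the product space $X_1\times X_2$ of two quaternionic varieties (or a quaternionic variety times the modular surface in the Aka--Einsiedler--Shapira setting). One takes a joint orbit of a torus $T_D$ coming from the simultaneous embedding of $\Q(\sqrt{-D})$, and tests against products $\phi_1\otimes\phi_2$ of $\Delta$-eigenfunctions. If both factors are constant the claim is trivial. If exactly one factor is constant, the statement reduces to Duke's theorem, in its Waldspurger / subconvexity form. The real work is the case where both factors are nonconstant. Then the Weyl sum is $\sum_{t\in T_D}\phi_1(tx_1)\phi_2(tx_2)$, and one expands it over class group characters $\chi$. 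By Waldspurger's formula (Eichler--Shimizu--Jacquet--Langlands to pass to $\GL_2$), each term becomes a product of square roots of central values $L(1/2,\pi_1\otimes\chi)^{1/2}L(1/2,\pi_2\otimes\bar\chi)^{1/2}$, divided by $L(1,\chi_{-D})$ and weighted by local factors. For Eisenstein $\phi_2$ on the modular surface, the second factor is replaced by $|L(1/2+it,\chi)|^2$, i.e.\ Hecke's formula. The task is then to show
\[
\frac{1}{h(-D)}\sum_{\chi}\frac{L(1/2,\pi_1\otimes\chi)^{1/2}L(1/2,\pi_2\otimes\bar\chi)^{1/2}}{D^{1/2}L(1,\chi_{-D})}=o(1).
\]

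By Cauchy--Schwarz against the second moment, this amounts to beating the trivial bound in a mixed moment. The natural approach is to show that $L(1/2,\pi_1\otimes\chi)$ and $L(1/2,\pi_2\otimes\chi)$ are rarely simultaneously large. Rather than using GRH and Soundararajan's method, I would mollify. One introduces short Dirichlet polynomials $M_j(\chi)$ over split primes $p\le D^{\delta}$, which approximate $L(1/2,\pi_j\otimes\chi)^{-1/2}$. Then one bounds the twisted first and second moments
\[
\sum_\chi L(1/2,\pi_j\otimes\chi)|M_j(\chi)|^2,
\]
again through period formulae: the twisted sums become inner products of translates of $\phi_j$ by Hecke operators along the torus orbit. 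Equidistribution of Hecke translates of a single toric orbit (Duke with the power saving available for small Hecke shifts) then gives asymptotics. Meanwhile, $\sum_\chi |M_1(\chi)M_2(\chi)|^{-1}$-type quantities are controlled by orthogonality of class group characters on split-prime ideals of small norm. This is where the hypothesis of many small split primes enters. Since $\pi_1\not\simeq\pi_2$, or the two embeddings are inequivalent, the Euler products of the mollifiers differ at a positive proportion of split primes. The "mollified correlation" therefore decays like $\exp(-c\sum_{p\le D^\delta,\ \chi_{-D}(p)=1}\lambda\text{-discrepancy}/p)$, which tends to $0$ provided $\sum_{p\le D^\delta\text{ split}}1/p\to\infty$.

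The splitting input comes from the zero-free hypothesis. If $L(s,\chi_{-D})$ has no zero in $|s-1|\le\psi(D)/\log D$, then a standard explicit-formula argument gives $\sum_{p\le D^{\delta}}\chi_{-D}(p)/p\ge -\log\log$-small error. Equivalently, $\sum_{\text{split }p\le D^\delta}1/p\gg\log\psi(D)\to\infty$, and this also yields $L(1,\chi_{-D})\gg 1/\psi$-type lower bounds, enough to absorb the $L(1,\chi_{-D})$ denominator. The continuous spectrum for the non-compact factor is handled separately. It requires uniformity in $t$ for the Eisenstein moments and a truncation of the fundamental domain, with the cusp contribution bounded by Duke's theorem.

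I expect the main obstacle to be the mollified moment computation: proving asymptotics, not just upper bounds, for $\sum_\chi L(1/2,\pi\otimes\chi)|M(\chi)|^2$ with mollifier length a fixed power $D^\delta$ and no GRH. I would attack this via the period / Hecke-operator interpretation combined with effective equidistribution of Hecke-translated toric orbits, with explicit spectral gap bounds.
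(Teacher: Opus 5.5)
Your outline (Weyl criterion, Duke for the one-constant case, mollification, twisted first moments via periods and Hecke shifts, zero-free region giving split primes) follows the right general direction. However, the step that carries the theorem is missing. You assert that because $\pi_1\not\simeq\pi_2$, the mollifier Euler products ``differ at a positive proportion of split primes'', so decay follows once $\sum_{\text{split }p}1/p\to\infty$. Nothing you state prevents the split primes from concentrating where $\lambda_{\pi_1}(p)\approx\lambda_{\pi_2}(p)$. Split primes have density only $1/2$, while pairs of icosahedral forms can have equal traces on density $3/5$, and a dihedral form has $\lambda_\pi(p)=0$ on density $1/2$.

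The paper rules out this conspiracy by a pigeonhole in which two densities must add up to more than $1$:
\begin{enumerate}
\item $|\lambda_{\pi_1}(p)-\lambda_{\pi_2}(p)|\ge\varepsilon$ on primes of density at least $0.508$ (Theorem~\ref{thm15}). This needs automorphy and cuspidality of ${\rm sym}^k$ for $k\le4$. It also needs the fact that Jacquet--Langlands lifts at almost maximal level have twisted Steinberg components at ${\rm Ram}(\bB_i)$, hence are not of polyhedral type. Distinctness of ${\rm sym}^k\pi_1$ and ${\rm sym}^k\pi_2$ comes from conductors, which is where $\bB_1\not\simeq\bB_2$ is used, not merely $\pi_1\not\simeq\pi_2$. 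Finally it needs an explicit polynomial from a tight linear program; the naive Cauchy--Schwarz argument only gives $2/5$.
\item Split primes have density at least $\delta$, for every $\delta<1/2$, in every range $[X,X^2]$ with $X\ge D^{1/\psi(D)}$ (Theorem~\ref{thm14}).
\end{enumerate}
Your splitting input $\sum_{\text{split}}1/p\gg\log\psi(D)$ is too weak for this. Moreover, the ``standard explicit-formula argument'' needs a zero-free region of width $\asymp\log\log D/\log D$; reaching arbitrarily slowly growing $\psi$ requires Theorem~\ref{zero-theorem}. Separately, your alternative case ``the two embeddings are inequivalent'' cannot be handled this way: if $\pi_1=\pi_2$ there is no discrepancy to exploit.

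Next, the Cauchy--Schwarz step is not workable as written. Orthogonality cannot control $\sum_\chi|M_1(\chi)M_2(\chi)|^{-1}$, since inverses of Dirichlet polynomials blow up where the polynomials are small. Also, the separate mollified moments $\sum_\chi L(1/2,\pi_j\otimes\chi)|M_j(\chi)|^2$ carry no decay. What is needed are Dirichlet-polynomial weights with $\mathcal{P}^{(1)}(\chi)\mathcal{P}^{(2)}(\chi)\ge1$ pointwise, approximating the ratio $|W_{\mathscr{D}_2}(f_2,\chi)|/|W_{\mathscr{D}_1}(f_1,\chi)|$ and its inverse. The paper builds them from truncated exponentials $E_{\ell_j}(\pm\mathcal{Q}_j(\chi))$, with $\mathcal{Q}_j$ made from $\lambda_{\pi_1}(p)-\lambda_{\pi_2}(p)$. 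It uses $E_\ell(x)E_\ell(-x)\ge1$ for even $\ell$, and a multi-scale choice of $\ell_j$ keeps the length below $D^{2\sqrt c}$ while Rankin's trick controls the tails.

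The quadratic Aka--Einsiedler--Shapira variant, which is part of the statement, is not addressed at all. There the factors are $W_{\mathscr{D}_1}(f_1,\chi^2)$ and $W_{\mathscr{D}_2}(f_2,\bar\chi)$. Mollifying leads to $\sum_\chi|W_{\mathscr{D}_1}(f_1,\chi^2)|^2\chi(\mathfrak n)$, which is out of reach without GRH unless $[\mathfrak n^*]$ is a square class, and squaring on $\ClD$ is not surjective. The paper instead conditions on a typical set $\mathcal X$ of characters, using Chebyshev for the first factor and Markov via $\mathcal{Q}(\chi^2)^2$ for the second. It then applies Cauchy--Schwarz separately on $\mathcal X$ and $\mathcal X^c$, and averages over quadratic characters to absorb $|\ClD[2]|$.

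Finally, handling the spectral Eisenstein series pointwise in $t$ loses a power of $\log D$, which a saving of size $\psi(D)^{-\rho}$ cannot absorb. This is why the paper uses incomplete Eisenstein series with an asymmetric mollifier, and exploits the $t$-integral in Lemma~\ref{calpha}.
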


By a density theorem \cite{Ga}, applied to intervals of the form $[Y, Y^2)$, Theorem \ref{thm:vague} holds for all but at most  
$O((\log\log X)^{1 + o(1)})$ 
discriminants up to $X$.

We think of $\psi$ as growing arbitrarily slowly, so that the hypothesis on $L(s, \chi_{-D})$ can be seen as an $o(1)$-strengthening of a no Siegel zero condition. We view this condition as an analytic formulation of the abundance of small split primes in quadratic number fields. As such, it captures, in a quantitative way, the types of splitting conditions present in ergodic approaches to this problem, which we discuss below. It is interesting to note that ours is the same type of zero-free region under which Linnik claimed (without proof) to remove his congruence condition for the equidistribution of primitive integer points on spheres \cite[Theorem 2]{Li}.

Several works have been devoted to proving versions of the simultaneous equidistribution conjecture of Michel--Venkatesh and its variants, subject to a pair of splitting conditions, using methods from homogeneous dynamics \cites{AES,Kh,ALMW}. The key ingredient is a powerful joinings theorem of Einsiedler and Lindenstrauss \cite{EL}, which classifies measures invariant under the action of a higher rank diagonalisable subgroup. When applied to tori associated with imaginary quadratic field extensions $\Q(\sqrt{-d})$, this requires fixing two distinct primes $p_1, p_2$ and asking that they split in $\Q(\sqrt{-d})$. More generally, it is possible to fix a finite set of primes $\{p_1, \ldots , p_k\}$ and restrict to squarefree $d$ for which there exists $1 \leq i < j \leq k$ such that $p_i$ and $p_j$ split in $\Q(\sqrt{-d})$. Despite this flexibility, regardless of how one chooses the set of primes $\{p_1, \ldots , p_k\}$, the above construction can never produce an exceptional set of cardinality smaller than $o(X)$. 

Finally, we remark that Theorem \ref{thm:vague}, when applied to two distinct Shimura varieties, implies a strong form of the Andr\'e--Oort conjecture, first proved under generalized Riemann hypothesis for $L(s,\chi_{-D})$ by Yafaev \cite{Ya}, but with Zariski density replaced by equidistribution.
\section{Examples, historical context, and the main theorem}\label{second-intro}

In this section we give a precise formulation of our main result, stated as Theorem \ref{thm11}, and 
explain the new ideas and methods. 
Before doing so, we present two concrete instances (Theorems \ref{thm-AES-intro} and \ref{thm-sphere-ellipsoid}) which highlight many of the new features of our work. 

\subsection{Orthogonal complement of Gau{\ss}}

For a positive squarefree integer $d$ let ${\rm R}(d)=\{x\in\Z^3: \|x\|^2=d\}$ denote the set of integral representations of $d$ as the sum of three squares. The classical theorem of Legendre and Gau{\ss} affirms that ${\rm R}(d)$ is non-empty precisely when $d$ is locally admissible, meaning that $d$ avoids the obvious congruence obstruction of $7\bmod 8$. On the other hand, for a negative fundamental discriminant $-D$ consider the set $\mathcal{Q}_{-D}$ of $\SL_2(\Z)$-equivalence classes of primitive binary quadratic forms of discriminant $-D$. 

In \cite[Art.\ 291]{Gau}, Gau{\ss} discovered a beautiful link between these two settings. For $x \in {\rm R}(d)$ he considers the rank-2 orthogonal lattice $\Lambda_x=x^{\perp} \cap \Z^3$. Fix the standard orientation of $\Z^3$ and give $\Lambda_x$ the induced orientation under wedge product with $x$. The quadratic form $\phi_x$ obtained by restriction of $(\|x\|^2,\Z^3)$ to $\Lambda_x$ (and rescaled by $1/2$ if the form is even) is of discriminant $-D={\rm disc}(\Q(\sqrt{-d}))$. Concretely, if $(B, C)$ is a positively oriented $\Z$-basis of $\Lambda_x$, one has $\phi_x(t,u)=\|B t+C u\|^2$ or $\frac12\|B t+C u\|^2$ according to whether $d\equiv 1,2\bmod 4$ or $d\equiv 3\bmod 8$. Let $[\phi_x]\in\mathcal{Q}_{-D}$ denote the $\SL_2(\Z)$-equivalence class of $\phi_x$. Gau{\ss}'s construction descends to equivalence classes $\mathcal{R}(d):=\SO_3(\Z)\backslash {\rm R}(d)$, yielding a map
\[
{\rm Orth}(d): \mathcal{R}(d)\rightarrow \mathcal{Q}_{-D}, \quad [x]\mapsto [\phi_x],
\] 
whose image lies in a single genus (depending on $d$).
\subsubsection*{Numerical example}

In \cite[Art.\ 292]{Gau} Gau{\ss} worked out this correspondence numerically in the case of $d=770=2\cdot 5\cdot 7\cdot 11$. On one hand we have $|\mathcal{R}(770)|=16$, and a complete list of elements up to $\SO_3(\Z)$-equivalence can be given by
\[
(15,16,17),\; (1,12, 25),\;  (8,9,25),\; (4,15, 23),\; (3,19,20),\;  (4,5,27),\;  (9,17, 20),\;  (5,13,24)
\]
together with their negatives. On the other hand, ${\rm disc}(\Q(\sqrt{-770}))=-3080$ and Gau{\ss} calculates that $|\mathcal{Q}_{-3080}|=32$. There are $8=2^{4-1}$ genera, each of size $4$. Gau{\ss} shows that the image of ${\rm Orth}(770)$ lies in the genus consisting of the proper equivalence classes of
\[
\phi_\pm = 6t^2\pm 4tu+129u^2,\quad \phi'_\pm =19t^2\pm 6tu+41u^2.
\]
For example, if $x=\pm (15,16,17)$ then $\Lambda_x=\{y\in\Z^3: 15y_1+16y_2+17y_3=0\}$ admits $(B,\pm C)=( (1,-2,1),\pm (8,1,-8))$ as an oriented $\Z$-basis and $\|Bt\pm Cu\|^2=\phi_\mp (t,u)$.

\bigskip

We now turn our attention to the asymptotic distribution of $\mathcal{R}(d)$ and $\mathcal{Q}_{-D}$, both separately and jointly.

For $d$ locally admissible, the combined work of Gau{\ss}, Dirichlet, and Siegel shows that $|\mathcal{R}(d)|=d^{1/2+o(1)}$. In the mid 20th century, Linnik \cite{Li} investigated the angular distribution of such solutions using his ergodic method. Let $S^2$ be the unit sphere in $\R^3$,  equipped with the uniform probability measure $\mu_{S^2}$. Let $\mathcal{S}^2=\SO_3(\Z)\backslash S^2$ and consider
\[
{\rm Arg}:\mathcal{R}(d)\rightarrow\mathcal{S}^2,\quad [x]\mapsto {\rm Arg}_{[x]}=\SO_3(\Z).(d^{-1/2}x).
\]
Linnik proved that, for any fixed auxiliary prime $p>2$, the set $\{{\rm Arg}_{[x]} :\, [x] \in \mathcal{R}(d)\}$ equidistributes on $\mathcal{S}^2$ relative to the quotient measure $\mu_{\mathcal{S}^2}$ as $d \rightarrow \infty$ along locally admissible squarefree integers for which $p>2$ splits in $\Q(\sqrt{-d})$. Duke \cite{Du} and Golubeva--Fomenko \cite{GF} subsequently removed Linnik's splitting condition, and provided a power-savings convergence rate, using automorphic methods and a breakthrough of Iwaniec \cite{Iw} on bounds for Fourier coefficients of half-integral holomorphic modular forms.

Similarly, for a negative fundamental discriminant $-D$, one can consider the asymptotic distribution of the set $\mathscr{H}_D=\{\tau_{[\phi]}: [\phi]\in\mathcal{Q}_{-D}\}$ of CM points of discriminant $-D$, where $\tau_{[\phi]}\in Y_0(1)$ is the $\SL_2(\Z)$-orbit of the unique root of $\phi(z,1)$ lying in the upper half-plane. Linnik's student Skubenko \cite{Sku} showed that $\mathscr{H}_D$ equidistributes on $Y_0(1)$ relative to the normalized hyperbolic measure $\mu_{Y_0(1)}$, provided one restricts to those $D$ for which a fixed auxiliary prime $p$ splits in $\Q(\sqrt{-D})$. By extending Iwaniec's bounds to non-holomorphic half-integral weight forms, Duke \cite{Du} removed Skubenko's auxiliary congruence condition, and, as above, provided a power-savings rate of convergence. Duke's result in fact applies to the slightly smaller set of CM points restricted to a fixed genus.

\medskip

The \textit{graph} of the Gau{\ss} orthogonal complement ${\rm Orth}(d)$ can be realized as
\begin{equation}\label{Gamma-d}
\Gamma_d=\{({\rm Arg}_{[x]}, \tau_{[\phi_x]}) : [x] \in \mathcal{R}(d)\}\subset \mathcal{S}^2\times Y_0(1).
\end{equation}
Since the projection of $\Gamma_d$ onto each factor equidistributes as $d\rightarrow\infty$, it is natural to ask whether $\Gamma_d$ becomes equidistributed in the product space. In \cite[Conj.\ 1.1]{AES} this was conjectured to be the case, under no conditions on $d\rightarrow\infty$ other than local representability. Our first main theorem yields progress towards this conjecture.
 
\begin{theorem}\label{thm-AES-intro}
The set $\Gamma_d$ equidistributes in $\mathcal{S}^2\times Y_0(1)$ according to the product measure $\mu_{\mathcal{S}^2}\otimes\mu_{Y_0(1)}$, as $d\rightarrow\infty$ along squarefree locally admissible integers $d$ for which $L(s, \chi_{-D})$ 
 has no zeros in $|s-1|\leq \psi(D)/\log D$ for some $\psi(D)\rightarrow\infty$.
\end{theorem}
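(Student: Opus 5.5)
By Weyl's criterion it suffices to show that for test functions $F=f\otimes g$ on $\mathcal{S}^2\times Y_0(1)$, with $f$ ranging over the spherical harmonics of $\mathcal{S}^2$ and $g$ over a spanning set of $L^2(Y_0(1))$, the normalized sums $|\mathcal{R}(d)|^{-1}\sum_{[x]} f({\rm Arg}_{[x]})\,g(\tau_{[\phi_x]})$ converge to $\int f\,d\mu_{\mathcal{S}^2}\int g\,d\mu_{Y_0(1)}$. The family for $g$ consists of constants, Maa{\ss} cusp forms, and incomplete Eisenstein series (or wave packets of $E(z,1/2+it)$).

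If either factor is constant, the sum reduces to the marginal equidistribution theorems of Duke and Golubeva--Fomenko. Since ${\rm Orth}(d)$ lands in a single genus, we also need Duke's genus-restricted version for CM points, handled by summing over genus characters.

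For the genuinely joint terms, we use the adelic reformulation. By Gau{\ss}'s construction, $\mathcal{R}(d)$ is a torsor under ${\rm Pic}(\mathcal{O}_{-D})$, and the pair $({\rm Arg}_{[x]},\tau_{[\phi_x]})$ is a single toric orbit of $T=\mathrm{Res}_{K/\Q}\mathbb{G}_m/\mathbb{G}_m$, $K=\Q(\sqrt{-d})$, embedded diagonally in $\bG=B^\times_{2,\infty}\times\GL_2$. Here $B_{2,\infty}$ is the Hamilton quaternions, and the map $x\mapsto\phi_x$ is the optimal embedding of $K$ into $B$ and into $M_2(\Q)$ attached to the same class. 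The joint Weyl sum then becomes a toric period $\sum_{t\in T(\Q)\backslash T(\A)/U} \varphi_1(t g_1)\varphi_2(t g_2)$. Spectrally expanding over the class group characters $\chi$ turns it into $h(-D)^{-1}\sum_\chi P_{\varphi_1}(\chi)\overline{P_{\varphi_2}(\chi)}$, where $P_{\varphi_i}(\chi)$ are the individual Hecke twisted toric periods.

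By Waldspurger/Gross--Zagier-type period formulae, $|P_{\varphi_i}(\chi)|^2 \asymp D^{-1/2}L(1/2,\pi_i\times\theta_\chi)/L(1,\chi_{-D})^{\ast}$ (with $\pi_1$ the Jacquet--Langlands lift). Cauchy--Schwarz reduces the problem to showing
\[
\frac{1}{h(-D)}\sum_\chi \frac{|L(1/2,\pi_1\times\theta_\chi)L(1/2,\pi_2\times\theta_\chi)|}{L(1,\chi_{-D})^{2}}\,D^{-1/2}\cdot h(-D)\;=\;o(1).
\]
Neither trivial bounds nor individual subconvexity suffice; one needs a saving on average over $\chi$ of a product of two degree-4 central values. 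This is the main obstacle.

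The plan there is mollification. We bound $L(1/2,\pi_1\times\theta_\chi)^{1/2}L(1/2,\pi_2\times\theta_\chi)^{1/2}$ by a mollified first moment: with mollifiers $M_i(\chi)$ approximating $L(1/2,\pi_i\times\theta_\chi)^{-1/2}$ of length $D^{\delta}$, we compute the twisted first moments $\sum_\chi L(1/2,\pi_i\times\theta_\chi)|M_i(\chi)|^2$ via the period formula backwards. These become Hecke-operator-shifted periods whose main terms are controlled by Duke-type equidistribution. The mollifier sums over split primes $p\le D^\delta$, weighted by $\lambda_{\pi_i}(p)$.

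The gain comes from the fact that on average over split primes $\lambda_{\pi_1}(p)\lambda_{\pi_2}(p)$ is small while $\lambda_{\pi_i}(p)^2\approx1$ (Rankin--Selberg). This produces a factor $\exp(-c\sum_{p\ \text{split},\,p\le D^\delta}1/p)$. The zero-free hypothesis $|s-1|\le\psi(D)/\log D$ guarantees that $\sum_{p\le D^\delta,\ \chi_{-D}(p)=1}1/p \ge \tfrac12\log\log D - O(\log\psi$-losses$)$, and also gives $L(1,\chi_{-D})\gg 1/\psi$-type lower bounds. Together these show the decay beats the $L(1,\chi_{-D})$ denominators, yielding $o(1)$.

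Finally, for the Eisenstein part, $\varphi_2=E$ gives $L(1/2,\pi_1\times\theta_\chi)\,|L(1/2+it,\chi)|^2$, and the same mollifier argument applies with $\lambda_E(p)=p^{it}+p^{-it}$. We must check that $\lambda_{\pi_1}(p)\lambda_E(p)$ still averages to zero and handle the residual growth of the incomplete Eisenstein series near the cusp by truncation, using equidistribution of the marginal.
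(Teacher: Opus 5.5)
\textbf{The Gau{\ss} map is not equivariant, so your reduction targets the wrong sum.} Your proposal rests on the claim that $({\rm Arg}_{[x]},\tau_{[\phi_x]})$ runs over a \emph{diagonal} torus orbit. That claim is false. If $t$ fixes $x$, conjugation by $t$ acts on the plane $x^\perp$ through $t/\bar t$, so $[\phi_{[\mathfrak a].[x]}]=[\mathfrak a]^2.[\phi_x]$. Hence $\Gamma_d$ is the $\nu=2$ packet \eqref{Gammad}. Your own observation that ${\rm Orth}(d)$ lands in a single genus is the symptom of this: the second coordinate only sees the coset of squares. Parseval therefore gives $\sum_\chi W_{\mathscr{D}_1}(f_1;\chi^2)W_{\mathscr{D}_2}(f_2;\overline{\chi})$, which pairs $L(1/2,\pi_1\times\chi^2)$ with $L(1/2,\pi_2\times\chi)$, not $\sum_\chi P_{\varphi_1}(\chi)\overline{P_{\varphi_2}(\chi)}$.

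The equivariant mollification you describe does not transfer to this setting, for two reasons. First, balancing the two factors forces twisted moments $\sum_\chi|W_{\mathscr{D}_1}(f_1,\chi^2)|^2\chi(\mathfrak n)$. Theorem \ref{thm12} reaches these only when $[\mathfrak n]$ is a square; otherwise you need a small-norm representative of a square-root class, which is GRH territory. Second, the $|\mathrm{Cl}_D[2]|$-to-one collapse of $\chi\mapsto\chi^2$ must be compensated.

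The paper handles this as the case $\nu=2$, $\bB_1=\bB^{(2,\infty)}$, $\bB_2={\rm Mat}_2$ of Theorem \ref{thm11}, in Section \ref{sec-off}:
\begin{itemize}
\item It averages over quadratic characters $\psi$, using the $\psi$-twisted moments of Theorem \ref{thm12}.
\item It replaces mollifiers by a Chebyshev/Markov split along the typical set $\mathcal X=\{\chi:|\mathcal Q(\chi^2)-2T_D(\pi_1)|\le T_D(\pi_1)/10\}$, where $\mathcal Q$ is built from $\lambda_{\pi_1}$ alone.
\item Positivity moves $\chi^2$ to $\chi$ in the first factor at a cost of $|\mathrm{Cl}_D[2]|$, which the $\psi$-average recovers. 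The second factor only needs shifts by squares $\mathfrak p_1^2\mathfrak p_2^2$.
\end{itemize}
The result is $P^{\Delta_2}_{\mathscr{D}_1,\mathscr{D}_2}(f_1\otimes f_2)\ll(1+T_D(\pi_1))^{-1/2}$.

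\textbf{The arithmetic input is also unjustified.} The zero-free disc says nothing about the joint behaviour of $\chi_{-D}(p)$ and $\lambda_{\pi_i}(p)$. Asserting that $\lambda_{\pi_1}(p)\lambda_{\pi_2}(p)$ averages to $0$, or $\lambda_{\pi_1}(p)^2$ to $1$, over split $p\le D^\delta$ is a statement of GRH strength about twists by $\chi_{-D}$.

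What the hypothesis actually gives (Theorem \ref{thm14}) is only this: for $X\ge D^{1/\psi(D)}$, split primes carry a proportion $\delta<1/2$ of $\sum_{X\le p\le X^2}1/p$. Below $D^{1/\psi(D)}$ there is no information, so $\tfrac12\log\log D$ is out of reach. Nor does the hypothesis rule out a conspiracy in which the split primes are exactly those where $\lambda_{\pi_1}(p)$ is small; dihedral forms, for instance, vanish on half the primes.

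The paper breaks this with Theorem \ref{thm15}. Since $\pi_1={\rm JL}(\sigma_1)$ is twisted Steinberg at $2$, ${\rm sym}^k\pi_1$ is cuspidal for $k\le4$ (Lemma \ref{lem22}). The positive polynomial of Lemma \ref{polyh} then gives $|\lambda_{\pi_1}(p)|\ge\varepsilon$ on a set of primes of density at least $0.58>1/2$. Pigeonholing this against Theorem \ref{thm14} yields $T_D(\pi_1)\gg\log\psi(D)$ (Corollary \ref{lem61}), and hence the rate $(\log\psi(D))^{-1/2}$.

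Your Eisenstein step also fails as stated. An incomplete Eisenstein series superposes all $t$ and has non-multiplicative coefficients, so no mollifier built from $p^{it}+p^{-it}$ is available. The spectral $E(z,1/2+it)$, on the other hand, has twisted second moments growing like powers of $\log D$. In the paper's argument the problem never arises: only $\lambda_{\pi_1}$ enters $\mathcal Q$, and $E_\Psi$ is used only through the Eisenstein case of Theorem \ref{thm12}.
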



\begin{remark}\label{rem22} The orthogonal complement construction applies, in fact, to any integral ternary quadratic form \cite[Art.\ 280]{Gau}. Our methods allow us to prove similar theorems for quadratic forms arising from the reduced norm of a quaternion algebra over $\Q$, when restricted to the trace-zero elements of a maximal order. We discuss the definite and indefinite cases below.

In the definite case, we may take as an example the form
$f(x)=2x_1^2 + 5x_2^2 + 10x_3^2$ and consider, for a positive squarefree integer $d$, the set ${\rm R}_f(d) := \{x \in \Z^3 : f(x)= d\}$. The set ${\rm R}_f(d)$ is non-empty precisely when $d$ avoids the local congruence obstructions of $3\bmod 8$ and $1,4\bmod 5$. Let $\mathcal{R}_f(d):=\SO_f(\Z)\backslash {\rm R}_f(d)$. The same procedure as before defines a map
\[
{\rm Orth}_f(d) :\mathcal{R}_f(d)\rightarrow\mathcal{Q}_{-D},\; [x]\mapsto [\phi_x],
\]
with image a single genus (depending on $d$). Define the ellipsoid $E^2 := \{x \in\R^3 : f(x)= 1\}$ and its quotient $\mathcal{E}^2=\SO_f(\Z)\backslash E^2$. Let $\mu_{\mathcal{E}^2}$ be the measure on $\mathcal{E}^2$ obtained as the quotient of the unique probability normalized $\SO_f(\R)$-invariant Borel measure on $E^2$. Just as in the case of the sphere, we have a norm one projection map
\[
{\rm Arg}: \mathcal{R}_f(d)\rightarrow\mathcal{E}^2,\quad [x]\mapsto {\rm Arg}_{[x]}=\SO_f(\Z).(d^{-1/2}x).
\]
In \cite{DSP} it was shown that $\{{\rm Arg}_{[x]}: [x]\in\mathcal{R}_f(d)\}$ equidistributes on $\mathcal{E}^2$ relative to $\mu_{\mathcal{E}^2}$ as $d\rightarrow\infty$ along locally admissible squarefree integers. With $\Gamma_d$ defined similarly to \eqref{Gamma-d}, the analogue of Theorem \ref{thm-AES-intro} with $\mathcal{S}^2$ replaced by $\mathcal{E}^2$ holds and follows from our Theorem \ref{thm11}.

In the indefinite case, the corresponding map ${\rm Orth}_f(d): \mathcal{R}_f(d)\rightarrow\mathcal{Q}_{-D}$ is defined on equivalence classes of integer points on \textit{hyperboloids}. Depending on the sign of the integer $d$ being represented, the image $[\phi_x]\in\mathcal{Q}_{-D}$ will correspond to either a CM point or a closed geodesic in $Y_0(1)$. Our Theorem \ref{thm11} covers this setting, as long as $f$ is anisotropic. The isotropic determinant form $x_2^2-x_1x_3$ is outside the scope of that theorem, however, as it requires different techniques to sample from two incomplete Eisenstein series. We plan to address this question in a future paper, using ideas from multiplicative Gau{\ss}ian chaos and the work of Adam Harper.
\end{remark}
\subsection{Simultaneous equidistribution}
We may reframe the preceding discussion in terms of quaternion algebras, optimal embeddings, and class group actions. This more structural framework will allow us to state our main results giving concrete instances of the Michel--Venkatesh simultaneous equidistribution conjecture \cite[Section 6.4.1]{MV}.

We focus on the example of Theorem \ref{thm-AES-intro}. Let $\bB^{(2,\infty)}$ denote the unique (up to isomorphism) quaternion algebra over $\Q$ which is ramified at $2$ and $\infty$, and $\mathbb{O}_{\rm Hurw}$ the maximal order determined by the Hurwitz quaternions in $\bB^{(2,\infty)}(\Q)$. In this case, the projective unit group $\mathbb{O}_{\rm Hurw}^\times/\{\pm 1\}$ may be identified with the unique index $2$ subgroup $\SO_3^+(\Z)$ of $\SO_3(\Z)$. It is more convenient to work with the finer equivalence relation on ${\rm R}(d)$ modulo $\SO_3^+(\Z)$. Accordingly we write $\widetilde{\mathcal{R}}(d)=\SO_3^+(\Z)\backslash {\rm R}(d)$. Then every $x \in {\rm R}(d)$ defines an optimal embedding of the ring of integers $\mathscr{O}_{\Q(\sqrt{-d})}$ into $\mathbb{O}_{\rm Hurw}$ and the equivalence classes of these embeddings realize $\widetilde{\mathcal{R}}(d)$ as a disjoint union of either one or two principal homogeneous spaces under the action of the class group $\ClD$, according to whether $d\equiv 1,2\bmod 4$ or $d\equiv 3\bmod 8$. Similarly, if ${\rm Mat}_2$ denotes the split quaternion algebra over $\Q$, then any binary quadratic form $\phi$ of discriminant $-D$ determines an optimal embedding of $\mathscr{O}_{\Q(\sqrt{-d})}$ into the maximal order ${\rm Mat}_2(\Z)$. Equivalence classes of such embeddings realize $\mathcal{Q}_{-D}$ as a principal homogeneous space under the action of $\ClD$; see \cite{EMV} for more details. 

The map $\widetilde{\mathcal{R}}(d)\rightarrow\mathcal{Q}_{-D}$, obtained from ${\rm Orth}(d)$ by factorisation through the projection map $\widetilde{\mathcal{R}}(d)\rightarrow\mathcal{R}(d)$, is not equivariant with respect to the respective class group actions. We illustrate this in the case of $d\equiv 1,2\bmod 4$, where $\widetilde{\mathcal{R}}(d)$ is a single orbit under $\ClD$. Choose a basepoint $[x]\in \widetilde{\mathcal{R}}(d)$. Then, for any $[\mathfrak{a}]\in \ClD$ we have $[\phi_{[\mathfrak{a}].[x]}]=[\mathfrak{a}]^2.[\phi_x]$; see \cite[Section 4.2]{EMV}. Thus, for $d\equiv 1,2\bmod 4$ we have the parametrization
\begin{equation}\label{Gammad}
\Gamma_d=\left\{({\rm Arg}_{[\mathfrak{a}].[x]}, \tau_{[\mathfrak{a}]^2.[\phi_x]}) : [\mathfrak{a}] \in \ClD\right\}.
\end{equation}
Thus $\ClD$ acts ``with different speeds'' on the two components. The same is true for $d\equiv 3\bmod 8$, for each of the two class group orbits.

This interpretation in terms of class group actions allows us to formulate new, related problems, that lie outside of the scope of the Gau{\ss} orthogonal complement procedure.

\medskip

With this in mind, we consider a second concrete example involving two \textit{definite} quaternion algebras with an equivariant class group action. Besides $\textbf{B}_1 = \textbf{B}^{(2, \infty)}$ considered earlier, we now let $\textbf{B}_2 = \textbf{B}^{(5, \infty)}$ denote the unique quaternion algebra over $\Q$ ramified at $5$ and $\infty$. Both $\bB_1$ and $\bB_2$ are of class number one. Let $\mathbb{O}_1\subset\bB_1(\Q)$ be the Hurwitz quaternions and choose a maximal order $\mathbb{O}_2\subset\bB_2(\Q)$. When restricted to the trace zero elements of $\mathbb{O}_1, \mathbb{O}_2$ the two reduced norm forms are given by $f_1=(x_1^2 + x_2^2 + x_3^2,\Z^3)$ and $f_2=(2x_1^2 + 5x_2^2 + 10x_3^2,\Z^3)$, respectively. The class group $\ClD$ acts freely on $\widetilde{\mathcal{R}}_{f_i}(d)$ ($i=1,2)$. The orbit structure of $\widetilde{\mathcal{R}}_{f_1}(d)$ has already been discussed. For $\widetilde{\mathcal{R}}_{f_2}(d)$, there are 1, 2, or 4 orbits according to explicit congruence conditions on $d\bmod 40$ \cite[Theorem 4.2]{Sh}. 

\begin{theorem}\label{thm-sphere-ellipsoid}
Let $\mathbb{D}$ denote the set of positive squarefree numbers avoiding $3\bmod 4$ and $1,4\bmod 5$. For $d\in\mathbb{D}$ fix base points $[x_i] \in \widetilde{\mathcal{R}}_{f_i}(d)$ $(i=1,2)$.
The set
\[
\big\{({\rm Arg}_{[\mathfrak{a}].[x_1]}, {\rm Arg}_{[\mathfrak{a}].[x_2]}) : [\mathfrak{a}] \in \ClD \big\}
\]
equidistributes in $\mathcal{S}^2 \times \mathcal{E}^2$ according to the product measure $\mu_{\mathcal{S}^2}\otimes\mu_{\mathcal{E}^2}$, as $d\rightarrow\infty$ along $d \in \mathbb{D}$  for which $L(s,\chi_{-D})$ has no zeros in $|s-1|\leq \psi(D)/\log D$ for some $\psi(D)\rightarrow\infty$. 
\end{theorem}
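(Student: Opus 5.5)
By Weyl's criterion, it suffices to test against products $\phi_1\otimes\phi_2$. Here $\phi_1$ runs over an orthonormal basis of $\SO_3^+(\Z)$-invariant functions on $S^2$, i.e.\ Laplace eigenfunctions, equivalently automorphic forms on $\bB_1^\times$. Likewise $\phi_2$ runs over automorphic forms on $\bB_2^\times$ coming from $E^2$. Both spaces are compact, so there is no continuous spectrum; after removing constants each $\phi_i$ is either constant or cuspidal, apart from the finitely many one-dimensional characters, which are handled by the genus/orbit structure. If exactly one of the $\phi_i$ is nonconstant, we get the single Duke-type equidistribution (Duke, Golubeva--Fomenko, and \cite{DSP} for the ellipsoid). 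So the content is the joint Weyl sum
\[
W(\phi_1,\phi_2)=\frac{1}{h(-D)}\sum_{[\mathfrak a]\in \ClD}\phi_1({\rm Arg}_{[\mathfrak a].[x_1]})\,\phi_2({\rm Arg}_{[\mathfrak a].[x_2]}),
\]
with both $\phi_i$ cuspidal, and I would show $W\to 0$. This is the special case of Theorem \ref{thm11} with $\bB_1=\bB^{(2,\infty)}$, $\bB_2=\bB^{(5,\infty)}$, so the plan is the general one specialized.

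\textbf{Step 1 (spectral expansion over class group characters).} Expand in characters $\chi$ of $\ClD$:
\[
W=\frac{1}{h(-D)}\sum_{\chi}\mathcal P_{\phi_1}(\chi)\,\overline{\mathcal P_{\overline{\phi_2}}(\chi)},
\]
where $\mathcal P_{\phi}(\chi)=h^{-1/2}\sum_{\mathfrak a}\phi([\mathfrak a].x)\chi(\mathfrak a)$ are toric periods. By Cauchy--Schwarz and Waldspurger's formula in Gross/Zhang form, $|\mathcal P_{\phi_i}(\chi)|^2 \asymp L(1/2,\pi_i\times\theta_\chi)/\bigl(\sqrt D\,L(1,\chi_{-D})\,\cdots\bigr)$, up to local factors at $2$, $5$ and $\infty$. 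Here $\pi_i$ is the Jacquet--Langlands transfer of $\phi_i$ to $\GL_2$ of level $2$ or $5$.

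Cauchy--Schwarz alone only gives $O(1)$, because on average over $\chi$ the central values have size $\asymp \log$-free constants; this is Duke's bound squared. We need the two sequences $\chi\mapsto L(1/2,\pi_1\times\theta_\chi)^{1/2}$ and $\chi\mapsto L(1/2,\pi_2\times\theta_\chi)^{1/2}$ to be "decorrelated", which requires beating the trivial bound by $o(1)$ through their distributions.

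\textbf{Step 2 (mollification).} I would replace the $L^2$ bound by
\[
|W|\le \frac{1}{h}\sum_\chi |\mathcal P_1\mathcal P_2|,
\]
then insert a mollifier $M_i(\chi)$, a short Dirichlet polynomial in Hecke eigenvalues twisted by $\chi$ over split primes up to $D^{\delta}$. Using $|ab|\le \tfrac12(a^2|M|^{-2}\cdot|M_1M_2|+\dots)$ and Hölder, the sum becomes mollified first moments $\sum_\chi |\mathcal P_i(\chi)|^2|M_i(\chi)|^2$, which are computable via amplified period formulas and the spectral theory of $\phi_i\cdot$(Hecke operators). This yields a bound of size $\exp\bigl(-\tfrac12\sum_{p\le D^\delta,\ \chi_{-D}(p)=1}\frac{(\lambda_1(p)-\lambda_2(p))^2}{p}\bigr)$ times $O(1)$. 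Equivalently, the expected size of $\sqrt{L_1L_2}$ is smaller than $\sqrt{\mathbb E L_1\,\mathbb E L_2}$ by a Gaussian-type decay. Since $\pi_1\not\cong\pi_2$, Rankin--Selberg gives $\sum_{p\le x}(\lambda_1(p)-\lambda_2(p))^2/p\sim 2\log\log x$. This holds in particular because they have different levels $2$ and $5$, so they are distinct even when both come from the same eigenform weight.

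\textbf{Step 3 (split primes) — main obstacle.} The sum in Step 2 runs only over split primes, so I need $\sum_{p\le D^\delta,\,\chi_{-D}(p)=1}(\lambda_1-\lambda_2)^2(p)/p\to\infty$. This is exactly where the zero-free hypothesis enters. The condition that $L(s,\chi_{-D})$ has no zeros in $|s-1|\le\psi(D)/\log D$ gives, via the explicit formula, $\sum_{y<p\le D^\delta}\chi_{-D}(p)\,a(p)/p = o(\log\log)$-type bounds, with $y=\exp(\log D/\psi(D))$, for $a(p)=(\lambda_1-\lambda_2)^2(p)$. This uses the smooth approximation of $a(p)$ by Rankin--Selberg coefficients and the factorization $L(s,\pi\times\pi'\otimes\chi_{-D})$. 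So roughly half the primes contribute, giving divergence like $\log\psi(D)\to\infty$.

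The hard part is making the mollified moments in Step 2 rigorous with a mollifier long enough to reach primes up to $D^\delta$, uniformly in the spectral parameters of $\phi_i$. Equivalently, this means controlling off-diagonal terms in $\sum_\chi \mathcal P\,\overline{\mathcal P}\,\chi(\mathfrak n)$, which are Hecke-translated toric periods. Here I would invoke the period/Hecke equidistribution input (Duke's subconvex bound, uniformly with Hecke shifts) to handle $\mathfrak n$ up to $D^{\delta}$ with small $\delta$. Combining Steps 1–3 gives $W(\phi_1,\phi_2)\ll \psi(D)^{-c}\to 0$, proving Theorem \ref{thm-sphere-ellipsoid}.
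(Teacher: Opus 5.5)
Your Steps 1--2 follow the paper's architecture: Parseval over $\ClD^\wedge$, a Radziwi\l\l--Soundararajan type mollifier built from $\mathcal{Q}(\chi)=\frac12\sum_{\mathfrak p}(\lambda_{\pi_1}-\lambda_{\pi_2})({\rm N}\mathfrak p)\chi(\mathfrak p){\rm N}\mathfrak p^{-1/2}$, a weighted Cauchy--Schwarz, and evaluation of the mollified first moments through the twisted first moment of Theorem \ref{thm12}, which the paper proves at the level of periods. However, your ``$|ab|\le\frac12(\dots)$ and H\"older'' step omits the property that makes this work. You need non-negative short Dirichlet polynomials $w_1(\chi),w_2(\chi)$ with $w_1(\chi)w_2(\chi)\ge 1$ for \emph{every} $\chi$. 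A generic short approximation to $\exp(\pm\mathcal{Q}(\chi))$ need not satisfy this. The paper gets it from products of truncated exponentials of even degree, using $E_\ell(x)E_\ell(-x)\ge1$ (Lemma \ref{lemma:e-E}). These are taken over a multi-scale splitting of the prime range, so that the total length stays below $D^{2\sqrt c}$ while Rankin's trick controls the truncation tails.

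The genuine gap is Step 3. To show that split primes carry about half of $\sum_p(\lambda_{\pi_1}(p)-\lambda_{\pi_2}(p))^2/p$ on $[D^{1/\psi(D)},D^c]$, you need cancellation in $\sum_p\chi_{-D}(p)(\lambda_{\pi_1}(p)-\lambda_{\pi_2}(p))^2/p$. Expanding, $(\lambda_{\pi_1}-\lambda_{\pi_2})^2(p)=2+\lambda_{{\rm sym}^2\pi_1}(p)+\lambda_{{\rm sym}^2\pi_2}(p)-2\lambda_{\pi_1}(p)\lambda_{\pi_2}(p)$. Only the constant term is governed by $L(s,\chi_{-D})$. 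The rest is governed by zeros of $L(s,{\rm sym}^2\pi_i\otimes\chi_{-D})$ and $L(s,\pi_1\times\pi_2\otimes\chi_{-D})$, which would need a zero-free region near $s=1$ at least as strong ($\gg\psi(D)/\log D$) as the one assumed for $L(s,\chi_{-D})$. That is neither known nor implied by the hypothesis. The factorization $L(s,\pi\times\pi\otimes\chi_{-D})=L(s,\chi_{-D})L(s,{\rm sym}^2\pi\otimes\chi_{-D})$ only isolates the one factor you control. So nothing in your argument excludes a conspiracy in which small split primes concentrate where $\lambda_{\pi_1}(p)\approx\lambda_{\pi_2}(p)$.

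The paper avoids all twisted information by decoupling the two conditions and using pigeonhole:
\begin{enumerate}
\item[(i)] Primes with $|\lambda_{\pi_1}(p)-\lambda_{\pi_2}(p)|\ge\varepsilon$ have density strictly above $1/2$ (Theorem \ref{thm15}, via ${\rm sym}^{2,3,4}$ functoriality and the polynomial of Lemma \ref{lem:poly}). In the present definite case the $\pi_i$ are holomorphic, non-CM (Steinberg at $2$, resp.\ $5$) and not twists of each other, so joint Sato--Tate would also do.
\item[(ii)] Split primes have density $\ge 1/2-o(1)$ in each $[X,X^2]$ with $X\ge D^{1/\psi(D)}$, using only zeros of $L(s,\chi_{-D})$ (Theorem \ref{thm14}).
\item[(iii)] Since the two densities sum to more than $1$, inclusion--exclusion gives a positive proportion of overlap in each $[X,X^2]$. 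Summing over $X=D^{2^k/\psi(D)}$ yields $S_D\gg\log\psi(D)$.
\end{enumerate}
Even (ii) is not a routine explicit-formula computation. With the a priori bound $L'/L\ll\log D$, a contour shift only works when $\psi(D)\gg\log\log D$. The paper needs the finer estimate on $\log L$ near the $1$-line (Lemma \ref{le:main}) to let $\psi(D)\to\infty$ arbitrarily slowly.
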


\begin{remark}\label{rem:variants} The settings of Theorems \ref{thm-AES-intro} and \ref{thm-sphere-ellipsoid} can be mixed. For instance, just as in \eqref{Gammad}, we can allow in Theorem \ref{thm-sphere-ellipsoid} that the class group acts on one component with ``double speed'' and prove an analogous equidistribution statement for 
\[
\big\{({\rm Arg}_{[\mathfrak{a}].[x_1]}, {\rm Arg}_{[\mathfrak{a}]^2.[x_2]}) : [\mathfrak{a}] \in \ClD \big\}.
\]
In fact, in this case we can also allow self-products (where one takes the same ternary quadratic form) inside $\mathcal{S}^2\times\mathcal{S}^2$ or $\mathcal{E}^2\times\mathcal{E}^2$, since the square on the second factor guarantees escape from the diagonal. 

Conversely, we can also obtain an equidistribution statement in a set-up where the Gau{\ss} orthogonal complement construction \eqref{Gammad} is replaced with its equivariant sibling
\[
\big\{({\rm Arg}_{[\mathfrak{a}].[x]} , \tau_{[\mathfrak{a}].[\phi]} ) : [\mathfrak{a}] \in \ClD\big\} \subseteq \mathcal{S}^2 \times Y_0(1)
\]
for base points $[x]\in\widetilde{\mathcal{R}}(d)$ and $[\phi]\in \mathcal{Q}_{-D}$. 
\end{remark}

\subsection{General context}\label{secgeneral}
 
We now turn to the general setting of joinings of two distinct Linnik problems and state our main results in full generality. We will adopt the language of algebraic groups, which has the advantage of covering all cases in a unified fashion and can be specialized to reproduce classical situations of particular arithmetic interest. Several concrete examples in classical language are given in \cite[Section 3]{BB}. In addition to results of the type presented in Theorems \ref{thm-AES-intro} and \ref{thm-sphere-ellipsoid}, they also include the equidistribution of simultaneous supersingular reduction of CM elliptic curves, as in \cite{ALMW}.
 
For $i=1,2$, let $\bB_i$ be a quaternion algebra over $\Q$ and write $\bG_i=\bPB_i^\times$. Let ${\rm Ram}(\bB_i)$ denote the set of primes at which $\bB_i$ is ramified. Let $\mathbb{O}_i$ be a maximal order in $\bB_i(\Q)$. Write $\bG_i(\Z_p)$ for the image of $(\mathbb{O}_i\otimes \Z_p)^\times$ inside $\bG_i(\Q_p)$. Then $\bG_i(\Z_p)$ is a maximal compact subgroup of $\bG_i(\Q_p)$ whenever $p\notin {\rm Ram}(\bB_i)$ and an index $2$ subgroup of the compact group $\bG_i(\Q_p)$ whenever $p\in {\rm Ram}(\bB_i)$. We put $\bG_i(\widehat{\Z})=\prod_p \bG_i(\Z_p)$ and let $K_{\infty,i}\subset \bG_i(\R)$ be a maximal compact torus. We may then define the quaternionic variety $Y_i=\bG_i(\Q)\backslash\bG_i(\A)/K_i$, where $K_i=\bG_i(\widehat{\Z})K_{\infty,i}$. The subgroup $K_i$ is the ``almost maximal" level structure referred to in Theorem \ref{thm:vague}. Specializing this to $\bB_i=\bB^{(2,\infty)}$, $\bB^{(5,\infty)}$, or ${\rm Mat}_2$ yields $Y_i=\mathcal{S}^2$, $\mathcal{E}^2$, or $Y_0(1)$, respectively. We equip $Y_i$ with the pushforward measure $\mu_i$ of the unique probability $\bG_i(\A)$-invariant Borel measure on $\bG_i(\Q)\backslash \G_i(\A)$.

Let $E$ be an imaginary quadratic field extension of $\Q$. Let $E\hookrightarrow\bB_i$ be an embedding of $\Q$-algebras and write $\iota_i:{\rm Res}_{E/\Q}\mathbb{G}_m/\mathbb{G}_m\hookrightarrow\bG_i$ for the induced embedding and $\bT_i$ for its image. Let $g_{\infty,i}\in\bG_i(\R)$ such that $g_{\infty,i}\bT_i(\R)g_{\infty,i}^{-1}=K_{\infty,j}$. Then $\iota_i$ and $g_i=(g_{f,i},g_{\infty,i})\in\bG_i(\A)$ together define the Heegner datum $\mathscr{D}_i=(\iota_i,g_i)$. Following \cite[Section 3.3]{BBK}, we define the Heegner packet $H_{\mathscr{D}_i}$ as the image of $\bT_i(\Q)\backslash\bT_i(\A)g_i$ in $Y_i$. Define a quadratic order in $\iota(E)\simeq E$ by $\mathscr{O}_{\mathscr{D}_i}=\iota_i(E)\cap \mathbb{O}^{g_i}$, where $\mathbb{O}^{g_i}=\bB(\Q)\cap g_i\widehat{\mathbb{O}}_ig_i^{-1}$. Then $H_{\mathscr{D}_i}$ is a torsor for the Picard group ${\rm Pic}(\mathscr{O}_{\mathscr{D}_i})$ \cite[Section 3.4]{BBK}. We fix a base point $x_i\in H_{\mathscr{D}_i}$, so that $H_{\mathscr{D}_i}=\{[\mathfrak{a}].x_i: [\mathfrak{a}]\in {\rm Pic}(\mathscr{O}_{\mathscr{D}_i})\}$. Henceforth we shall assume that $\mathscr{O}_{\mathscr{D}_1}=\mathscr{O}_{\mathscr{D}_2}=\mathscr{O}_E$. Write $D=|{\rm disc}(\mathscr{O}_E)|$ and $\ClD={\rm Pic}(\mathscr{O}_E)$. 

For $\nu=1,2$, we call the projection to $Y_1\times Y_2$ of $\{(\iota_1(t)g_1, \iota_2(t)^\nu g_2): t\in \A_E^\times/\A^\times\}$ the \textit{$\nu$-diagonal Heegner packet} and denote it by $H_{\mathscr{D}_1,\mathscr{D}_2}^{\Delta_\nu}$. Then by \cite[(4.8)]{BBK} for any $x_i\in H_{\mathscr{D}_i}$ we have
\[
H_{\mathscr{D}_1,\mathscr{D}_2}^{\Delta_\nu}=\{ ( [\mathfrak{a}].x_1, [\mathfrak{a}]^\nu. x_2): [\mathfrak{a}]\in \ClD\}\subset Y_1\times Y_2.
\]
Correspondingly, for a function $\varphi\in C^\infty(Y_1\times Y_2)$ we define 
\[
P_{\mathscr{D}_1,\mathscr{D}_2}^{\Delta_\nu}(\varphi)=\frac{1}{|\ClD|}\sum_{[\mathfrak{a}]\in \ClD} \varphi([\mathfrak{a}].x_1,[\mathfrak{a}]^\nu.x_2).
\]
When the case $\nu=1$ is considered separately, we shall often drop the subscript from $\Delta_1$, writing $H_{\mathscr{D}_1,\mathscr{D}_2}^\Delta$ and $P_{\mathscr{D}_1,\mathscr{D}_2}^\Delta$. The dependency of the quantities $H_{\mathscr{D}_1,\mathscr{D}_2}^{\Delta_\nu}$ and $P_{\mathscr{D}_1,\mathscr{D}_2}^{\Delta_\nu}(\varphi)$ on the choice of base points $x_1,x_2$ has been suppressed from the notation, as all estimates we prove in this paper are uniform in them.

The following is our main theorem, from which Theorems \ref{thm-AES-intro} and \ref{thm-sphere-ellipsoid} and the variants sketched in Remarks \ref{rem22} and \ref{rem:variants}, can be obtained as special cases. 

\begin{theorem}\label{thm11} If $\nu = 1$, assume that $\bB_1$ and $\bB_2$ are non-isomorphic, and if $\nu=2$ assume that $\bB_1$ is non-split.
Let $1 \leq \psi(D) \leq o(\log D)$ be a function tending to infinity.

Then the $\nu$-diagonal Heegner packets $H_{\mathscr{D}_1,\mathscr{D}_2}^{\Delta_\nu}$
equidistribute on $Y_1\times Y_2$ relative to $\mu_1\times\mu_2$ as $D\rightarrow\infty$ along subsequences for which $L(s, \chi_{-D})$ has no zero in the region 
\begin{equation}\label{zero-freePsi}
\Re s \geq 1 - \frac{20 \psi(D) \log \psi(D)}{\log D}, \quad  |\Im s| \leq \frac{2\psi(D)^2}{\log D}.
\end{equation}
More precisely, there is $\rho > 0$ such that, for every 
$\varphi\in C^\infty_b(Y_1\times Y_2)$ and a negative fundamental discriminant $-D$ as above, we have
\[
P_{\mathscr{D}_1,\mathscr{D}_2}^{\Delta_\nu}(\varphi)=\int_{Y_1\times Y_2}\varphi \, {\rm d}(\mu_1\times\mu_2)+O_{\varphi,\bB_1,\bB_2}({\rm err}_\nu(\psi(D))),
\]
where
\begin{equation}\label{error}
{\rm err}_\nu(\psi(D))=\begin{cases} \psi(D)^{-\rho},& \nu=1;\\
 (\log \psi(D))^{-1/2},&\nu=2.
\end{cases}
\end{equation}
\end{theorem}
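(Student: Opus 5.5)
We reduce to Weyl sums by spectral decomposition. We expand $\varphi$ on $Y_1\times Y_2$ into products $\varphi_1\otimes\varphi_2$ of Hecke eigenfunctions: constants, cusp forms, and, if some $\bB_i$ is split, Eisenstein series. Products with both factors constant give the main term. Products with exactly one nonconstant factor are handled by Duke's theorem, with a power saving. Everything therefore reduces to bounding the mixed Weyl sums
\[
W(\varphi_1,\varphi_2)=\frac{1}{|\ClD|}\sum_{[\mathfrak a]}\varphi_1([\mathfrak a].x_1)\varphi_2([\mathfrak a]^\nu.x_2).
\]
We expand this by orthogonality of class group characters $\chi\in\widehat{\ClD}$:
\[
W=\sum_{\chi}P_1(\varphi_1,\chi)\,\overline{P_2(\varphi_2,\chi^{\nu})},
\]
where $P_i(\varphi_i,\chi)$ is a twisted toric period. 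Waldspurger's formula expresses $|P_i(\varphi_i,\chi)|^2$ in terms of $L(1/2,\pi_i\times\theta_\chi)/(\sqrt D\,L(1,\chi_{-D})\cdots)$. For Eisenstein $\varphi_i$ we use the Hecke formula instead, which gives $|L(1/2+it,\chi)|^2$. By Cauchy--Schwarz,
\[
|W|\le\Big(\sum_\chi |P_1(\varphi_1,\chi)|^2 w_\chi\Big)^{1/2}\Big(\sum_\chi|P_2(\varphi_2,\chi^\nu)|^2w_\chi^{-1}\Big)^{1/2}.
\]
Trivially this gives only $O(1)$. So the gain must come from decorrelating the two $L$-values: the families $\{L(1/2,\pi_1\times\theta_\chi)\}$ and $\{L(1/2,\pi_2\times\theta_{\chi^\nu})\}$ should rarely both be large.

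The mechanism is mollification by short Euler products over split primes $p\le D^{\delta}$. For split $p=\mathfrak p\bar{\mathfrak p}$, the Hecke eigenvalue of $\varphi_i$ at $p$ corresponds to the sum over $[\mathfrak p]$ acting on the packet. We write $\lambda_{\pi_i}(p)\approx\chi(\mathfrak p)+\chi(\bar{\mathfrak p})$ in the local factor.

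We introduce weights $w_\chi=\prod_{p}$ (mollifier-type factors), i.e. $M_1(\chi)$ approximating $L(1/2,\pi_1\times\theta_\chi)^{-1/2}$ times $L(1/2,\pi_2\times\theta_{\chi^\nu})^{1/2}$ restricted to primes in $[\psi,\exp(\log D/\psi)]$, say. The twisted first moments
\[
\sum_\chi|P_1(\varphi_1,\chi)|^2|M(\chi)|^2
\]
are evaluated by unfolding back to the packet. Here $|M(\chi)|^2$ is a combination of Hecke operators $T_{\mathfrak a}$ with $N\mathfrak a$ small. By the equidistribution (Duke, with power saving) of Hecke-translated packets, this equals the spectral average $\langle T\varphi_1,\varphi_1\rangle$ plus $O(D^{-\eta})$.

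Computing the main term as an Euler product gives
\[
\prod_{p\ \mathrm{split}}\Big(1-\frac{c\,|\lambda_{\pi_1}(p)-\lambda_{\pi_2}(p)|^2}{p}+\dots\Big),
\]
with the analogous expression for $\nu=2$ involving $\lambda(p^2)$ versus $\lambda(p)^2$. The non-isomorphism hypothesis, via Jacquet--Langlands and strong multiplicity one, makes $\sum_{p}|\lambda_{\pi_1}(p)-\lambda_{\pi_2}(p)|^2/p$ diverge. For $\nu=2$ one uses $\lambda(p^2)\neq\lambda(p)^2$ on average, with Sato--Tate-free second-moment bounds, and non-split $\bB_1$ rules out the degenerate (Eisenstein/dihedral) cases.

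The split primes give a product over $p\le\exp(\log D/\psi)$ restricted to $\chi_{-D}(p)=1$. The zero-free region \eqref{zero-freePsi} guarantees
\[
\sum_{\psi^{C}\le p\le z,\ \chi_{-D}(p)=1}\frac1p\ge(\tfrac12-o(1))\log\frac{\log z}{\log\psi}.
\]
This is shown by a standard explicit-formula argument for $L(s,\chi_{-D})$ near $s=1$. It yields a saving of $\psi^{-\rho}$ in the $\nu=1$ case. For $\nu=2$ only a $\log$-weaker, variance-type saving $(\log\psi)^{-1/2}$ is obtained.

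The main obstacle will be the mollified moment with the Eisenstein/continuous spectrum. There Duke-type equidistribution of Hecke translates must be uniform in $N\mathfrak a\le D^{\delta}$ and in the spectral parameter. We also need the uniform control of $L(1,\chi_{-D})$ that the zero-free region provides. We would first try to deduce the needed uniformity from the subconvex bounds behind Duke's theorem, with polynomial dependence on the Hecke operator's norm.
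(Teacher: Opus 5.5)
\paragraph{Verdict.} There is a genuine gap: the proposal never shows why the saving actually tends to zero, and the $\nu=2$ scheme fails as stated.

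\paragraph{The missing step for $\nu=1$.} Your analytic reduction for $\nu=1$ matches the paper's Steps 1--2 in outline: Parseval, a weighted Cauchy--Schwarz, and mollified moments evaluated via a power-saving asymptotic for Hecke-shifted packets that is polynomial in the norm of the shift. What is missing is the argument that makes the saving tend to zero. You need $S_D(\pi_1,\pi_2)$ and $T_D(\pi_1)$, which are sums over \emph{split} primes only, to grow. This does not follow from strong multiplicity one plus ``split primes have density $\tfrac12$''. Rankin--Selberg controls $(\lambda_{\pi_1}(p)-\lambda_{\pi_2}(p))^2$ only on average over all primes. Nothing prevents the primes where $\lambda_{\pi_1}(p)\approx\lambda_{\pi_2}(p)$ from largely coinciding with those where $\chi_{-D}(p)=1$. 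The hypothesis concerns only $L(s,\chi_{-D})$, not twists like $L(s,\pi_1\times\pi_2\otimes\chi_{-D})$, so no independence is available.

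The paper resolves this by pigeonhole. Theorem~\ref{thm15} gives $|\lambda_{\pi_1}(p)-\lambda_{\pi_2}(p)|\ge\varepsilon$ (resp.\ $|\lambda_{\pi_1}(p)|\ge\varepsilon$) on a set of density at least $0.508$ (resp.\ $0.58$), strictly above $\tfrac12$. That set must therefore overlap the split primes, whose density is at least $0.4999$ by Theorem~\ref{thm14}. Beating $\tfrac12$ is the delicate part. Second and fourth moments give only $\tfrac25$, and Remark~\ref{rem:S-T} shows the density can indeed be $\le\tfrac12$ without the full symmetric-power hypotheses. One needs cuspidality and distinctness of $\mathrm{sym}^k\pi_i$ for $k\le4$ (Lemma~\ref{lem22}, via the twisted Steinberg components at $\mathrm{Ram}(\bB_i)$ and the conductors; this is where the hypotheses on $\bB_1,\bB_2$ really enter), plus the explicit polynomial of Lemma~\ref{lem:poly}.

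\paragraph{The prime range.} Your range of primes is also wrong. The condition \eqref{zero-freePsi} says nothing about $\chi_{-D}(p)$ for $p<D^{1/\psi(D)}$; such primes may be mostly inert, e.g.\ if there is a real zero just outside the region. The usable range is $[D^{1/\psi(D)},D^c]$, of $\log\log$-length $\asymp\log\psi(D)$, and that is what produces $\psi(D)^{-\rho}$. Your displayed bound on $[\psi^C,D^{1/\psi}]$ would give a power-of-$\log D$ saving, inconsistent with your own conclusion. Even in the correct range, a standard explicit-formula argument loses a factor $\log D$ and needs a zero-free width $\gg\log\log D/\log D$. Theorem~\ref{zero-theorem} is what allows $\psi\to\infty$ arbitrarily slowly.

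\paragraph{The case $\nu=2$.} Here your scheme breaks down. Parseval puts the power on the first factor,
$P^{\Delta_2}_{\mathscr D_1,\mathscr D_2}(f_1\otimes f_2)=\sum_\chi W_{\mathscr D_1}(f_1;\chi^2)W_{\mathscr D_2}(f_2;\overline\chi)$,
which is why non-splitness of $\bB_1$ is the relevant hypothesis. A weight approximating a $\chi$-twisted $L$-value contains $\chi(\mathfrak p)$. In the factor carrying $|W_{\mathscr D_1}(f_1;\chi^2)|^2$ it produces $\sum_\chi|W_{\mathscr D_1}(f_1;\chi^2)|^2\chi(\mathfrak n)$. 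This unfolds to a correlation along a square root of $[\mathfrak n]$, which in general has no small-norm representative, so it is out of reach without GRH. Moreover, ``$\lambda(p^2)$ versus $\lambda(p)^2$'' is not a mechanism, since $\lambda_\pi(p^2)=\lambda_\pi(p)^2-1$ identically at unramified $p$.

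The paper instead uses $\mathcal Q(\chi^2)$, built from $\lambda_{\pi_1}$ alone and depending only on $\chi^2$. It conditions on the typical set $\mathcal X$ via Chebyshev (Lemma~\ref{lem71}) and Markov (Lemma~\ref{lem71bis}), and averages over quadratic characters to compensate for $|\ClD[2]|$. The resulting saving $(1+T_D(\pi_1))^{-1/2}$ again needs part (2) of Theorem~\ref{thm15}.

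\paragraph{Smaller points.} First, $w_\chi$ and $w_\chi^{-1}$ cannot both be short Dirichlet polynomials. The paper uses truncated exponentials $E_\ell(\pm\mathcal Q_j(\chi))$ with $\ell$ even, whose product is $\ge1$ (Lemma~\ref{lemma:e-E}), applied at several scales. Second, in the continuous spectrum you must test against incomplete Eisenstein series, since twisted moments of spectral ones grow with $D$. Their coefficients are not multiplicative, so the mollifier must be built from $\pi_1$ alone, with Lemma~\ref{calpha} controlling the $t$-integral.
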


\begin{remark}
Our argument works for any $\nu\geq 2$ for which the $\nu$-torsion subgroup satisfies $|\ClD[\nu]|\ll D^{\varepsilon_0}$, for a sufficiently small (but fixed) $\varepsilon_0>0$. This is quite surprising as the final saving in \eqref{error} is at best a fractional power of $\log\log D$. The rate of convergence in the case $\nu = 2$ can be certainly improved, perhaps also to $\psi(D)^{- \rho}$, we do not pursue this in the current paper. 
\end{remark}

From the finiteness of the class number of $\bB_1$ and $\bB_2$, we can and will assume that $g_f=e$. In that case, since $D$ is fundamental, the embeddings $E\hookrightarrow \bB_i(\Q)$ are optimal. For these facts see \cite[Sections 3.3, 4.5.1]{BBK}.

\subsection{Strategy, outline, and auxiliary results}\label{sec:strategy}
The proof of Theorem \ref{thm11} passes through an appeal to the Weyl equidistribution criterion, according to which it is enough to sample equidistribution from two automorphic forms $f_1\in\sigma_1$ on $\bPB_1^\times$ and $f_2\in\sigma_2$ on $\bPB_2^\times$.

In view of the hypotheses of Theorem \ref{thm11}, we may assume, without loss of generality, that $\bB_1$ is not split. If, however, $\bB_2$ is the matrix algebra, as in the Gau{\ss} orthogonal complement procedure of Theorem \ref{thm-AES-intro}, then we must allow for $f_2$ to lie in the continuous spectrum. This is a delicate issue, since the results of \cite{BB} do not (and cannot) apply if $f_2$ is taken to be the spectral Eisenstein series. For this reason, we shall take $f_2$ to be an \textit{incomplete} Eisenstein series, which features an additional continuous average, but whose Fourier coefficients are no longer multiplicative.

We now divide the argument into three steps. Steps 1 and 2 prove Theorem \ref{thm:unconditional}, which provides an unconditional bound on $P_{\mathscr{D}_1,\mathscr{D}_2}^{\Delta_\nu}(f_1\otimes f_2)$ in terms of short arithmetic expressions involving the Hecke eigenvalues of $f_1, f_2$ and the splitting behavior of primes in the extension $\Q(\sqrt{-d})$. Step 3 then deduces Theorem \ref{thm11} from Theorem \ref{thm:unconditional}, using the assumption on Siegel zeros. We describe these steps in more detail below.

\medskip

-- {\it Step 1: Sections \ref{sec:mollification}-\ref{sec-off}.} The key to proving Theorem \ref{thm:unconditional} is to reduce it to instances of the \emph{mixing conjecture} of Michel--Venkatesh, and its twisted variants, for low-complexity parameter ranges. We do this via two different mollification schemes, tailored respectively to the equivariant setting of Theorem \ref{thm-sphere-ellipsoid} and the off-speed setting of Theorem \ref{thm-AES-intro}. The corresponding arguments are developed in Sections \ref{sec:mollification} and \ref{sec-off}. 

In the equivariant setting, where we may suppose $\sigma_1\neq\sigma_2$, we use a mollification procedure introduced in \cite{RS} to exploit the statistical independence of the $\chi$-twisted Heegner periods
\[
\chi\mapsto \frac{1}{|\ClD|}\sum_{[\mathfrak{a}]\in \ClD} f_1([\mathfrak{a}].x)\chi(\mathfrak{a})\qquad\textrm{and}\qquad \chi\mapsto \frac{1}{|\ClD|}\sum_{[\mathfrak{a}]\in \ClD} f_2([\mathfrak{a}].x)\chi(\mathfrak{a}),
\]
where $\chi$ ranges over the group of class group characters $\ClD^\wedge$. The mollifier is constructed as a truncated exponential of prime-ideal sums, with a truncation parameter depending on the ideal norm. It is symmetric in $f_1$ and $f_2$ when both are cuspidal. If $f_2$ is an incomplete Eisenstein series, we introduce a new asymmetric scheme based on the information from the cusp form $f_1$ alone, as a way of counteracting the lack of multiplicative structure of its Fourier coefficients. In either case, this strategy leads to bounds for $P_{\mathscr{D}_1,\mathscr{D}_2}^{\Delta_\nu}(f_1\otimes f_2)$ in terms of short Euler products.  
  
In the off-speed setting we introduce a completely new technique to exploit the statistical independence of the skew-frequency Heegner periods
\[
\chi\mapsto \frac{1}{|\ClD|}\sum_{[\mathfrak{a}]\in \ClD} f_1([\mathfrak{a}].x)\chi^2(\mathfrak{a})\qquad\textrm{and}\qquad \chi\mapsto \frac{1}{|\ClD|}\sum_{[\mathfrak{a}]\in \ClD} f_2([\mathfrak{a}].x)\chi(\mathfrak{a}),
\]
\textit{whether or not} $\sigma_1=\sigma_2$. We regard this as one of the most innovative parts of our paper. Inprired by the Tur\'an-Kubilius inequality, our approach applies the Cauchy--Schwarz inequality after conditioning on the typical behavior of the above two Heegner periods. Here our mollifier is a Dirichlet polynomial over prime ideals --- with no truncated exponential --- and yields bounds for $P_{\mathscr{D}_1,\mathscr{D}_2}^{\Delta_\nu}(f_1\otimes f_2)$ in terms of short Dirichlet polynomials (rather than short Euler products). We fortify the argument by an additional average over quadratic class group characters $\psi$ to overcome the lack of surjectivity of the map $\chi \mapsto \chi^2$ on $\ClD^\wedge$. 

\medskip

{\it Step 2: Section \ref{twistedmoments}.} By Step 1, the proof of Theorem \ref{thm:unconditional} is now reduced to estimating the correlation sum
\[
\frac{1}{|\ClD|}\sum_{[\mathfrak{a}]\in \ClD} f_i([\mathfrak{a}].x)\bar{f_i}([\mathfrak{n}][\mathfrak{a}].x)\qquad (i=1,2),
\]
for integral ideals $\mathfrak{n}\subset\mathscr{O}_{\Q(\sqrt{-d})}$ of small norm relative to $D$.  Theorem \ref{thm12}, which we prove in Section \ref{twistedmoments}, provides uniform asymptotics for such sums, with a power-savings error term. In fact, we treat slightly more general correlation sums, which include twists by $\psi([\mathfrak{a}])$, for $\psi\in \ClD^\wedge$ a fixed class group character. 

We may express Theorem \ref{thm12} as a bound on a twisted average of toric periods, as in \eqref{cusp} below. It is important to note, however, that the relation of this twisted average to a fractional moment of central $L$-values, via Waldspurger's theorem, is not relevant for this step in the argument. Indeed, the spectral expansion which drives the proof is most naturally expressed at the level of periods. Only after the spectral expansion do we estimate the resulting periods by an appeal to hybrid subconvexity bounds for $L$-functions proved in \cite{HM} and \cite{DFI3}. It is nevertheless of independent interest to reinterpret Theorem \ref{thm12} as a result on moments of $L$-functions. As such it has the shape of a new spectral reciprocity formula, namely a non-split version of the Motohashi formula. We refer to Remark \ref{reciprocity} for more details. 

\medskip

-- {\it Step 3: Sections \ref{sec2}-\ref{finalsec}.} The third and final step is to show that the short arithmetic expressions produced in Theorem  \ref{thm:unconditional} tend to zero as $D\rightarrow\infty$. We use known cases of functoriality and analytic number theory to show that, under the hypotheses of Theorem \ref{thm11}, such arithmetic expressions vanish asymptotically. We do so by an approximation to the Sato--Tate law for the distribution of Hecke eigenvalues based on known cases of functoriality (Theorem \ref{thm15}) and a precise dictionary between zeros of quadratic Dirichlet $L$-functions and sums over primes (Theorem \ref{thm14}). The ultimate success of this step relies on some extremely tight linear programming the outcome of which could not have been foreseen; see Remark \ref{rem:S-T}. The proof of the numerically critical Lemma \ref{lem:poly} has been formalized in Lean4.

\section{Reduction to number theoretic statements}\label{sec:aux-results}

In this section, we reduce the proof of Theorem \ref{thm11} to a collection of auxiliary number theoretic results that constitute the three steps described in Section \ref{sec:strategy}. The remainder of the paper is then dedicated to the proofs of these auxiliary results, which should be of independent interest.

\subsection{Weyl criterion}\label{seceisen}

Theorem \ref{thm11} states that the measure $\varphi\mapsto P_{\mathscr{D}_1,\mathscr{D}_2}^{\Delta_\nu}(\varphi)$ weak-* converges to the uniform probability measure $\mu_1\times \mu_2$ on $Y_1\times Y_2$, with an effective rate of convergence. By the Weyl criterion for equidistribution, it suffices to bound $P_{\mathscr{D}_1,\mathscr{D}_2}^{\Delta_\nu}(\varphi)$ for a spanning set of functions $\varphi=f_1\otimes f_2\in L^2(Y_1\times Y_2)$, provided one has an additional polynomial uniformity in the spectral parameters $\lambda_1,\lambda_2$ of $f_1,f_2$. See \cite[Section 6.2]{BBK} for the definition of spectral parameter in this setting.

By definition, the discrete spectrum $L^2_{\rm disc}(Y_1\times Y_2)$ decomposes as 
\[
L^2_{\rm disc}(Y_1\times Y_2)=\bigoplus_{\sigma_i\subset L^2(\bG_i(\Q)\backslash\bG_i(\A))}\sigma_1^{K_1}\otimes \sigma_2^{K_2}.
\]
Since $K_i$ is almost maximal, we have $\dim \sigma_1^{K_2}\dim \sigma_2^{K_2}\leq 1$. In the case of equality we choose an $L^2$-normalized $\varphi=f_1\otimes f_2\in \sigma_1^{K_2}\otimes\sigma_2^{K_2}$. We must show, under the assumptions of Theorem \ref{thm11}, that
\[
P_{\mathscr{D}_1,\mathscr{D}_2}^{\Delta_\nu}(f_1\otimes f_2)=\mu_1(f_1)\mu_2(f_2)+O_{\bB_1,\bB_2}\left((1+\max_{i=1,2} \{|\lambda_i|\})^A{\rm err}_\nu(\psi(D))\right),
\]
for some constant $A>0$. We shall generally suppress the polynomial dependence on the spectral parameters in our estimates.

If $\sigma_1=\sigma_2=\C_{\rm triv}$, the trivial representation, there is nothing to show. If only one of $\sigma_1$ or $\sigma_2$ is $\C_{\rm triv}$, then the desired convergence follows from Duke's equidistribution theorem, as recalled in the introduction. For the discrete part of the spectrum we are therefore reduced to showing that $P_{\mathscr{D}_1,\mathscr{D}_2}^{\Delta_\nu}(f_1\otimes f_2)=O({\rm err}_\nu(\psi(D)))$, for $L^2$-normalized $K_i$-invariant functions $f_i$ lying in infinite dimensional unitary representations $\sigma_i\subset L^2(\bG_i(\Q)\backslash\bG_i(\A))$.

It remains to treat the case when $\bB_2={\rm Mat}_2(\R)$, for which
\[
Y_2=\mathbf{PGL}_2(\Q)\backslash\mathbf{PGL}_2(\A)/\mathbf{PGL}_2(\widehat{\Z}){\rm PO}(2)=Y_0(1).
\]
Let $E(z,s)=\sum_{\gamma\in \Gamma_\infty\backslash \SL_2(\Z)} y(\gamma.z)^s$ be the  spectral Eisenstein series. Let $\Psi$ be a smooth function of compact support in $\R_{>0}$ with $\int_0^{\infty} \Psi(y) dy = 0$ and Mellin transform $\hat{\Psi}$. Consider the incomplete Eisenstein series 
\[
E_{\Psi}(z) = \int_{(1/2)} \hat{\Psi}(-s) E(z, s) \frac{ds}{2\pi i}.
\]
By Duke's theorem for the modular curve, the desired convergence holds for $\varphi=1\otimes E_\Psi$. For the continuous spectrum we are therefore reduced to showing that $P_{\mathscr{D}_1,\mathscr{D}_2}^{\Delta_\nu}(f_1\otimes E_\Psi)=O({\rm err}_\nu(\psi(D)))$, for $L^2$-normalized $K_1$-invariant functions $f_1$ lying in an infinite-dimensional $\sigma_1\subset L^2(\bG_1(\Q)\backslash\bG_1(\A))$ and $\Psi$ a mean-zero compactly supported smooth function on $\R_{>0}$.

\subsection{Reduction to short arithmetic expressions}\label{secoff} 
We now bound $P_{\mathscr{D}_1,\mathscr{D}_2}^{\Delta_\nu}(f_1\otimes f_2)$ by appropriate short arithmetic expressions. By this, we mean a short Euler product (when $\nu=1$) or the inverse of a short Dirichlet series (when $\nu=2$), constructed from the Hecke eigenvalues of $f_1$ and $f_2$ at primes which split in $E$. For heuristics on how and why these arithmetical expressions arise, see Sections \ref{sec:heuristic2} and \ref{sec:heuristic1}.

When $f_i$ is cuspidal, we denote by $\pi_i={\rm JL}(\sigma_i)$ the Jacquet--Langlands lift of $\sigma_i$ to ${\bf PGL}_2/\Q$. Since $K_i$ is almost maximal, $\pi_i$ has conductor the discriminant $d_{\bB_i}$ of $\bB_i$. For a prime $p\nmid d_{\bB_i}$ write $\lambda_{\pi_i}(p)$ for the Hecke eigenvalue of $\pi_i$ at $p$. 
\begin{enumerate}
\item In \textit{symmetric} situations, in which $\nu=1$ and both $f_1,f_2$ are cuspidal, we shall be interested in the following quantity: 
\[
S_D(\pi_1,\pi_2)=\sum_{\substack{ C_0 \leq  p\leq D^c\\ p \textrm{ {\rm split} in } E\\ |\lambda_{\pi_1}(p)|, |\lambda_{\pi_2}(p)| \leq B}}\frac{(\lambda_{\pi_1}(p)-\lambda_{\pi_2}(p))^2}{p},
\]
where $C_0>\max\{d_{\bB_1}, d_{\bB_2}\}$, $c, B>0$ are constants.

\item In \textit{asymmetric situations}, either $\nu=2$ or $f_1$ is cuspidal and $f_2=E_\Psi$ is an incomplete Eisenstein series, we put
\[
T_D(\pi_1)=\sum_{\substack{ C_0 \leq  p\leq D^c\\ p \textrm{ {\rm split} in } E\\  |\lambda_{\pi_1}(p)| \leq B}}\frac{\lambda_{\pi_1}(p)^2}{p}.
\]
Here, $C_0>d_{\bB_1}$ and $c>0$ are constants, but now we also allow $B\in\R_{>0}\cup\{\infty\}$ to be infinite (so that the condition on the size of $\lambda_{\pi_1}(p)$ disappears).
\end{enumerate}
Throughout, we denote by $\Psi$ a smooth and compactly supported function on $\R_{>0}$ such that $\int_0^{\infty} \Psi(y) \, {\rm d} y = 0$. We denote by $\hat{\Psi}$ its Mellin transform.

We prove the following theorem over the course of Sections \ref{twistedmoments}-\ref{sec-off}.

\begin{theorem}\label{thm:unconditional}
As in Theorem \ref{thm11}, we assume that $\bB_1$ and $\bB_2$ are non-isomorphic when $\nu = 1$, and that $\bB_1$ is non-split when $\nu=2$. With notations as above,
\begin{enumerate}
\item\label{boundmoment} there exist constants $c>0$, $C_0>\max\{d_{\bB_1}, d_{\bB_2}\}$ such that, for every (finite) $B>0$ and $\alpha < 1/4$, we have
\begin{align*}
P_{\mathscr{D}_1,\mathscr{D}_2}^{\Delta}(f_1\otimes f_2)&\ll 
\begin{cases}
\exp\Big(-\frac14 S_D(\pi_1,\pi_2)\Big),& f_2 \textrm{ cuspidal};\\[10pt]
\exp\Big(-(\alpha - \alpha^2) T_D(\pi_1)\Big),& f_2=E_\Psi.
\end{cases}
\end{align*}
\item\label{thm21a} there exist constants $c>0$, $C_0 >\max\{d_{\bB_1}, d_{\bB_2}\}$ such that, with $B=\infty$,
\[
P_{\mathscr{D}_1,\mathscr{D}_2}^{\Delta_2}(f_1\otimes f_2) \ll (1+T_D(\pi_1))^{-1/2}.
\]
\end{enumerate}
\end{theorem}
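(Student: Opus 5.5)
We work throughout on the dual side. By Fourier expansion over the class group characters $\chi\in\ClD^\wedge$,
\[
P^{\Delta_\nu}_{\mathscr{D}_1,\mathscr{D}_2}(f_1\otimes f_2)=\frac{1}{h}\sum_{\chi}W_1(\chi^{\nu})\overline{W_2(\bar\chi)}\ \text{(up to conjugation conventions)},
\]
where $h=|\ClD|$ and $W_i(\chi)=\frac1{h}\sum_{[\mathfrak a]}f_i([\mathfrak a].x_i)\chi(\mathfrak a)$ are the twisted Heegner periods. The argument has three steps.

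\textbf{Step 1: the mean-value input.} This is the correlation result (Theorem \ref{thm12}, proved in Section \ref{twistedmoments}). For ideals $\mathfrak n$ of norm at most $D^{c}$ and any fixed $\psi\in\ClD^\wedge$, it gives
\[
\sum_\chi |W_i(\chi)|^2\chi\psi(\mathfrak n)=\frac{1}{h}\sum_{[\mathfrak a]}f_i\bar f_i([\mathfrak n\mathfrak a].x)\cdots ,
\]
and this equals a main term plus a power saving. The main term is $\|f_i\|^2$ times the Hecke eigenvalue of $\mathfrak n$, which is multiplicative, roughly $\lambda_{\pi_i}(N\mathfrak n)/\sqrt{N\mathfrak n}$ on split primes. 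We will use this as a black box for Dirichlet polynomials $M(\chi)=\sum_{N\mathfrak n\le D^c}a(\mathfrak n)\chi(\mathfrak n)$. It yields
\[
\sum_\chi|W_i(\chi)|^2|M(\chi)|^2 \;\approx\; \sum_{\mathfrak n,\mathfrak m}a(\mathfrak n)\bar a(\mathfrak m)\,\lambda_i(\mathfrak n\bar{\mathfrak m}\text{-type})\ +\ O(D^{-\delta}\|a\|_1^2).
\]
Since $\sum_\chi|W_i|^2=\|f_i\|^2\ll 1$, the task reduces to choosing mollifier-type weights.

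\textbf{Step 2, case (1), $\nu=1$.} For each $\chi$ we write $|W_1\overline{W_2}|\le \tfrac12(|W_1|^2X(\chi)+|W_2|^2X(\chi)^{-1})$ with a positive weight $X(\chi)$. Following the Radziwi\l\l--Soundararajan scheme, $X$ is a truncated exponential $X=|\exp(\mathcal P(\chi))|^2_{\rm trunc}$ of a prime-ideal sum
\[
\mathcal P(\chi)=\tfrac14\sum_{C_0\le p\le D^c,\ p=\mathfrak p\bar{\mathfrak p},\ |\lambda|\le B}\frac{(\lambda_{\pi_2}(p)-\lambda_{\pi_1}(p))(\chi(\mathfrak p)+\chi(\bar{\mathfrak p}))}{\sqrt p}.
\]
The truncation splits the prime range into dyadic-exponential blocks so that $X$ is a short Dirichlet polynomial; this is why the size condition $|\lambda|\le B$ is needed. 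Applying Step 1 to each term, the main terms factor into Euler products over split primes. Completing the square,
\[
\prod_p\Big(1+\tfrac{(\lambda_1-\lambda_2)\lambda_1}{2p}+\tfrac{(\lambda_1-\lambda_2)^2}{8p}\cdots\Big),
\]
the combination gives $\exp(-\tfrac14 S_D)$ up to a constant. The truncation errors are controlled by the standard moment bounds for $\mathcal P$, using $|\lambda|\le B$.

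For $f_2=E_\Psi$, the Fourier coefficients are not multiplicative. We therefore use an asymmetric weight built only from $\pi_1$, namely $X=|\exp(\alpha\mathcal P_1)|^2$ with $\mathcal P_1=\sum\lambda_{\pi_1}(p)\chi(\mathfrak p)/\sqrt p$. For the $E_\Psi$ side, the required input is only the $L^2$ bound $\sum|W_2|^2|M|^2\ll\|M\|^2_{\ell^2}$ (mean-value type). This should follow from the Eisenstein part of Theorem \ref{thm12}, because the main term involves $\tau$-type coefficients whose $\lambda_{\pi_1}$-correlations vanish on average. Optimizing over the exponents produces $\exp(-(\alpha-\alpha^2)T_D)$, with the constraint $\alpha<1/4$ coming from the truncation.

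\textbf{Step 2, case (2), $\nu=2$.} We use a Tur\'an--Kubilius argument. Let
\[
\mathcal M(\chi)=\sum_{\mathfrak p}\frac{\lambda_{\pi_1}(p)\chi(\mathfrak p)}{\sqrt p}
\]
over split $\mathfrak p$ with $C_0\le p\le D^c$. Step 1 gives $\sum_\chi|W_1(\chi)|^2|\mathcal M(\chi^2)-T|^2\ll T$ with $T=T_D(\pi_1)$: the main term for $W_1$ twisted by $\chi^2$ reproduces $\lambda_{\pi_1}(p)$ on $\mathfrak p^2$, which is correlated with $\lambda_{\pi_1}(p)$ itself. By contrast, Step 1 for $W_2$ gives $\sum|W_2(\chi)|^2|\mathcal M(\chi^2)|^2\ll 1+O(T/\sqrt{\cdot})$, because $\mathfrak p^2$ coefficients of $\pi_2$ are essentially uncorrelated with $\lambda_{\pi_1}(p)/\sqrt p$.

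Now split the sum into $\{|\mathcal M(\chi^2)|\ge T/2\}$ and its complement. On the first set, Cauchy--Schwarz with the $W_2$ bound gives $\ll (T^{-2})^{1/2}\cdot$(stuff). On the second, the $W_1$ bound gives $\ll (T/T^2)^{1/2}=T^{-1/2}$. Because $\chi\mapsto\chi^2$ is not surjective, we insert an average over genus characters $\psi$ (allowed by Theorem \ref{thm12}) to detect $\chi^2$.

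\textbf{Main obstacle.} The main obstacle is the Eisenstein case. There Step 1 must be made uniform for incomplete Eisenstein series, including the continuous-spectrum main terms and the hybrid subconvexity inputs. The asymmetric mollifier bookkeeping then has to still yield a genuine negative exponent.
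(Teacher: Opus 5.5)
Your architecture is the paper's: truncated-exponential mollifiers with Cauchy--Schwarz for $\nu=1$, a one-sided mollifier built from $\pi_1$ when $f_2=E_\Psi$, and Tur\'an--Kubilius conditioning with an average over quadratic $\psi$ for $\nu=2$. However, the Eisenstein case has a genuine gap.

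You assert that the $E_\Psi$-side mollified moment is of diagonal size because the $\tau$-coefficients are uncorrelated with $\lambda_{\pi_1}$ ``on average''. By Theorem \ref{thm12} this moment is $\int_{\R}\mathcal{P}^{(2)}_{\alpha}(\Lambda_{it})\Phi(t)\frac{{\rm d}t}{2\pi}$. For fixed $t$ the integrand is an Euler product over split primes with local factors $1+\frac{2\alpha\lambda_{\pi_1}(p)\tau_{it}(p)+\alpha^2\lambda_{\pi_1}(p)^2}{p}+O(p^{-2})$. The cross term $\sum_{p\ \mathrm{split}}\lambda_{\pi_1}(p)\cos(t\log p)/p$ contains $\sum_p\lambda_{\pi_1}(p)\chi_{-D}(p)p^{\pm it}/p$ and is not known to be bounded. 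So pointwise in $t$ one can lose a power of $\log D$, and no orthogonality between $\pi_1$ and Eisenstein series repairs this.

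The paper's Lemma \ref{calpha} instead uses the $t$-integral essentially. The integral $\int\tau^*_{it}(N)\Phi(t)\,{\rm d}t$ is a sum over $abd^2=N$ of terms $\frac{\mu(d)}{d}\widecheck{\Phi}\big(\frac{1}{2\pi}\log\frac{a}{b}\big)$, which is negligible unless $a\asymp b$. One then discards all signs and the splitting condition and applies Wirsing's theorem. This bounds the $b$-sum near $a$ by $(\log a)^{2\alpha-1}$ and leaves $\sum_{A=2^k}(\log A)^{2(2\alpha-1)}$, which converges exactly when $\alpha<1/4$. That is where the restriction $\alpha<1/4$ comes from, not the truncation as you claim. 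Without an estimate of this kind the bound $\exp(-(\alpha-\alpha^2)T_D(\pi_1))$ does not follow.

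Two further steps need repair.

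For $\nu=1$:
\begin{itemize}
\item If $X$ is a truncated exponential, then $X^{-1}$ is not a Dirichlet polynomial, so Theorem \ref{thm12} cannot be applied to your second weighted moment. The paper uses Lemma \ref{lemma:e-E} ($E_\ell(x)E_\ell(-x)\ge 1$ for even $\ell$ and real $x$) to take two weights $\prod_j E_{\ell_j}(\mp\mathcal{Q}_j(\chi))$. Both are Dirichlet polynomials of length at most $D^{2\sqrt{c}}$ with product $\geq 1$ pointwise. The truncation $\Omega(\mathfrak n)\le\ell_j$ is removed afterwards by Rankin's trick.
\item You need Cauchy--Schwarz, or AM--GM with an optimized scalar. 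Only the product of the two mollified moments is $\ll\exp(-\frac12 S_D(\pi_1,\pi_2))$; individually they need not be small.
\end{itemize}

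For $\nu=2$, your moments are attached to the wrong measures. The main term in Theorem \ref{thm12} depends on the primitive kernel $\mathfrak n^*$, not on ${\rm N}\mathfrak n$.
\begin{itemize}
\item Under $|W_{\mathscr{D}_1}(f_1,\chi)|^2$ the mean of $\mathcal{M}(\chi^2)$ is $O(1)$, since the main term at $\mathfrak p^2$ is $\approx\lambda_{\pi_1}(p^2)/p$. Concentration at $2T_D(\pi_1)$ holds for the measure $|W_{\mathscr{D}_1}(f_1,\chi^2)|^2$, after substituting $\chi^2\mapsto\chi$ and using positivity, at the cost of a factor $|\ClD[2]|$.
\item Conversely, $\sum_\chi|W_{\mathscr{D}_2}(f_2,\chi)|^2\mathcal{M}(\chi^2)^2$ is of order $1+T_D(\pi_1)$, not $O(1)$. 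This is because $\mathfrak p^2\bar{\mathfrak p}^2=(p^2)$ has trivial primitive kernel. That is still enough for the Markov step on $\mathcal{X}$.
\item The factor $|\ClD[2]|$ is recovered by replacing $W_{\mathscr{D}_2}(f_2,\chi)$ with $|\ClD[2]|^{-1}\sum_{\psi^2=1}W_{\mathscr{D}_2}(f_2,\chi\psi)$. Its cross terms $\psi_1\neq\psi_2$ are handled by Theorem \ref{thm12} with non-trivial $\psi$ and $\mathfrak n=\mathfrak p_1^2\mathfrak p_2^2$.
\end{itemize}
With these corrections your Tur\'an--Kubilius argument coincides with the paper's.
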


\begin{remarks}\label{rmks-upper-bds} ${\,}$
\begin{enumerate}[(a)]
\item The optimal value of $\alpha$ in part \eqref{boundmoment} of Theorem \ref{thm:unconditional} would in fact be $\alpha = 1/2$. For technical reasons that will become apparent in the proof of Lemma \ref{calpha}, we need to choose $\alpha < 1/4$. The numerical value of $\alpha$ plays no role for our application. 
\medskip
\item The requirement that $B<\infty$ in part \eqref{boundmoment} stems from an application of Rankin's trick to convert to an Euler product majorant. This argument is absent in the proof of part \eqref{thm21a}, allowing for $B=\infty$.
\end{enumerate}
\end{remarks}

As in \cite{BB}, the proof of Theorem \ref{thm:unconditional} passes through a dual expression for $P_{\mathscr{D}_1,\mathscr{D}_2}^{\Delta_\nu}(f_1\otimes f_2)$, which we now recall. For $x_i\in H_{\mathscr{D}_i}$ and $\chi\in \ClD^\wedge$, we define the \textit{$\chi$-twisted Heegner period} as
\[
W_{\mathscr{D}_i}(f_i;\chi)= \frac{1}{|\ClD|}\sum_{[\mathfrak{a}]\in \ClD} f_i([\mathfrak{a}].x_i)\chi([\mathfrak{a}]).
\]
By Parseval's identity, we have
\begin{equation}\label{parseval}
P_{\mathscr{D}_1,\mathscr{D}_2}^{\Delta_\nu}(f_1\otimes f_2)=\sum_{\chi \in{\rm Cl}^{\wedge}_D} W_{\mathscr{D}_1}(f_1;\chi^\nu)W_{\mathscr{D}_2}(f_2;\overline{\chi}),
\end{equation}
Note that the exponent $\nu$ is now placed in the first factor.

By contrast with \cite{BB}, we do not convert the twisted Heegner periods into central values of $L$-functions, via Waldspurger's formula (for cusp forms $f_i$) and the Hecke formula (for $f_i=E_\Psi$). Instead we follow a period-theoretic approach, which starts by reducing the stated bounds to asymptotic formulae with power-savings error on twisted moments of each $|W_{\mathscr{D}_i}(f_i;\chi^\nu)|^2$ separately ($i=1,2$). This reduction step, which is executed in Sections \ref{sec:mollification} and \ref{sec-off}, is best understood through the heuristics provided in Sections \ref{sec:heuristic2} and \ref{sec:heuristic1}. We then prove such asymptotic formulae using spectral theory.

In Theorem \ref{thm12} below, we state the asymptotic formulae on twisted moments that underpin the proof of Theorem \ref{thm:unconditional}. As this concerns each $f_i$ separately, we drop the index $i$ from the notation for this setting, writing $f=f_i$, $Y=Y_i$, $\sigma=\sigma_i$, $\pi=\pi_i$, and $\mathscr{D}=\mathscr{D}_i$. For an ideal $\mathfrak{n}$ of $\mathscr{O}_E$ and $x\in H_{\mathscr{D}}$ we write
\[
H_{\mathscr{D}}^\Delta([\mathfrak{n}])=\{([\mathfrak{a}].x,[\mathfrak{n}][\mathfrak{a}].x): [\mathfrak{a}]\in \ClD\}\subset Y\times Y
\]
for the joint Heegner packet of \cite[Section 4.2]{BBK}. For $\psi\in \ClD^\wedge$ we put
\begin{equation}\label{cusp}
P_{\mathscr{D}}^\Delta(f\otimes f;\mathfrak{n},\psi)=\sum_{\chi \in\ClD^{\wedge}} \chi(\mathfrak{n})  W_{\mathscr{D}}(f,\chi \psi)  \overline{W_{\mathscr{D}}(f,\chi)}.
\end{equation}
When $\psi$ is the trivial character, we write simply $P_{\mathscr{D}}^\Delta(f\otimes f;\mathfrak{n})$. Note that, in the case of a trivial shift $\mathfrak{n}=\mathscr{O}_E$, we recover the definitions from Section \ref{secgeneral} in the case of $\mathscr{D}_1=\mathscr{D}_2=\mathscr{D}$, namely, $H_{\mathscr{D}}^\Delta([\mathscr{O}_E])=H_{\mathscr{D}\times\mathscr{D}}^\Delta$ and $P_{\mathscr{D}}^\Delta(f\otimes f;\mathscr{O}_E)=P_{\mathscr{D}\times\mathscr{D}}^\Delta(f\otimes f)$.

\begin{remark}\label{rem:psi-triv-case}
We shall only need the twisted variant with a non-trivial $\psi$ in the $\nu=2$ case, as a way of overcoming the non-surjectivity of the squaring map $\ClD\rightarrow \ClD$, $[\mathfrak{a}]\mapsto [\mathfrak{a}]^2$, for composite discriminants $D$. See Section \ref{sec:off-speed-proof}, where we introduce an average of $P_{\mathscr{D}}^\Delta(f\otimes f;\mathfrak{n},\psi)$ over \textit{quadratic} $\psi$.
\end{remark}

For $N$ coprime to the conductor $d_{\bB}$ of $\pi$, we write
\begin{equation*}
\lambda_{\pi}^{\ast}(N) = \sum_{d^2 \mid N} \frac{\mu(d)}{d} \lambda_{\pi}\Big(\frac{N}{d^2}\Big) 
\end{equation*}
for the normalized ``partial'' Hecke eigenvalue of the double coset operator $\SL_2(\Z)\begin{psmallmatrix} N & \\ & 1\end{psmallmatrix}\SL_2(\Z)$, where $\lambda_{\pi}$ are the usual Hecke eigenvalues appearing as Dirichlet coefficients in the standard $L$-function of $\pi$.  We use the same notation for the eigenvalues of Eisenstein series and write, analogously,
\begin{equation}\label{div}
\tau_{it}(N) =\sum_{ab = N}\Big( \frac{a}{b}\Big)^{it}, \qquad \tau^{\ast}_{it}(N)  = \sum_{abd^2 = N} \frac{\mu(d)}{d} \Big(\frac{a}{b}\Big)^{it}
\end{equation}
for $t \in \mathbb{C}$. Let\footnote{Here and elsewhere we use $\pi$ to denote the cuspidal automorphic representation $\pi=\mathrm{JL}(\sigma)$ on $\mathbf{PGL}_2/\Q$ as well as the numerical constant $\pi=4\int_0^1 \frac{dx}{1+x^2}$. There should be no risk of confusion.} $\phi(s) = \sqrt{\pi} \frac{\Gamma(s - 1/2)\zeta(2s - 1)}{\Gamma(s) \zeta(2s)}$ be the determinant of the usual scattering matrix. We define the arithmetic function $V(N) = [\Gamma_0(N): \SL_2(\Z)] = N \prod_{p \mid N} (1 + p^{-1})$, and put
\begin{equation}\label{Lambda-pi}
\Lambda_{\pi}(\mathfrak{n})=\frac{\sqrt{{\rm N}\mathfrak{n}^{\ast}}}{V( {\rm N}\mathfrak{n}^{\ast})}\lambda^{\ast}_{\pi}({\rm N}\mathfrak{n}^{\ast}),\qquad \Lambda_{it}(\mathfrak{n})= \frac{({\rm N}\mathfrak{n}^{\ast})^{1/2} }{V ({\rm N}\mathfrak{n}^{\ast})} \tau^{\ast}_{it}({\rm N}\mathfrak{n}^{\ast}),
\end{equation}
where $\mathfrak{n}^{\ast}$ is the primitive kernel of an ideal $\mathfrak{n} \subset\mathscr{O}_E$, i.e.,\ the largest integral ideal such that $\mathfrak{n}^{\ast} (n) = \mathfrak{n}$ for some nonzero rational integer $n$.  This arithmetic function occurs naturally in the following asymptotic formula, as explained in \cite[Remark 8.2]{BBK}. 

\begin{theorem}\label{thm12} There are constants $A, \delta > 0$ such that the following holds. Let $\mathfrak{n}\subset\mathscr{O}_E$ be an integral ideal coprime to $D$ and $d_\bB$. Let $\psi$ be a class group character. Then
\[
P_{\mathscr{D}}^\Delta(f\otimes f;\mathfrak{n},\psi)=
\begin{cases}
\delta_{\psi=1}  \Lambda_{\pi}(\mathfrak{n}) +O_{\pi}\big(({\rm N}\mathfrak{n}^{\ast})^A D^{-\delta}\big),& f\textrm{ cuspidal};\\[10pt]
\delta_{\psi=1}\int_{\R}  \Lambda_{it}(\mathfrak{n}) \Phi (t) \frac{{\rm d} t}{2\pi} +O_{\Psi}\big(({\rm N}\mathfrak{n}^{\ast})^A D^{-\delta}\big),& f=E_\Psi,
\end{cases}
\]
where, for $s\in\C$, we have put
\begin{equation}\label{def:Phi}
\Phi(s) = \frac{3}{\pi}  \hat{\bar{\Psi}}(-\tfrac{1}{2} - is)\big(\hat{\Psi}(-\tfrac{1}{2}+is) + \phi(\tfrac{1}{2}+is)  \hat{\Psi}(-\tfrac{1}{2}-is)\big).
\end{equation}
\end{theorem}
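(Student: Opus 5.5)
By Parseval, \eqref{cusp} unfolds to a correlation sum on the packet. Expanding the definitions of $W_{\mathscr{D}}$ and summing over $\chi$ gives
\[
P_{\mathscr{D}}^\Delta(f\otimes f;\mathfrak{n},\psi)=\frac{1}{|\ClD|}\sum_{[\mathfrak{a}]\in\ClD}\psi([\mathfrak{a}])\, f([\mathfrak{a}].x)\,\overline{f([\mathfrak{n}]^{-1}[\mathfrak{a}].x)}
\]
(or the same with $[\mathfrak{n}]$ in place of $[\mathfrak{n}]^{-1}$, depending on convention). Thus it is the integral of $f\otimes\bar f$ over the shifted joint packet $H_{\mathscr{D}}^\Delta([\mathfrak{n}])$, twisted by $\psi$.

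\textbf{Step 1: reduce to a Hecke correspondence.} Write $\mathfrak{n}=(n)\mathfrak{n}^\ast$. Since $[(n)]$ is trivial, we may assume $\mathfrak{n}$ is primitive with norm $N={\rm N}\mathfrak{n}^\ast$. The pairs $([\mathfrak{a}].x,[\mathfrak{n}][\mathfrak{a}].x)$ then lie on the Hecke correspondence $T_N$ of degree $V(N)$; this is where primitivity is used, as in \cite[Section 4.2]{BBK}. Equivalently, the shifted joint packet sits inside the $\Delta\bT$-orbit on $Y_0(N)$-type level structure $\bG(\Q)\backslash\bG(\A)/K_0(N)$, embedded in $Y\times Y$ via the two degeneracy maps.

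So $P$ becomes a twisted Heegner period of the function $F_N(g)=f(g)\overline{f(g\,a_N)}$, viewed on the level-$K_0(N)$ quotient and evaluated over the single torus orbit $\bT(\Q)\backslash\bT(\A)g$, twisted by $\psi$.

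\textbf{Step 2: spectrally expand $F_N$.} Expand $F_N$ in $L^2$ of the level $K_0(N)$ quotient, covering the constant function, the cusp forms (including oldforms), and, in the split case, Eisenstein series.

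The constant term is $\langle F_N,1\rangle/{\rm vol}$. By unfolding the degeneracy map, this equals $\langle T_N f, f\rangle/V(N)$, which gives $\sqrt N\lambda^\ast_\pi(N)/V(N)=\Lambda_\pi(\mathfrak{n})$ when $f$ is $L^2$-normalized. Its torus period with twist $\psi$ is $\delta_{\psi=1}$ times this. For $f=E_\Psi$ the same computation, unfolding the incomplete Eisenstein series and using the Maass–Selberg / Parseval formula for $\langle T_N E_\Psi,E_\Psi\rangle$, yields $\int\Lambda_{it}(\mathfrak{n})\Phi(t)\,dt/2\pi$. The term involving $\phi(1/2+it)$ comes from the constant term of $E(z,s)$, so that $\Phi$ in \eqref{def:Phi} appears. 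There are also degenerate pieces: the constant term of $E_\Psi\overline{E_\Psi}$ is not square integrable. I would handle these by regularization, or simply by noting that $E_\Psi$ is compactly supported modulo the cusp-growth, so that $F_N$ is rapidly decaying after subtracting nothing.

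\textbf{Step 3: bound the non-constant spectrum.} Every other spectral component $\varphi$ (of level dividing $N$ and $d_\bB$) contributes
\[
\langle F_N,\varphi\rangle\cdot W_{\mathscr{D}}(\varphi,\psi).
\]
Waldspurger's formula, in the form of \cite{BBK}, bounds $|W_{\mathscr{D}}(\varphi,\psi)|^2$ by
\[
D^{-1/2}L(1/2,\pi_\varphi\times\pi_\psi)
\]
times local factors polynomial in $N$ and in the spectral parameter. For Eisenstein $\varphi$ one uses the Hecke formula instead, giving $|L(1/2+it,\psi)|^2$. Hybrid subconvexity then gives a saving $D^{-\delta}$ polynomially uniform in the spectral parameter: \cite{HM} for the $\pi_\psi$ dihedral case, and \cite{DFI3} for $\psi$ trivial and the Eisenstein part. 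The coefficients $\langle F_N,\varphi\rangle$ are controlled by Sobolev norms of $F_N$, which are $\ll N^{A}$. Summing over the spectrum with Weyl's law and smoothness of $F_N$ (rapid decay in the spectral parameter) gives $O(N^A D^{-\delta})$.

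The main obstacle I expect is twofold. First, uniformity in $N$ of the local factors and of the oldform contributions at level $N$. Second, and more seriously, the Eisenstein case: $F_N=E_\Psi\overline{E_\Psi(\cdot\,a_N)}$ needs a regularized spectral expansion in which the residual and continuous contributions must be matched exactly with $\Phi$. I would handle this by writing $E_\Psi$ as an incomplete Eisenstein series and shifting contours, keeping the growth at the cusp under control.
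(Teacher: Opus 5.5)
Your proposal is correct and has the same architecture as the paper. The common steps are:
\begin{itemize}
\item Parseval turns the sum into a $\psi$-twisted correlation sum. The paper's orientation is $\bar f([\mathfrak n][\mathfrak a].x)$, which is harmless.
\item One reduces to the primitive kernel and to a twisted Heegner period of $F_N=f\,\bar f^{(N)}$ on the aligned packet in $Y_0(N)$, via \cite{BBK}.
\item The main term comes from the constant function, computed by unfolding $\sqrt N\,T^\ast_{N,1}$.
\item The rest of the spectrum is bounded using Waldspurger/Hecke period formulas together with hybrid subconvexity. The paper uses \cite{HM} for cuspidal $\varrho$ and any class group character $\psi$, including trivial $\psi$, and \cite{DFI3} for the Eisenstein part $L(1/2+it,\psi)$. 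Your allocation of these two references is slightly off, but this is cosmetic.
\end{itemize}

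The one genuine difference is how you control the spectral coefficients $\langle F_N,\varphi\rangle$. You use that $z\mapsto Nz$ is a hyperbolic isometry; adelically, archimedean differential operators commute with the finite translation by $a_N$. Hence $\|(1+\Delta)^kF_N\|_{L^2(Y_0(N))}\ll_{f,k}1$ uniformly in $N$. Cauchy--Schwarz against the uniform Weyl law $\#\{\varphi:|\mu_\varphi|\le T\}\ll N^{1+\varepsilon}T^2$, plus $\ll N^{1/2+\varepsilon}$ cusps for the continuous spectrum, then gives $O(N^{A'}D^{-\delta})$.

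The paper argues differently in each case:
\begin{itemize}
\item In the cuspidal case (via \cite{BBK}) it writes $\langle ff^{(N)},\varrho\rangle$ through triple product and Rankin--Selberg identities and bounds those $L$-values by convexity.
\item For $E_\Psi$ it bounds $\langle E_\Psi\bar E_\Psi^{(N)},\varpi\rangle$ by unfolding and a sup norm, then controls $\int^\ast|\varpi(z)|^2$ with the pretrace formula.
\end{itemize}
Your route is softer and treats cusp forms and $E_\Psi$ uniformly, needing only that $f$ and its invariant derivatives are bounded. The paper's route keeps the $L$-function structure visible, which underlies the non-split Motohashi interpretation of Remark \ref{reciprocity} and makes $A,\delta$ explicit.

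One correction. The regularization you anticipate in the Eisenstein case is unnecessary, and your claim that something fails to be square-integrable is false. For mean-zero $\Psi$, $E_\Psi$ is the incomplete Eisenstein series $\sum_\gamma\Psi(\Im\gamma z)$, and $\Psi$ is compactly supported in $\R_{>0}$. So $E_\Psi$ vanishes identically high in the cusp of the standard fundamental domain. Hence $E_\Psi\in C_c^\infty(Y_0(1))$, and $F_N$ is compactly supported on $Y_0(N)$.

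The only residual contribution is the constant function. Its coefficient $\frac{\sqrt N}{V(N)}\langle T^\ast_{N,1}E_\Psi,E_\Psi\rangle$ equals $\int\Lambda_{it}(\mathfrak n)\Phi(t)\frac{{\rm d}t}{2\pi}$ directly, by unfolding against the constant term $y^s+\phi(s)y^{1-s}$. Your Parseval computation on $Y_0(1)$ gives the same answer after symmetrizing $t\mapsto -t$. There is nothing further to match.
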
  

Note that $\Phi(s)$ is entire since $\hat{\Psi}(-1) = 0$, and it is rapidly decaying in $t$ as long as $t$ has fixed imaginary part, since $\Psi$ is smooth. 

\begin{remark} It is at this point where we see the necessity to work with incomplete Eisenstein series. For instance, if $f$ is the \textit{spectral} Eisenstein series $E(s,1/2+it)$ the quantity $P_{\mathscr{D}}^\Delta(f\otimes f;\mathfrak{n}, \textbf{1})$ would grow by $\log D$ in general and even by $(\log D)^3$ for $t=0$ \cite[Theorem 2]{DFI2}. Another important point where incomplete Eisenstein series are crucial is Lemma \ref{calpha}; see the discussion preceding that lemma.
\end{remark}

\begin{remark}\label{reciprocity} If $\psi$ is trivial and $f$ is cuspidal of level coprime to $D$, then, by Waldspurger's theorem (see \cite[(6.5)]{BBK} and \cite[Appendix A]{BB}), we have
\begin{equation}\label{eq:twisted-first-moment}
P_{\mathscr{D}}^\Delta(f\otimes f;\mathfrak{n},\psi) = \frac{c_{\pi, D}}{|\ClD|} \sum_{\chi \in \ClD} \chi(\mathfrak{n}) \frac{L(1/2, \pi \times \chi)}{L(1, \chi_D) L(1, \text{Ad}, \pi)}
\end{equation}
for a complex number $c_{\pi, D} \asymp_\pi 1$. Theorem \ref{thm12} provides an asymptotic formula for this twisted first moment, which we hope is of independent interest. Moreover, the error term can itself be expressed in terms of canonical square-roots of central $L$-values of total degree 12, as in \eqref{cusp-cusp-expasion}. We thus obtain a non-split version of Motohashi's reciprocity formula, and our proof is an incarnation of the strategy envisioned by Michel and Venkatesh \cite[\S 4.5.1]{MV1}. For the Eisenstein spectrum, the resemblence of the Motohashi formula becomes even more apparent in the relation
\[
\sum_{\chi \in \ClD} \chi(\mathfrak{n})  L(1/2, \chi)^2\rightsquigarrow \sum_{\varrho \text { cuspidal of level } N\mathfrak{n}} L(1/2, \varrho)^2 L(1/2, {\rm BC}_{\Q(\sqrt{-D})}(\varrho))^{1/2}.
\]
\end{remark}

\subsection{Ensuring growth}\label{sec:growth}

Theorem \ref{thm:unconditional} reduces the proof of Theorem \ref{thm11} to showing that the expressions $S_D(\pi_1,\pi_2)$ and $T_D(\pi)$ tend to $\infty$ with $D$. Heuristically, one would expect
\[
S_D(\pi_1, \pi_2) \approx \sum_{ \substack{ C_0 \leq  p\leq D^c\\ p \textrm{ {\rm split} in } E}} \frac{2}{p}  \approx \log\log D\qquad\textrm{and}\qquad
T_D(\pi) \approx \sum_{ \substack{ C_0 \leq  p\leq D^c\\ p \textrm{ {\rm split} in } E}} \frac{1}{p} \approx  \frac{1}{2} \log\log D.
\]
There are three potential obstacles which could prevent this from happening: 
\begin{enumerate}
\medskip
\item\label{danger2} the numbers $|\lambda_{\pi_1}(p)-\lambda_{\pi_2}(p)|$ (resp.\ $|\lambda_{\pi}(p)|$) could be often zero or very small; or $\max(|\lambda_{\pi_1}(p)|, |\lambda_{\pi_2}(p)|)$ (resp.\ $|\lambda_{\pi}(p)|$) could often be larger than $B$; 
\medskip
\item\label{danger1} there could be few small split primes $p$;
\medskip
\item\label{danger3} there could be a conspiracy between the small split primes in $E$ and the primes for which \eqref{danger2} holds.
\medskip
\end{enumerate}
The first phenomenon is purely a property of the pair $(\pi_1,\pi_2)$ (resp.\ $\pi$), the second is purely a property of $E$, and the third knows something about both. 

Our strategy is to eliminate \eqref{danger2} unconditionally for a positive density set of primes, then eliminate \eqref{danger1} for negative fundamental discriminants for which $L(s,\chi_{-D})$ satisfies \eqref{zero-freePsi} for another positivity density set of primes $p\geq D^{1/\psi(D)}$. If the combined total density is strictly greater than 1, this will ensure enough overlap on such discriminants to eliminate \eqref{danger3}. We carry out this program in Sections \ref{sec2}-\ref{finalsec}.

We begin by addressing \eqref{danger2}. We shall show, in Lemma \ref{lem22}, that the cuspidal representations $\pi$ arising as Jacquet--Langlands lifts $\pi={\rm JL}(\sigma)$ from $\sigma\subset L^2(\bG(\Q)\backslash\bG(\A))$ with almost maximal level invariance, as defined in Section \ref{secgeneral}, have cuspidal power lifts ${\rm sym}^k\pi$ for $k=2,3,4$, and that, moreover, for any two such representations $\pi_1,\pi_2$ coming from distinct quaternion algebras, those same symmetric powers are distinct. This then allows us to apply the following result, which we prove in Section \ref{sec2}.

\begin{theorem}\label{thm15}
$\,$
\begin{enumerate} 
\item\label{Hecke-part1} Let $\pi_1,\pi_2$ be cuspidal automorphic representations of ${\bf PGL}_2/\Q$. Assume that ${\rm sym}^k\pi_1$ and ${\rm sym}^k\pi_2$ are cuspidal and distinct, for all $k=2,3,4$. There exists $\varepsilon > 0$ such that
\[
\sum_{\substack{p \leq X\\ |\lambda_{\pi_1}(p) - \lambda_{\pi_2}(p)| \geq \varepsilon} }  \log p \geq 0.508 X
\]
for $X \geq X_0$ sufficiently large.

\item\label{Hecke-part2} Let $\pi$ be a cuspidal automorphic representation of ${\bf PGL}_2/\Q$. Assume that ${\rm sym}^k\pi$ is cuspidal for all $k=2,3,4$. Then there exists $\varepsilon > 0$ such that
\[
\sum_{\substack{p \leq X\\ |\lambda_{\pi}(p)| \geq \varepsilon} }  \log p \geq 0.58 X
\]
for $X \geq X_0$ sufficiently large.
\end{enumerate}

\end{theorem}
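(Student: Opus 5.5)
Both parts will be proved by a polynomial-majorant argument fed by prime-number-theorem asymptotics for Rankin--Selberg $L$-functions of symmetric powers. Write $a_p=\lambda_{\pi_1}(p)-\lambda_{\pi_2}(p)$ in part \eqref{Hecke-part1} and $a_p=\lambda_\pi(p)$ in part \eqref{Hecke-part2}. Every monomial $\lambda_{\pi_1}(p)^j\lambda_{\pi_2}(p)^k$ with $j,k\le 4$ is a linear combination of products $\lambda_{{\rm sym}^a\pi_1}(p)\lambda_{{\rm sym}^b\pi_2}(p)$ with $a,b\le 4$, via the Clebsch--Gordan rules for $\SL_2$. The cuspidality of ${\rm sym}^k\pi_i$ for $k\le 4$ (Gelbart--Jacquet, Kim--Shahidi, Kim) and the prime number theorem for $L(s,{\rm sym}^a\pi_1\times{\rm sym}^b\pi_2)$ then give
\[
\sum_{p\le X}\lambda_{{\rm sym}^a\pi_1}(p)\lambda_{{\rm sym}^b\pi_2}(p)\log p=\big(\delta+o(1)\big)X,
\]
where $\delta=1$ if ${\rm sym}^a\pi_1\simeq{\rm sym}^b\pi_2$ and $\delta=0$ otherwise. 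Rankin--Selberg theory for cuspidal $\GL_m\times\GL_n$, with the standard zero-free region, gives this for every pair.

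The hypothesis that ${\rm sym}^k\pi_1\neq{\rm sym}^k\pi_2$ for $k=2,3,4$ (and also for $k=1$, which follows from the case $k=2$) implies that only the pair $a=b=0$ contributes a main term. There is one point to check: ${\rm sym}^a\pi_1$ with $a\ne b$ can never equal ${\rm sym}^b\pi_2$, since the two have different ranks. Hence every moment $\sum_{p\le X}\lambda_{\pi_1}(p)^j\lambda_{\pi_2}(p)^k\log p$ with $j,k\le4$ is asymptotic to $m_{j,k}X$, where $m_{j,k}$ is the corresponding joint moment of two \emph{independent} Sato--Tate variables $(2\cos\theta_1,2\cos\theta_2)$. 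In part \eqref{Hecke-part2} the same holds with one variable, using only the moments up to degree $8$. This also requires ${\rm sym}^a\pi\times{\rm sym}^b\pi$ for $a,b\le4$, again via Rankin--Selberg.

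The density bound then comes from a majorant. Choose a real polynomial $Q$ of degree at most $4$ in each variable (degree at most $8$ in the one-variable case) such that $Q(x,y)\le \mathbf{1}_{|x-y|\ge\varepsilon}$ on the region $|x|,|y|\le 2$ where Ramanujan--Petersson would place the eigenvalues. We only know $|\lambda_\pi(p)|\le 2$ on average, not pointwise. This is handled by working with $Q(x,y)=R(x,y)-\varepsilon^{-2}(\ldots)$, or more simply by choosing $Q$ so that $Q\le \mathbf{1}_{|x-y|\ge\varepsilon}$ holds on all of $\R^2$. A cleaner device of the same kind is
\[
Q=(x-y)^2\big(c_0-c_1(x-y)^2\big)/M,
\]
which is negative for large arguments; this can be made nonpositive where $|x-y|<\varepsilon$ fails to matter. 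Summing $Q(a_p)\log p$ gives
\[
\sum_{|a_p|\ge\varepsilon}\log p\ \ge\ \big(\mathbb{E}_{\rm ST}[Q]+o(1)\big)X-O(\varepsilon\text{-error}),
\]
where the $O(\varepsilon)$-error comes from the primes with $|a_p|<\varepsilon$, on which $Q$ is small. As $\varepsilon\to0$, the remaining task is a linear program: maximize $\mathbb{E}_{\rm ST\otimes ST}[Q]$ over polynomials $Q$ of bidegree $\le4$ with $Q\le1$ wherever this must hold. Bounded-degree majorants will have to be used in the form $x^2\mapsto$ a polynomial in $x^2$ bounded above by $1$. For part \eqref{Hecke-part2} a one-variable polynomial in $x^2$ of degree $4$ suffices, with Sato--Tate moments $1,2,5,14$ available. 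An example is $Q(x)=\alpha x^2-\beta x^4+\gamma x^6-\ldots$, subject to $Q\le1$ on $\R$ and $Q\le O(\varepsilon)$ near $0$; it should be optimized numerically to exceed $0.58$.

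The main obstacle is this final optimization, exhibiting explicit polynomials that beat $0.508$ and $0.58$. Since the eigenvalues are not known to lie in $[-2,2]$, the constraint $Q\le1$ must hold on all of $\R$ (or $\R^2$) rather than only on the Sato--Tate support, and this is what makes the programme tight. I would first try quadratic-in-$(x-y)^2$ majorants, and add mixed terms if those fall short, certifying $\sup Q\le1$ by exact algebra. This is the step the paper says was checked in Lean.
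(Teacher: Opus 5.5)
There is a genuine gap in part (1). The linear programme you set up ranges over $Q$ of degree at most $4$ in each variable, and it cannot produce $0.508$, nor any constant above $1/2$.

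To see this, fix a small $\eta>0$ and let $\nu_\eta=(\tfrac12-\eta)(\delta_a+\delta_{-a})+\eta(\delta_b+\delta_{-b})$. Choose $a,b>0$ so that $\int x^2\,d\nu_\eta=1$ and $\int x^4\,d\nu_\eta=2$; solving gives $b^2\sim(2\eta)^{-1/2}$ and $a^2\to1$. Then $\nu_\eta$ has the Sato--Tate moments up to degree $4$. Consequently $\nu_\eta\otimes\nu_\eta$ agrees with $\mathrm{ST}\otimes\mathrm{ST}$ on every monomial $x^iy^j$ with $i,j\le 4$.

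Now take any admissible $Q$, meaning $Q\le1$ on $\R^2$ and $Q\le 0$ on the diagonal. The diagonal atoms contribute at most $0$ and the off-diagonal atoms at most their mass, so
$\mathbb{E}_{\mathrm{ST}\otimes\mathrm{ST}}[Q]=\int Q\,d(\nu_\eta\otimes\nu_\eta)\le 1-2(\tfrac12-\eta)^2-2\eta^2$.
The right-hand side tends to $1/2$ as $\eta\to0$. Ramanujan is not known for Maa\ss{} forms, so you cannot exclude the far-out atoms at $\pm b$. Hence neither numerical optimisation nor additional mixed terms inside bidegree $(4,4)$ can rescue the argument. The quadratic-in-$(x-y)^2$ majorants you intend to try first give only $2/5$, as in the paper's warm-up.

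The missing idea is that more automorphic information is available in part (1) than you use. The pure moments $\sum_{p\le X}\lambda_{\pi_i}(p)^k\log p$ for $k\le 8$ are accessible. They come from $L(s,{\rm sym}^k\pi_i)$ for $k\le8$ (Kim--Shahidi), or equivalently from ${\rm sym}^a\pi_i\times{\rm sym}^b\pi_i$ with $a,b\le4$, which is exactly what you already invoke in part (2).

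The paper works in the space spanned by $x^iy^j$ with $i,j\le 4$ together with $x^i$ and $y^i$ for $i\le 8$. Pinning the marginals up to degree $8$ destroys dual measures like $\nu_\eta$: its sixth moment is about $1+(2\eta)^{-1/2}$ rather than $5$. The paper's explicit $f=1-Q$ uses substantial $x^8,y^8,x^6,y^6$ terms, comes with an explicit sum-of-squares certificate for $f\ge0$, and attains $\mathbb{E}[f]\approx 0.4914$.

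Your plan for part (2) is sound. For example, $Q=1-(\tfrac{10}{9}-\tfrac43x^2+\tfrac13x^4)^2$ gives $\mathbb{E}[Q]=47/81>0.58$.

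A smaller point concerns prime powers. You assert the prime-sum asymptotic for each ${\rm sym}^a\pi_1\times{\rm sym}^b\pi_2$ separately. For $a,b$ near $4$, the prime-square contributions are not controlled by the Kim--Sarnak bound alone. The paper sidesteps this by using $f\ge 0$ everywhere (your $Q\le 1$) to add in all prime powers $p^k$ with $k<K$ at no cost. Then only the von Mangoldt sums of the isobaric $\pi_1^{\otimes i}\otimes\pi_2^{\otimes j}$ are needed, and Kim--Sarnak handles the tail $k\ge K$.
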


\begin{remark}\label{rem:S-T}
Theorem \ref{thm15} ensures that the logarithmic density of primes verifying the indicated inequalities on Hecke eigenvalues is strictly larger than the crucial threshold 1/2. If both $\pi_1$ and $\pi_2$ (resp.\ $\pi$) correspond to holomorphic modular forms, this would in fact be an easy consequence of the joint Sato--Tate conjecture (resp.\ Sato--Tate conjecture); see e.g.\ \cite{Th} for an effective version. For Maa{\ss} forms we have to rely on partial progress towards these conjectures provided by the functoriality of the first few symmetric power lifts.  

It is extremely fortunate that the automorphy (and cuspidality criteria) of the first few symmetric powers ${\rm sym}^2$, ${\rm sym}^3$, ${\rm sym}^4$ of ${\bf GL}_2$, proved by Gelbart--Jacquet, Kim--Shahidi, and Kim, just suffices to beat the constant $1/2$. Indeed, if one removes, for example, the hypothesis that ${\rm sym}^4\pi_1\neq {\rm sym}^4\pi_2$ in Part \eqref{Hecke-part1}, but retains the others, then there exist pairs of distinct even icosahedral Galois representations $\rho_1,\rho_2$, which conjecturally correspond to eigenvalue 1/4 Maa\ss\, cusp forms, for which ${\rm tr}(\rho_1 ({\rm Frob}_p))={\rm tr}(\rho_2({\rm Frob}_p))$ for a set of primes $p$ of Dirichlet density $3/5$; see \cite{LMFDB}. Similarly, if one removes in \eqref{Hecke-part2} the hypothesis that ${\rm sym}^2(\pi)$ is cuspidal, then dihedral forms have vanishing Hecke eigenvalues on a density 1/2 of primes.
\end{remark}

Regarding \eqref{danger1}, we prove the following theorem in Section \ref{sec:zeros}. 

\begin{theorem}\label{thm14} Let $1 \leq \psi(D) \leq o(\log D)$ be a function tending to infinity. Then for any fixed $\delta < 1/2$ and any sufficiently large negative fundamental discriminant $-D$ such that $L(s,\chi_{-D})$ satisfies \eqref{zero-freePsi} we have 
\[
\sum_{\substack{X \leq p \leq X^2 \\ \chi_{-D}(p) = 1}} \frac{1}{p} \geq \delta  \sum_{ X \leq p \leq X^2} \frac{1}{p}, 
\]
uniformly for $X \geq D^{1/\psi(D)}$.
\end{theorem}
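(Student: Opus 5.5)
The plan is to write the condition $\chi_{-D}(p)=1$ as $\tfrac12(1+\chi_{-D}(p))$, up to the $O(\log D)$ ramified primes, whose contribution is negligible since all $p\ge X$ are large. Since $\sum_{X\le p\le X^2}1/p\to\log 2$, the claim reduces to
\[
\sum_{X\le p\le X^2}\frac{\chi_{-D}(p)}{p}\ \ge\ -(1-2\delta)\log 2+o(1)
\]
uniformly for $X\ge D^{1/\psi(D)}$. The slack $1-2\delta>0$ comes from $\delta<1/2$. Its role is to let me replace the sharp cutoff $[X,X^2]$ by a smooth minorant. That costs an arbitrarily small but fixed multiple of $\log 2$, so I only have to show that a smoothed character sum is $o(1)$.

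Concretely, I will fix $\eta>0$ small. Then I pick a smooth function $w$ of $u=\log p/\log X$, supported in $[1,2]$, with $w=1$ on $[1+\eta,2-\eta]$ and $0\le w\le 1$. I bound the original sum below by $\sum_p \tfrac12(1+\chi_{-D}(p))w(\log p/\log X)/p$. The contribution of the $1$ part is $\int w\,du/u+o(1)\ge(1-O(\eta))\log 2$. It then suffices to prove
\[
\Sigma:=\sum_p\frac{\chi_{-D}(p)}{p}\,w\Big(\frac{\log p}{\log X}\Big)=o(1).
\]
By Mellin inversion, $\Sigma$ equals $\frac{1}{2\pi i}\int_{(c)}\log L(1+s,\chi_{-D})\,\widehat W(s)\,ds$ up to the absolutely convergent contribution of prime powers $p^k$, $k\ge2$, which is $O(X^{-1})$. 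Here $\widehat W(s)=\log X\int w(u)X^{us}\,du$, and integrating by parts gives $|\widehat W(s)|\ll_{A,\eta} \log X\,(1+|s|\log X)^{-A}X^{\Re s}$ for $\Re s\le0$ and $|X^{u s}|\le X^{\Re s}$.

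Next I shift the contour to $\Re s=-\sigma_0$, where $\sigma_0=\tfrac12\cdot 20\psi\log\psi/\log D$, for $|\Im s|\le T:=2\psi^2/\log D$. There I need the zero-free region \eqref{zero-freePsi}. Since $L(s,\chi_{-D})$ has no pole at $1$, no residue is picked up. Beyond height $T$, I keep the line at $\Re s=c=1/\log X$ (or shift only partially). There $|s|\log X\ge T\log D/\psi=2\psi$, so the decay of $\widehat W$ gives $\psi^{-A}$ times a crude bound for $\log L(1+s)$. That crude bound is $\ll\log\log D$ near $\Re s=0$, via the standard bound $-\tfrac{L'}{L}(1+\sigma+it)\ll \log D+1/\sigma$. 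On the shifted segment, $|X^{s}|\le X^{-\sigma_0}\le \exp(-10\log\psi)=\psi^{-10}$, using $\log X\ge \log D/\psi$. The horizontal connecting segments are handled in the same way.

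The main obstacle is an upper bound for $|\log L(1+s,\chi_{-D})|$ on the shifted segment, polynomial in $\psi$ (and at most a power of $\log\log D$). The zero-free region is only of width about $\psi\log\psi/\log D$, which is a tiny distance from the line $\Re=1$ in the $\log D$ scale. I intend to get this bound from Borel--Carath\'eodory applied to $\log L$ on a disc centred at $1+c'+it$, with $c'=\psi/\log D$ and radius slightly less than $c'+2\sigma_0$. That disc lies inside the zero-free region, because the region has width $20\psi\log\psi/\log D>2\sigma_0+c'$. At the centre, $|\log L|\ll\log\log D$, and on the larger disc $\Re\log L=\log|L|\le O(\log D)$. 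Shrinking to the inner radius reaching $\Re=1-\sigma_0$ gives $|\log L|\ll \tfrac{r}{R-r}\log D$. This needs $R-r\gg \psi\log\psi/\log D$, which the factor $20\log\psi$ in \eqref{zero-freePsi} appears designed to permit, yielding $|\log L|\ll \log D\cdot(\log D)^{-1}\psi^{O(1)}\cdot(\log\psi)^{-1}$, i.e.\ polynomial in $\psi$. If this is too lossy, I would first try the alternative of working with $L'/L$ and the Hadamard-product inequality $-\Re\frac{L'}{L}(s)\le O(\log D)-\sum_{|s-\rho|\le r}\Re\frac1{s-\rho}$. Either way, the segment's total contribution is $\ll T\log X\cdot\psi^{-10}\cdot\psi^{O(1)}$, which is $o(1)$ provided the exponents balance. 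Balancing these exponents, using the constants $20$ and $2$ in \eqref{zero-freePsi}, is the delicate point I expect to require care. Letting $\eta\to0$ slowly then gives the theorem for every fixed $\delta<1/2$.
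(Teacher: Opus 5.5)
There is a genuine gap. Your argument rests on bounding $|\log L(1+s,\chi_{-D})|$ itself on the contour, and that quantity carries a loss depending on $D$ that no power of $\psi$ can absorb.

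Near $\Re s=1$ the only available bound is $\ll\log\log D$. The hypothesis \eqref{zero-freePsi} says nothing about primes below $D^{1/\psi(D)}$. Those primes alone can contribute $\gg\log\log D$ to $\log L(1+s,\chi_{-D})$ for $|s|\ll\psi^2/\log D$; for example, this happens if $\chi_{-D}(p)=1$ for all $p\le \exp(\sqrt{\log D})$.

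You use exactly this crude bound in two places, and the savings are too small in both:
\begin{itemize}
\item On the tail $|\Im s|>T$ you save only $\psi^{-A}$ for a fixed $A$.
\item On the shifted segment you save $X^{-\sigma_0}=\psi^{-10}$ when $X=D^{1/\psi}$. Given the width of the zero-free region, you can never save more than about $\psi^{-20}$ there.
\end{itemize}
Since $\psi$ may grow slower than any power of $\log\log D$, neither $\psi^{-A}\log\log D$ nor $\psi^{-10}\log\log D$ is $o(1)$. This is not an exponent-balancing issue.

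The Borel--Carath\'eodory step is also miscomputed. With $r\asymp R-r\asymp \psi\log\psi/\log D$ one has $\frac{r}{R-r}\log D\asymp\log D$, not $\psi^{O(1)}$. Even with the correct $M=\max\log|L|\ll \psi\log\psi+\log\log D$, the result is $|\log L|\ll\psi\log\psi+\log\log D$, which keeps the fatal term. In addition, for $|t|$ near $T=2\psi^2/\log D$ your disc leaves the zero-free rectangle vertically. The contour height must be a fixed fraction of the zero-free height, as in the paper's choice of $T/\log X$ against $2T/\log X$.

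The missing idea is to cancel the small primes before moving the contour. Because $W$ is supported on $p\ge X$, the damped sum $\sum_p \chi(p)p^{-1-(1+\Delta)/\log X}W(\log p/\log X)$ is trivially $O(e^{-\Delta})$. After subtracting it, the paper's integrand becomes
$$H(s)=\log L\big(1+\tfrac{1}{\log X}+s,\chi\big)-\log L\big(1+\tfrac{1+\Delta}{\log X}+s,\chi\big),$$
in which the small primes cancel. This gives two bounds free of $D$:
\begin{itemize}
\item On $\Re s\ge 0$, trivially $|H|\ll\log(2+\Delta)$.
\item On $\Re s\ge -\Delta/\log X$, $|\Im s|\le T/\log X$, Lemma~\ref{le:main} gives $H\ll(\Delta/\eta)^3$. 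It does so by integrating the bound on $(L'/L)'$ from Lemma~\ref{le:bound}, which uses only the local zero count \eqref{fogels}.
\end{itemize}
With $\eta\ge 1/\psi$, $\Delta=10\log\psi$ and $T=\psi$, which is exactly what the constants in \eqref{zero-freePsi} permit, the final bound $(\Delta/\eta)^3(e^{-\Delta}+c_K\Delta T^{-K})$ tends to $0$.

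An alternative fix closer to your $L'/L$ fallback is a zero-by-zero explicit formula for $\sum_n\Lambda(n)\chi(n)n^{-1}(\log n)^{-1}w(\log n/\log X)$, with each zero bounded individually. This also avoids the loss, because the factor $1/\log n\asymp 1/\log X$ converts the zero density $\log D$ into a factor $\psi$. The one-sided Hadamard inequality you mention does not give the two-sided control this requires.
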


In Section \ref{finalsec}, we combine the above two results, to deduce the following corollary, which addresses the final problem \eqref{danger3}, essentially by a pigeonhole argument. 

\begin{corollary}\label{lem61} Let $1 \leq \psi(D) \leq o(\log D)$ be a function tending to infinity. Then there are constants $B>2$ and $\kappa>0$ such that for any sufficiently large negative fundamental discriminant $-D$ such that $L(s,\chi_{-D})$ satisfies \eqref{zero-freePsi} we have $S_D(\pi_1,\pi_2)>\kappa \log\psi(D)$ and $T_D(\pi)>\kappa \log\psi(D)$.
\end{corollary}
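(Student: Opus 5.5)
We will bound $S_D(\pi_1,\pi_2)$ from below by restricting to primes in dyadic-type ranges $[X, X^2]$ with $D^{1/\psi(D)} \leq X$ and $X^2\leq D^c$. The number of disjoint ranges $[X_j, X_j^2]$ with $X_j = D^{2^j/\psi(D)}$ fitting below $D^c$ is $\gg \log\psi(D)$. We then show that each such range contributes $\gg 1$ to $S_D$ (resp.\ $T_D$); summing over $j$ gives the claimed lower bound $\kappa\log\psi(D)$. Since $\sum_{X\le p\le X^2}1/p = \log 2 + o(1)$, it suffices to show that the primes $p\in[X,X^2]$ that are split in $E$ and satisfy $\varepsilon\le|\lambda_{\pi_1}(p)-\lambda_{\pi_2}(p)|$, $|\lambda_{\pi_i}(p)|\le B$, carry reciprocal mass $\gg 1$. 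Each such prime contributes at least $\varepsilon^2/p$.

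\textbf{Step 1: Hecke side.} Convert Theorem \ref{thm15}\eqref{Hecke-part1} from the $\log p$-weighted form to $1/p$-weights on $[X,X^2]$ by partial summation. The set $\mathcal{P}_\varepsilon$ of primes with $|\lambda_{\pi_1}(p)-\lambda_{\pi_2}(p)|\ge\varepsilon$ has $\sum_{p\le Y,\,p\in\mathcal{P}_\varepsilon}\log p\ge 0.508Y$ for large $Y$. Together with the prime number theorem, $\sum_{p\le Y}\log p\sim Y$, this shows that the complement has $\theta$-mass $\le 0.492Y$. Partial summation then gives $\sum_{X\le p\le X^2,\,p\notin\mathcal{P}_\varepsilon}1/p\le (0.492+o(1))\log 2$. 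Note that only upper bounds for the complement are needed, so the inequality direction in partial summation is favourable.

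Next remove the primes with $|\lambda_{\pi_i}(p)|>B$. By Rankin--Selberg, $\sum_{p\le Y}|\lambda_{\pi_i}(p)|^2\log p\sim Y$, so the Chebyshev-type bound $\#\{|\lambda|>B\}$-mass $\le B^{-2}(1+o(1))Y$ holds. After partial summation this contributes at most $2B^{-2}\log 2$ to the $1/p$-mass on $[X,X^2]$, which is negligible once $B$ is large. Hence the good set $\mathcal{G}$ has relative $1/p$-mass at least $0.508-2/B^2-o(1)$ on $[X,X^2]$. For $T_D$ the same argument uses Theorem \ref{thm15}\eqref{Hecke-part2}, giving density $0.58$.

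\textbf{Step 2: Field side and pigeonhole.} Theorem \ref{thm14} applies uniformly for $X\ge D^{1/\psi(D)}$. Choosing $\delta$ close to $1/2$ (say $\delta=0.499$), the split primes have relative mass $\ge\delta$ on $[X,X^2]$. Inclusion–exclusion then gives
\[
\sum_{\substack{X\le p\le X^2\\ p\ \mathrm{split},\ p\in\mathcal{G}}}\frac1p\ \ge\ \bigl(\delta+0.508-2B^{-2}-1-o(1)\bigr)\log 2\ \gg 1 ,
\]
provided $B$ is large enough that $2B^{-2}<0.007$. Multiplying by $\varepsilon^2$ and summing over the $\gg\log\psi(D)$ disjoint ranges yields $S_D>\kappa\log\psi(D)$. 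The constraint $C_0\le p$ is harmless for large $D$. For $T_D$ the margin $0.58+\delta-1$ is comfortable.

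\textbf{Main obstacle.} The main difficulty is uniformity. Theorem \ref{thm15} holds only for $X\ge X_0$, but since $X\ge D^{1/\psi(D)}$ with $\psi(D)=o(\log D)$, we have $X\to\infty$, so this is fine. The delicate point is the tight numerical margin, $0.508+\delta>1$, which forces $\delta$ close to $1/2$ and $B$ large. We must also make sure that the partial summation loses nothing at leading order; the $o(1)$ terms must be uniform across all the ranges. This holds because every range starts at $X\ge D^{1/\psi(D)}\to\infty$.
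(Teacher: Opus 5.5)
Your proposal is correct and follows the paper's argument. On each range $[X,X^2]$ with $D^{1/\psi(D)}\le X\le D^{c/2}$, you combine Theorem \ref{thm14}, Theorem \ref{thm15}, and a Rankin--Selberg bound on the primes with $|\lambda_{\pi_i}(p)|>B$ by inclusion--exclusion, and then sum over the $\gg\log\psi(D)$ ranges $Z=X,X^2,X^4,\dots$. The one difference is how you pass from the $\log p$-weighted density of Theorem \ref{thm15} to $1/p$-weights: your partial summation uses Theorem \ref{thm15} exactly as stated, whereas the paper's splitting into short pieces $[Y,\gamma Y]$ with $\gamma$ near $1$ implicitly needs the short-interval form of Theorem \ref{thm15}, which its proof supplies.
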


Combining all results in this section completes the proof of Theorem \ref{thm11}.

\section{Twisted moments of $L$-functions}\label{twistedmoments}

In this section we prove Theorem \ref{thm12}. Related formulae appear in \cite{BBK} for $f$ cuspidal and in \cite{DFI2, Bl} for $f=E_\Psi$. Our proof proceeds by writing \eqref{cusp} as a twisted Heegner period on a Hecke correspondence inside $Y\times Y$, over which we spectrally expand. 

\subsection{Proof of Theorem \ref{thm12}: cuspidal case}\label{sec:Theorem12}

Recall the defining expression \eqref{cusp} for $P_{\mathscr{D}}^\Delta(f\otimes f;\mathfrak{n},\psi)$. By the Parseval relation \cite[(6.4)]{BBK}, we obtain 
\begin{equation}\label{momentM1}
P_{\mathscr{D}}^\Delta(f\otimes f;\mathfrak{n},\psi)=\frac{1}{|\ClD|}\sum_{[\mathfrak{a}]\in \ClD} f([\mathfrak{a}].x)\bar{f}([\mathfrak{n}][\mathfrak{a}].x) \psi([\mathfrak{a}]).
\end{equation}
Note that both sides are invariant under changing $\mathfrak{n}$ into $\mathfrak{n}(n)$ for any nonzero rational integer $n$. In particular, we may replace $\mathfrak{n}$ by its primitive kernel $\mathfrak{n}^{\ast}$, whose norm we agree to denote by $N$. If $\psi$ is trivial, \cite[Prop.\ 9.4]{BBK} gives a spectral decomposition of the right-hand side in terms of automorphic forms of level $N$. The main term comes from the contribution of the constant function and equals $\Lambda_{\pi}(\mathfrak{n})$ (see also \cite[Remark 8.2]{BBK}). For non-trivial $\psi$, the  contribution of the constant function vanishes, since $\sum_{[\mathfrak{a}]\in \ClD}  \psi(\mathfrak{a}) = 0$. 
 
The non-constant part of the spectrum is analyzed in \cite[Lemmata 10.2, 10.3]{BBK} for trivial $\psi$, and \cite[Lemma 10.3]{BBK} holds with small modifications for arbitrary $\psi$. In this way the non-constant contribution can be bounded by
\begin{equation}\label{cusp-cusp-expasion}
\frac{ (ND)^{\varepsilon}}{D^{1/4 } N^{1/2}} \sum_{\varrho \textrm{ cuspidal level } N} |L(1/2, \varrho \times \psi)  L(1/2, \pi\times \pi \times \varrho )|^{1/2} (1 + |t_\varrho|)^{-100}
\end{equation}
where $t_\varrho$ is the spectral parameter of $\varrho$, plus a similar term for the Eisenstein spectrum. 

When $\varrho$ is cuspidal, \cite[Theorem 1]{HM} provides a subconvex bound for $L(1/2, \varrho \times \psi)$ in terms of $D$ with polynomial dependence in the parameters of $\varrho$. In the corresponding Eisenstein term, \cite[Theorem 2.5]{DFI3} provides a subconvex bound on $L(1/2 + it, \psi)$ in the $D$-aspect with polynomial dependence in the $t$ aspect. The convexity bounds for all other $L$-functions gives an error term of the form $O(N^A D^{-\delta})$ for some $A, \delta > 0$.

\begin{remark}
The first term $\Lambda_{\pi}(\mathfrak{n})$ goes to zero as $N\rightarrow\infty$, by known non-trivial bounds towards the Ramanujan conjecture. The utility of Theorem \ref{thm12} lies in the power savings in $D$ with polynomial dependence in $N$, as it shows the existence of some $\alpha>0$ such that the second term also goes to zero as $N\rightarrow\infty$ in the range $N\ll D^\alpha$.
\end{remark}

\begin{remark}\label{rem1/6}
Although not important for our purposes, we can explicate the values of $A$ and $\delta$ using known moment and subconevxity bounds, as in \cite[Remark 10.1]{BBK} for trivial $\psi$. 
\end{remark}

\subsection{Proof of Theorem \ref{thm12}: Eisenstein case}

Applying \eqref{momentM1} with $E_\Psi$ we get
\begin{equation}\label{momentM1eis}
P_{\mathscr{D}}^\Delta(E_\Psi\otimes E_\Psi;\mathfrak{n},\psi)=\frac{1}{|\ClD|}\sum_{[\mathfrak{a}]\in \ClD} E_\Psi(\tau_{[\mathfrak{a}]}) \overline{E}_\Psi(\tau_{[\mathfrak{n}][\mathfrak{a}]}) \psi([\mathfrak{a}]).
\end{equation}
Once again, we may replace $\mathfrak{n}$ by its primitive kernel $\mathfrak{n}^*$ and we write $N={\rm N}\mathfrak{n}^*$. 

We recall some of the structural elements of the spectral expansion of $\varphi\mapsto P_{\mathscr{D}}^\Delta(\varphi)$ on $Y_0(1)\times Y_0(1)$ across the level $N$ Hecke correspondence. These elements are common to both the cuspidal and Eisenstein cases of Theorem \ref{thm12}; we provide more details in the latter case, as this setting was not treated in \cite{BBK}.

Let $Y_0(N)=\Gamma_0(N) \backslash \mathbb{H}$, a degree-$V(N)$ cover of the modular surface $Y_0(1)$. Recall from \cite[Section 7]{BBK} that $\mathfrak{n}$ determines an aligned Heegner packet $H_{\mathscr{D}}(\mathfrak{n})$ of discriminant $-D$ on $Y_0(N)$. For a function $u\in C^\infty(Y_0(N))$ and $x\in H_{\mathscr{D}}(\mathfrak{n})$ we define a $\psi$-twisted Heegner period on $Y_0(N)$ by
\[
W_{\mathscr{D}}(u;\mathfrak{n}, \psi)=\frac{1}{|\ClD|}\sum_{[\mathfrak{a}]\in \ClD} u([\mathfrak{a}].x)\psi([\mathfrak{a}]).
\]
Let $Y_0^\Delta (N)=\{(\SL_2(\Z).z, \SL_2(\Z).gz): g\in\SL_2(\Z)\begin{psmallmatrix} N & \\ & 1\end{psmallmatrix}\SL_2(\Z)\}\subset Y_0(1)\times Y_0(1)$ denote the level-$N$ Hecke correspondence. We have a bijective map
\[
\beta_N: Y_0(N)\rightarrow Y_0^\Delta (N),\qquad \Gamma_0(N).z\mapsto ( \SL_2(\Z).z, \SL_2(\Z).Nz).
\]
It follows from \cite[Sections 8-9]{BBK} that $H_{\mathscr{D}}^\Delta([\mathfrak{n}])\subset Y_0^\Delta(N)$ and that 
\[
P_{\mathscr{D}}^\Delta(E_\Psi\otimes E_\Psi;\mathfrak{n},\psi)=W_{\mathscr{D}}((E_\Psi\otimes E_\Psi)_{|Y_0^\Delta (N)}\circ \beta_N;\mathfrak{n}, \psi).
\]
Since $(E_\Psi\otimes E_\Psi)_{|Y_0^\Delta (N)}\circ \beta_N=E_\Psi \overline{E}_\Psi^{(N)}$, where we have put $E_\Psi^{(N)}(z)=E_\Psi(Nz)$, we deduce that the right-hand side of \eqref{momentM1eis} can be written as $W_{\mathscr{D}}(E_\Psi \overline{E}_\Psi^{(N)};\mathfrak{n}, \psi)$.

We endow $Y_0(N)$ with the uniform probability measure given by $V(N)^{-1}\frac{3}{\pi}\textrm{d}x\textrm{d}y/y^2$. Let $\langle ., .\rangle$ denote the inner product on $L^2(Y_0(N))$ and, in an effort to simplify the notation, we denote the spectral decomposition in $L^2(Y_0(N))$ by 
\[
f(z) =  \langle f,  \textbf{1}\rangle+ \int^{\ast}_{(N)} \langle f, \varpi\rangle \varpi(z)  \, {\rm d} \varpi,
\]
where $\int^{\ast}_{(N)}$ denotes a combined sum/integral over an orthonormal basis of cusp forms and a suitably parametrized set of Eisenstein series. In this notation, we need to evaluate 
\begin{equation}\label{spec}
\delta_{\psi = 1} \langle E_\Psi \overline{E}^{(N)}_\Psi, {\bf 1}\rangle   +   \int^{\ast}_{(N)} \langle E_\Psi \overline{E}^{(N)}_\Psi,  \varpi  \rangle W_{\mathscr{D}_i}(\varpi;\mathfrak{n}, \psi) \, {\rm d} \varpi.
\end{equation}
The main term will come from the contribution of the constant function, which we have singled out; we estimate everything else rather coarsely. 

We begin by estimating the spectral expansion in \eqref{spec}, which is the analogue for $f=E_\Psi$ of \eqref{cusp-cusp-expasion}. Combining \cite[Lemmata 10.3, 10.5]{BBK} with the same subconvexity bounds as in Section \ref{sec:Theorem12}, we have
\[
W_{\mathscr{D}}(\varpi;\mathfrak{n}, \psi)\ll_B D^{- \delta} N^A  (1 + |\mu_{\varpi}|)^{-B}
\]
for some $\delta, A > 0$ and any $B > 0$, where $\mu_{\varpi}$ denotes the archimedean Langlands parameter of $\varpi$. Thus the second term in \eqref{spec} is bounded by
\begin{equation}\label{spectral-error}
\ll_B D^{-\delta} N^A  \int^{\ast}_{(N)} |\langle E_\Psi \overline{E}^{(N)}_\Psi,  \varpi  \rangle |  \frac{{\rm d} \varpi}{(1 + |\mu_{\varpi}|)^{B} },
\end{equation}
and it remains to show that the spectral sum/integral is polynomially bounded in $N$. In the cuspidal setting of Section \ref{sec:Theorem12}, this is achieved by applying triple product and Rankin--Selberg identities which express $\langle f f^{(N)},  \varpi   \rangle$ in terms of $L$-functions and bounding those crudely. For $f=E_\Psi$ an incomplete Eisenstein series we shall estimate $\langle E_\Psi \overline{E}^{(N)}_\Psi ,  \varpi  \rangle$ directly.

Let $\xi(z) = \sum_{\gamma\in  \Gamma_0(N)\backslash \SL_2(\Z)} (\varpi {E_\Psi^{(N)}} )(\gamma z)$. By unfolding we have
\[
 \langle E_\Psi \overline{E}_\Psi^{(N)}, \varpi \rangle= \langle E_\Psi , \varpi {E_\Psi^{(N)}} \rangle  =  \int_0^1\int_0^{\infty} \Psi(y)\xi(z) \frac{3}{\pi}\frac{{\rm d} x\, {\rm d} y}{y^2}\ll \max_{\substack{x \in [0, 1]\\y \in \text{supp}(\Psi)}}  |\xi(x+iy)|.
\]
For a convenient choice of representatives of $\Gamma_0(N)\backslash \SL_2(\Z)$, we may take $\begin{psmallmatrix} * & *\\ u & v\end{psmallmatrix}$ with $v\mid N$, $u$ mod $N/v$. For such representatives $\gamma$ we have the crude bound $N^{-2} \ll_z \Im \gamma z\ll_z 1$. Since there are $V(N)\ll N^{1+\varepsilon}$ representatives, we deduce that
\[
 \langle E_\Psi \overline{E}_\Psi^{(N)}, \varpi \rangle \ll N^{1+\varepsilon}\sup_{\substack{x \in \R\\ N^{-2}\ll  y \ll 1}} |\varpi  {E_\Psi^{(N)}}(x+iy)| \ll N^{1+\varepsilon}\sup_{\substack{x \in \R\\ N^{-2}\ll  y \ll 1}} |\varpi(x+iy)|,
\]
where in the last step we used that  $\|E_\Psi\|_{\infty} \ll 1$, and hence clearly $\|E^{(N)}_\Psi\|_{\infty} \ll 1$. Let us assume that $B$ is a sufficiently large even integer, then we can use Cauchy--Schwarz to bound 
\[
\int^{\ast}_{(N)} | \langle E_\Psi \overline{E}_\Psi^{(N)}, \varpi \rangle |  \frac{{\rm d} \varpi}{(1 + |\mu_{\varpi}|)^{B} }\ll N^{3/2+\varepsilon}\Big(\sup_{\substack{x \in \R\\ N^{-2}\ll  y \ll 1}}\int^{\ast}_{(N)} |\varpi(x+iy)|^2  \frac{{\rm d} \varpi}{1 + \mu_{\varpi}^B}\Big)^{1/2},
\]
since by Weyl's law $\int^{\ast}_{(N)} (1 + |\mu_{\varpi}|)^{-B} \, {\rm d} \varpi \ll N^{1+\varepsilon}$ for $B>2$. Note that $\mu \mapsto 1/(1 + \mu^B)$ is holomorphic for $\Im \mu < 1$ and positive for $\mu\in \R \cup [-i/2, i/2]$. The Harish-Chandra transform of this function is rapidly decaying. We apply the pretrace formula to the remaining spectral term, getting the bound
\[
\int^{\ast}_{(N)} | \langle E_\Psi \overline{E}_\Psi^{(N)}, \varpi \rangle |  \frac{{\rm d} \varpi}{(1 + |\mu_{\varpi}|)^{B} }\ll_C N^{3/2+\varepsilon}\Big(\sup_{\substack{x \in \R\\ N^{-2}\ll  y \ll 1}} \sum_{\gamma \in \Gamma_0(N)} \Big\| \Big(\begin{matrix}y & x \\ & 1\end{matrix}\Big)^{-1} \gamma \Big(\begin{matrix}y & x \\ & 1\end{matrix}\Big) \Big\|^{-C} \Big)^{1/2}.
\]
If we write $\gamma=\begin{psmallmatrix} a & b\\ c & d \end{psmallmatrix}$, then 
\[
\Big(\begin{matrix}y & x \\ & 1\end{matrix}\Big)^{-1} \gamma \Big(\begin{matrix}y & x \\ & 1\end{matrix}\Big) = \Big( \begin{matrix} a - cx & * \\ c y & d + c x\end{matrix}\Big).
\]
For $y \gg N^{-2}$,  $N \mid c$ and $\det \gamma = 1$ (which determines $b$ once $a, c, d$ are chosen), it is not hard to see that the sum over $\gamma$ is (for $C$ sufficiently large) bounded by $O(N)$, so that \eqref{spectral-error} is $\ll D^{-\delta} N^{A+2+\varepsilon}$. 

We now turn to the evaluation of the contribution of the constant function, given by
\[
\frac{1 }{V(N)} \int_{Y_0(N)} E_\Psi(z)  \overline{E}_\Psi(Nz) \frac{3}{\pi}\frac{{\rm d} x \, {\rm d} y}{y^2}.
\]
Again replacing the integral over $Y_0(N)$ with $V(N)$ translates of $Y_0(1)$, the previous display equals
\[
\frac{1 }{ V(N)} \overline{\int_{Y_0(1)} \overline{E}_\Psi(z)    {E}_\Psi(z)|_{\SL_2(\Z) \begin{psmallmatrix} N & \\ & 1\end{psmallmatrix}\SL_2(\Z)}\frac{3}{\pi}\frac{{\rm d} x \, {\rm d} y}{y^2}},
\]
where the slash-operator denotes the action of the double coset operator which we denote by $\sqrt{N} \cdot T_{N, 1}^{\ast}$. The eigenvalue of $T_{N, 1}^{\ast}$ for the Eisenstein series $E(z, s)$ is $\tau^{\ast}_{s - 1/2}(N)$. Then for $\sigma > 1$ we have
\[
\sqrt{N}\cdot  T_{N, 1}^{\ast} E_\Psi(z) = \sqrt{N}  \int_{(\sigma)} T_{N, 1}^{\ast} E(z, s) \hat{\Psi}(-s) \frac{{\rm d}s}{2\pi i}  =  \sqrt{N}  \int_{(\sigma)} \tau_{s - 1/2}^{\ast}(N) E(z, s)\hat{\Psi}(-s) \frac{{\rm d}s}{2\pi i}.
\]
By unfolding we obtain\begin{displaymath}
\begin{split}
\frac{\sqrt{N} }{ V(N)} \overline{\int_0^{\infty} \overline{\Psi}(y) \int_0^1   \int_{(\sigma)} \tau_{s - 1/2}^{\ast}(N) E(x+iy, s)\hat{\Psi}(-s) \frac{{\rm d}s}{2\pi i}  \, \frac{3}{\pi}\frac{{\rm d} x\, {\rm d} y}{y^2}}.
\end{split}
\end{displaymath}
The constant term computation yields
\[
\int_0^{\infty} \overline{\Psi}(y) \int_0^1 E(x+iy, s) \frac{3}{\pi}\frac{{\rm d} x\, {\rm d} y}{y^2} =  \int_0^{\infty}  \overline{\Psi}(y)  (y^s + \phi(s) y^{1-s}) \frac{3}{\pi}\frac{{\rm d} y}{y^2}  = \frac{3}{\pi}\left(\hat{\overline{\Psi}}(s-1) + \phi(s)  \hat{\overline{\Psi}}(-s)\right),
\]
which completes the proof of Theorem \ref{thm12} in the Eisenstein case upon noting that $\tau^{\ast}_{it}(N) = \tau^{\ast}_{-it}(N) = \overline{\tau^{\ast}_{it}(N)}$, $\overline{\hat{\bar{\Psi}}(-1/2 -it)} = \hat{\Psi}(-1/2 + it)$ and $\overline{\phi(1/2 - it)} = \phi(1/2 + it)$. \qed

\section{Proof of Theorem \ref{thm:unconditional} : the equivariant case}\label{sec:mollification}

The goal of this section is to prove part \eqref{boundmoment} of Theorem \ref{thm:unconditional}, where $\nu=1$. Recall that in this case, the hypothesis of Theorem \ref{thm11} is that $\bB_1\not\simeq\bB_2$. Without loss of generality, we may assume that $\bB_1\not\simeq {\rm Mat}_2$, so that $f_1$ is cuspidal.

\subsection{Heuristic}\label{sec:heuristic2}
The initial observation is that one can first introduce complex weights, or \textit{mollifiers}, to make the two factors in \eqref{parseval} almost parallel before applying the Cauchy--Schwarz inequality. The weights will then remember the correlation between the two factors in the subsequent analysis. 

In our situation, ``almost parallel" will be too much to ask for, since we do not have access to the phase of the $\chi$-twisted Heegner periods $W_{\mathscr{D}_i}(f_i,\chi)$, which, in particular, depends on the choice of base point $x_i$. Instead, inspired by \cite{RS}, we insert \textit{positive real-valued} mollifiers $\mathcal{M}(\chi)^{\pm 1/2}\in\R_{>0}$ to make the two factors roughly of the same size in absolute value. An application of Cauchy--Schwarz then yields
\[
P_{\mathscr{D}_1,\mathscr{D}_2}^{\Delta}(f_1\otimes f_2)\leq \sum_{\chi\in \ClD^\wedge}\left(|W_{\mathscr{D}_1}(f_1,\chi)|^2 \mathcal{M}(\chi)\right)^{1/2}\left(|W_{\mathscr{D}_2}(f_2,\chi)|^2 \mathcal{M}(\chi)^{-1}\right)^{1/2}.
\]
The better $\mathcal{M}(\chi)$ approximates $|W_{\mathscr{D}_2}(f_2,\chi)|/|W_{\mathscr{D}_1}(f_1,\chi)|$, the less information we have lost; we are sacrificing any control on the correlation of the phases. See \cite{CCLR} for a recent variation.

The construction of such mollifiers is guided by the Waldspurger formula, which relates $|W_{\mathscr{D}_i}(f_i,\chi)|^2$ (for $f_i$ cuspidal) to $L$-functions, as in Remark \ref{reciprocity}. The essential information that we extract from this formula is that $|W_{\mathscr{D}_i}(f_i,\chi)|^2$ should behave like a short Euler product. We emphasize, however, that it is only in this auxiliary choice of weight that $L$-functions, or approximations to them, play a role: we leave untouched the $\chi$-twisted Heegner periods $|W_{\mathscr{D}_i}(f_i,\chi)|^2$, in anticipation of an application of the Parseval relation \eqref{momentM1}, and the subsequent use of spectral theory.

We shall take two different mollifiers, according to whether both $f_1$ and $f_2$ are cuspidal, or only $f_1$ is cuspidal and $f_2=E_\Psi$. This corresponds with the subdivision into symmetric or asymmetric situations of Section \ref{secoff}. In the symmetric case, we take for the mollifier $\mathcal{M}(\chi)$ an expression of the form
\[
 \exp\Big( -\frac{1}{2} \sum_{C_0\leq N\mathfrak{p} \leq D^c} \frac{\lambda_{\pi_1}(N\mathfrak{p})\chi(\mathfrak{p})}{N\mathfrak{p}^{1/2}} +  \frac{1}{2} \sum_{C_0\leq N\mathfrak{p} \leq D^c} \frac{\lambda_{\pi_2}(N\mathfrak{p})\chi(\mathfrak{p})}{N\mathfrak{p}^{1/2}}\Big)
\]
for suitable constants $C_0,c>0$. From Waldspurger's formula, applied to both $\chi$-twisted Heegner periods $W_{\mathscr{D}_1}(f_1,\chi)$ and $W_{\mathscr{D}_2}(f_2,\chi)$, as well as an approximation argument \cite[Lemma 6]{BB}, it follows that the above Euler product should act as an effective approximation to $|W_{\mathscr{D}_2}(f_2,\chi)|/|W_{\mathscr{D}_1}(f_1,\chi)|$. In fact, to make use of this substitute, care must be taken to work entirely with Dirichlet polynomials, which leads to the use of truncated exponentials as well as a multi-piece mollifier to deal with the truncation error at different ideal-norm scales. 

In the asymmetric situation, when $f_2=E_\Psi$, there is a price to pay for working with incomplete Eisenstein series:  the Fourier coefficients are not multiplicative and therefore not suitable for mollification. We therefore change our mollification scheme and take for $\mathcal{M}(\chi)$ an expression of the form
\[
\exp\Big( -\alpha \sum_{C_0\leq N\mathfrak{p} \leq D^c} \frac{\lambda_{\pi_1}(N\mathfrak{p})\chi(\mathfrak{p})}{N\mathfrak{p}^{1/2}} \Big),
\]
for some parameter $0 < \alpha < 1/2$. By the Waldspurger formula, this serves as an approximation to $|W_{\mathscr{D}_1}(f_1,\chi)|^{-2\alpha}$. Arguing as above, we will ultimately produce a bound expressed in terms of the Dirichlet polynomial $T_D(\pi_1)$, defined in Section \ref{secoff}. As was mentioned in Remark \ref{rmks-upper-bds}, we would like to take $\alpha = 1/2$ and obtain half the savings as in the cuspidal case, reflecting the fact that our mollifier carries only half the information. The proof of Lemma \ref{calpha}, however, requires the flexibility of choosing $\alpha$ a bit smaller.

We now outline the contents of this section. In Sections \ref{mollifier-intro}-\ref{sec:mollifier-parameters}, we carefully choose parameters in our mollifier. We then apply in Section \ref{sec:BBK-input} the twisted first moments of Theorem \ref{thm12}. Finally, since the majorant in Theorem \ref{thm:unconditional} is a short Euler product, it will remain to convert our estimates from short Dirichlet polynomials to short Euler products. We do this in Sections \ref{sec-SEP}-\ref{sec:end-of-proof}, most notably by employing Rankin's trick.

\subsection{Introduction of mollifiers}\label{mollifier-intro}
For a non-negative integer $\ell$ and a real number $x$ we let
\[
E_\ell(x)=\sum_{j=0}^\ell \frac{x^j}{j!}
\]
denote the truncated exponential. While the functions $E_\ell$ are no longer multiplicative, the following lower bound provides a useful substitute.

\begin{lemma}\label{lemma:e-E}
Let $\ell$ be a non-negative even natural number. Then
\[
E_{\ell}(x)E_{\ell}(-x)\geq 1
\]
for all $x\in\R$.
\end{lemma}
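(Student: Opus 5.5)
The plan is to write $g(x)=E_\ell(x)E_\ell(-x)$ and compute it as a polynomial in $x^2$. Since $E_\ell(-x)=\sum_{j\le \ell}(-x)^j/j!$, the product is an even polynomial of degree $2\ell$ with constant term $1$. Expanding, the coefficient of $x^m$ is $\frac{1}{m!}\sum_{j+k=m,\;j,k\le \ell}\binom{m}{j}(-1)^k$.

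For $m\le \ell$ the constraint $j,k\le\ell$ is vacuous, so this is the full binomial sum $(1-1)^m=0$ for $m\ge1$. Hence
\[
g(x)=1+\sum_{m=\ell+1}^{2\ell} c_m x^m ,
\]
with $c_m=0$ for odd $m$ by evenness. The whole problem is therefore to show that $\sum_{m=\ell+1}^{2\ell}c_mx^m\ge 0$.

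For $m>\ell$ the missing terms are those with $j>\ell$ or $k>\ell$, so
\[
m!\,c_m=-\sum_{j>\ell}\binom{m}{j}(-1)^{m-j}-\sum_{k>\ell}\binom{m}{k}(-1)^{k}.
\]
For even $m$ the two sums coincide, giving $m!\,c_m=-2\sum_{j=\ell+1}^{m}(-1)^j\binom{m}{j}$. Using the partial alternating-sum identity $\sum_{j\le r}(-1)^j\binom{m}{j}=(-1)^r\binom{m-1}{r}$, the tail equals $-(-1)^{\ell}\binom{m-1}{\ell}=-\binom{m-1}{\ell}$, since $\ell$ is even. Therefore
\[
c_m=\frac{2}{m!}\binom{m-1}{\ell}>0\qquad\text{for even } m\in(\ell,2\ell].
\]
So $g(x)=1+(\text{nonnegative combination of even powers } x^m)\ge1$.

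The main point needing care is the sign in the partial alternating binomial sum, and this is exactly where the parity of $\ell$ enters. For odd $\ell$ the coefficients would be negative and the inequality fails; for instance $\ell=1$ gives $1-x^2$. An alternative check is the closed form $\frac{d}{dx}E_\ell(x)=E_{\ell-1}(x)=E_\ell(x)-x^\ell/\ell!$. From it,
\[
g'(x)=\frac{x^\ell}{\ell!}\bigl(E_\ell(x)-E_\ell(-x)\bigr),
\]
and for even $\ell$ one has $x^\ell\ge0$ and $\operatorname{sign}(E_\ell(x)-E_\ell(-x))=\operatorname{sign}(x)$, the difference being $2\sum_{j\ \mathrm{odd}}x^j/j!$. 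So $g$ is minimized at $0$, where $g(0)=1$. I would likely present this derivative argument as the cleaner proof.
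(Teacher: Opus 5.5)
Your proposal is correct. The coefficient computation is essentially the paper's proof. The paper also shows that the coefficient of $x^m$ vanishes for $1\le m\le \ell$ and then evaluates the surviving coefficients, summing the truncated range directly and telescoping with Pascal's rule. You instead subtract the two missing tails from $(1-1)^m$ and cite the partial alternating-sum identity, which is exactly what that telescoping proves. Both arrive at the same closed form $c_{\ell+t}=\frac{2}{(\ell+t)\,\ell!\,(t-1)!}$ for even $t$.

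Your derivative argument is a genuinely different and shorter route. From $E_\ell'=E_\ell-x^\ell/\ell!$ you get $g'(x)=\frac{x^\ell}{\ell!}\bigl(E_\ell(x)-E_\ell(-x)\bigr)$, which has the sign of $x$. So $g$ decreases on $(-\infty,0]$, increases on $[0,\infty)$, and attains its minimum $g(0)=1$. This needs no binomial identities and makes the role of parity transparent: for odd $\ell$ the same computation gives $g'(x)=-\frac{x^\ell}{\ell!}\bigl(E_\ell(x)+E_\ell(-x)\bigr)$, hence $g\le 1$.

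The coefficient route yields more, namely that $g-1$ is a polynomial in $x^2$ with nonnegative coefficients. The paper only uses $g\ge 1$ (in Corollary \ref{cor:CS}), however, so either proof suffices.
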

\begin{proof}
The product $E_{\ell}(x)E_{\ell}(-x)$ is clearly an even polynomial function of $x$, of degree $2\ell$, with constant term $1$, regardless of the parity of the integer $\ell$. To investigate the signs of the coefficients we write
\[
E_{\ell}(x)E_{\ell}(-x)=\sum_{n=0}^\ell a_{2n} x^{2n},\quad\textrm{where}\quad a_{2n}=\sum_{\substack{j+k=2n\\ j,k\leq \ell}} \frac{(-1)^k}{j!k!}.
\]
Observe that $a_{2n}=(1-1)^{2n}=0$ when $2n\leq \ell$. The remaining coefficients are of the form
\[
a_{\ell+t}=\sum_{\nu=0}^{\ell-t} \frac{(-1)^{t+\nu}}{(\ell-\nu)!(t+\nu)!}\qquad (1\leq t\leq \ell,\;\; t\equiv \ell\!\!\!\!\mod 2),
\]
where we have put $\nu=\ell-j=t-k$. By telescoping the binomial coefficients, the above expression equals
\begin{align*}
\frac{1}{(t+\ell)!}\sum_{\nu=0}^{\ell-t}& \frac{(-1)^{t+\nu}(t+\ell)!}{(\ell-\nu)!(t+\nu)!}=\frac{1}{(t+\ell)!}\sum_{\nu=0}^{\ell-t} (-1)^{t+\nu}\binom{t+\ell}{\ell-\nu}\\
&=\frac{1}{(t+\ell)!}\sum_{\nu=0}^{\ell-t} (-1)^{t+\nu}\left\{\binom{t+\ell-1}{\ell-\nu}+\binom{t+\ell-1}{\ell-\nu-1}\right\}\\
&=\frac{1}{(t+\ell)!}\left\{(-1)^t\binom{t+\ell-1}{\ell}+(-1)^\ell\binom{t+\ell-1}{t-1}\right\}=\frac{(-1)^t+(-1)^\ell}{(t+\ell)(t-1)!\ell!}.
\end{align*}
Now $\ell$ and $t$ have the same parity, so this coefficient is positive or negative according to whether $\ell$ is even or odd, respectively.
\end{proof}

For a collection of Dirichlet polynomials $\{\mathcal{P}(\chi)\}_\chi$ we introduce the weighted sum
\begin{equation}\label{mollified-first-moment}
M_D(f_i,\mathcal{P})=\sum_{\chi\in \ClD^\wedge}|W_{\mathscr{D}_i}(f_i,\chi)|^2\mathcal{P}(\chi).
\end{equation}
When expressed in terms of $L$-functions, similarly to \eqref{eq:twisted-first-moment}, $M_D(f_i,\mathcal{P})$ is seen to be the mollified first moment.

\begin{corollary}\label{cor:CS}
For $R\in\N$ let $\{\mathcal{Q}_1(\chi),\ldots ,\mathcal{Q}_R(\chi)\}_{\chi}$ be a collection of $R$-tuples of real-valued Dirichlet polynomials, indexed by $\chi\in \ClD^\wedge$. For $i=1,2$ and $\ell_1,\ldots ,\ell_R\in\N$ let 
\[
\mathcal{P}^{(i)}(\chi)=\prod_{j=1}^R E_{\ell_j}((-1)^i \mathcal{Q}_j(\chi)).
\]
Then
\[
P_{\mathscr{D}_1,\mathscr{D}_2}^{\Delta}(f_1\otimes f_2) \leq M_D(f_1,\mathcal{P}^{(1)})^{1/2} M_D(f_2,\mathcal{P}^{(2)})^{1/2}.
\]
\end{corollary}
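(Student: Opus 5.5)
Starting from the Parseval identity \eqref{parseval} with $\nu=1$, we write
\[
P_{\mathscr{D}_1,\mathscr{D}_2}^{\Delta}(f_1\otimes f_2)=\sum_{\chi\in\ClD^\wedge} W_{\mathscr{D}_1}(f_1;\chi)W_{\mathscr{D}_2}(f_2;\overline{\chi}).
\]
The strategy is to insert the factor $1\le \mathcal{P}^{(1)}(\chi)\mathcal{P}^{(2)}(\chi)$ under the absolute values and then apply Cauchy--Schwarz. The first step, bounding the left side by $\sum_\chi |W_{\mathscr{D}_1}(f_1;\chi)|\,|W_{\mathscr{D}_2}(f_2;\overline\chi)|$, is simply the triangle inequality. (The left side is real, or else we read the inequality for its absolute value.)

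Next we check that $\mathcal{P}^{(1)}(\chi)\mathcal{P}^{(2)}(\chi)\ge 1$ and that each factor is positive. Since each $\mathcal{Q}_j(\chi)$ is real, Lemma \ref{lemma:e-E} gives, for even $\ell_j$,
\[
E_{\ell_j}(-\mathcal{Q}_j(\chi))\,E_{\ell_j}(\mathcal{Q}_j(\chi))\ge 1.
\]
Taking the product over $j$ yields $\mathcal{P}^{(1)}(\chi)\mathcal{P}^{(2)}(\chi)\ge 1$. For positivity, note that a truncated exponential of even degree is strictly positive on $\R$. Indeed, $E_\ell$ has no real zeros: at a critical point $x_0$ we have $E_\ell'(x_0)=E_{\ell-1}(x_0)=0$, hence $E_\ell(x_0)=x_0^\ell/\ell!\ge 0$, with equality only if $x_0=0$, which is impossible since $E_\ell(0)=1$. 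Its minimum is therefore positive. So both $\mathcal{P}^{(i)}(\chi)>0$. Here I take the $\ell_j$ to be even, as in the intended application; this hypothesis is implicit in the statement, which needs Lemma \ref{lemma:e-E}.

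We can then write, for every $\chi$,
\[
|W_{\mathscr{D}_1}(f_1;\chi)|\,|W_{\mathscr{D}_2}(f_2;\overline\chi)|\le \big(|W_{\mathscr{D}_1}(f_1;\chi)|^2\mathcal{P}^{(1)}(\chi)\big)^{1/2}\big(|W_{\mathscr{D}_2}(f_2;\overline\chi)|^2\mathcal{P}^{(2)}(\chi)\big)^{1/2},
\]
and apply Cauchy--Schwarz over $\chi$. The first resulting sum is exactly $M_D(f_1,\mathcal{P}^{(1)})$. For the second sum, we reindex $\chi\mapsto\overline\chi$; this is a bijection of $\ClD^\wedge$. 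After the reindexing the weight becomes $\mathcal{P}^{(2)}(\overline\chi)$. This is the only mildly delicate point. I would handle it either by noting that the Dirichlet polynomials used (sums of $\chi(\mathfrak p)$ over all prime ideals of a given norm) are invariant under $\chi\mapsto\overline\chi$, since complex conjugation of ideals sends $\chi(\mathfrak p)$ to $\overline\chi(\mathfrak p)$ and $\chi(\overline{\mathfrak p})=\overline{\chi(\mathfrak p)}$. Alternatively, one can simply build the indexing convention into the definition of $\mathcal{P}^{(2)}$. Either way, the second sum is $M_D(f_2,\mathcal{P}^{(2)})$, which finishes the argument; no step here should pose a real obstacle.
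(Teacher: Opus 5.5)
Your proposal is correct and follows the paper's proof. Lemma \ref{lemma:e-E} gives $\mathcal{P}^{(1)}(\chi)\mathcal{P}^{(2)}(\chi)\ge 1$, you insert this termwise into \eqref{parseval}, and you finish with Cauchy--Schwarz over $\chi$. The paper leaves three points implicit, and you handle each of them correctly: the $\ell_j$ must be even, each $\mathcal{P}^{(i)}(\chi)$ must be positive so the square roots make sense, and the $\overline{\chi}$ in \eqref{parseval} can be reindexed because the weights are invariant under $\chi\mapsto\overline{\chi}$.
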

\begin{proof}
From Lemma \ref{lemma:e-E} it follows that $\mathcal{P}^{(1)}(\chi)\mathcal{P}^{(2)}(\chi)\geq 1$. Hence
\[
P_{\mathscr{D}_1,\mathscr{D}_2}^{\Delta}(f_1\otimes f_2)\leq \sum_{\chi\in \ClD^\wedge}\left(|W_{\mathscr{D}_1}(f_1,\chi)|^2\mathcal{P}^{(1)}(\chi)\right)^{1/2}\left(|W_{\mathscr{D}_2}(f_2,\chi)|^2\mathcal{P}^{(2)}(\chi)\right)^{1/2}.
\]
We conclude by the Cauchy--Schwarz inequality. 
\end{proof}

\subsection{Construction of mollifiers}\label{sec:mollifier-const}
Let $R$ be a positive integer. Let $P_1,\ldots ,P_R$ be disjoint finite sets of primes, which we assume are all split in $E$. From now on we assume in addition when both $f_1, f_2$ are cuspidal (the ``symmetric situation" of Section \ref{secoff}) that all sets $P_j$ consist only of primes $p$ satisfying $\max(|\lambda_{\pi_1}(p)|, |\lambda_{\pi_2}(p)|) \leq  B$ for a suitable (fixed) constant $B$ to be chosen later, while when $f_2=E_\Psi$ (the ``asymmetric situation") all sets $P_j$ consist only of primes $p$ satisfying $ |\lambda_{\pi_1 }(p)| \leq B$. Let $P_j^E$ denote the sets of prime ideals of $\mathscr{O}_E$ for which ${\rm N}\mathfrak{p}\in P_j$. Let $I_j^E$ denote the integral ideals of $\mathscr{O}_E$ supported on prime ideals in $P_j^E$.  We write $I_j=\{{\rm N}\mathfrak{n}: \mathfrak{n}\in I_j^E\}$ for the positive integers supported on primes $p\in P_j$. We will adopt the notational convention of writing $\mathfrak{n}_j$ (resp.\ $n_j$) for an arbitrary element in $I_j^E$ (resp.\ $I_j$).

For an integral ideal $\mathfrak{n}\subset\mathscr{O}_E$, we let $\omega(\mathfrak{n})$ denote the (not completely) multiplicative function defined on powers of prime ideals by $\omega(\mathfrak{p}^\alpha)=\alpha!$. Let $\Omega(\mathfrak{n})$ denote the number of prime ideals appearing in the decomposition of $\mathfrak{n}$, counted with multiplicity. Let
\begin{equation}\label{defn:ab}
a_{\pi_1,\pi_2}(p)=\lambda_{\pi_1}(p)-\lambda_{\pi_2}(p).
\end{equation}
Write $\tilde{a}_{\pi_1,\pi_2}(n)$ for the totally multiplicative function defined at primes by $a_{\pi_1,\pi_2}(p)$, and  $\tilde{a}_{\pi}(n)$ for the totally multiplicative function defined at primes by $\lambda_{\pi}(p)$.

For a function $F: I_j^E\rightarrow \C$ (such as the character $\chi(\mathfrak{n})$ or the normalized Hecke eigenvalues $\Lambda_\pi(\mathfrak{n})$ or $\Lambda_{it}(\mathfrak{n})$, defined in \eqref{Lambda-pi}) we put
\begin{equation}\label{def:D1}
\mathcal{P}_j^{(i)}(F)=\sum_{\substack{\mathfrak{n}\in I_j^E\\ \Omega(\mathfrak{n})\leq \ell_j}}\frac{\tilde{a}_{\pi_1,\pi_2}({\rm N}\mathfrak{n})}{\sqrt{{\rm N}\mathfrak{n}}}\left(\frac{(-1)^i}{2}\right)^{\Omega(\mathfrak{n})} \frac{1}{\omega(\mathfrak{n})}F(\mathfrak{n})
\end{equation}
and, for a parameter $\alpha\in (0,1)$, we put
\begin{equation}\label{def:D2}
\mathcal{P}_{j,\alpha}^{(i)}(F)=\sum_{\substack{\mathfrak{n}\in I_j^E\\ \Omega(\mathfrak{n})\leq \ell_j}}\frac{\tilde{a}_{\pi}({\rm N}\mathfrak{n})}{\sqrt{{\rm N}\mathfrak{n}}} ((-1)^i \alpha )^{\Omega(\mathfrak{n})} \frac{1}{\omega(\mathfrak{n})}F(\mathfrak{n}).
\end{equation}
Finally, put $\mathcal{P}^{(i)}(F)=\prod_{j=1}^R\mathcal{P}_j^{(i)}(F)$ and $\mathcal{P}_{\alpha}^{(i)}(F)=\prod_{j=1}^R\mathcal{P}_{j,\alpha}^{(i)}(F)$. 

The following lemma should be compared to the discussion in Section \ref{sec:heuristic2}.

\begin{lemma}\label{lem:short-Dirichlet}
For a class group character $\chi$ of $\ClD$ and $j=1,\ldots ,R$, we have
\[
\mathcal{P}_j^{(i)}(\chi)=E_{\ell_j}\big((-1)^i\mathcal{Q}_j(\chi)\big)\quad\textrm{and}\quad \mathcal{P}_{j,\alpha}^{(i)}(\chi)=E_{\ell_j} \big((-1)^i \mathcal{Q}_{j,\alpha}(\chi)\big),
\]
where
\[
\mathcal{Q}_j(\chi)=\frac{1}{2}\sum_{\mathfrak{p}\in P_j^E}\frac{a_{\pi_1,\pi_2}({\rm N}\mathfrak{p})\chi(\mathfrak{p})}{{\rm N}\mathfrak{p}^{1/2}}\quad\textrm{and}\quad \mathcal{Q}_{j,\alpha}(\chi)=\alpha\sum_{\mathfrak{p}\in P_j^E}\frac{\lambda_{\pi}({\rm N}\mathfrak{p})\chi(\mathfrak{p})}{{\rm N}\mathfrak{p}^{1/2}}.
\]
\end{lemma}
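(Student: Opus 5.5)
The plan is to expand the truncated exponential $E_{\ell_j}$ of $\mathcal{Q}_j(\chi)$ by the multinomial theorem and to match the resulting terms bijectively with the ideals appearing in \eqref{def:D1}. The two identities have the same shape, so it suffices to treat $\mathcal{P}_j^{(i)}(\chi)$. The second identity then follows verbatim by replacing the weight $\tfrac12 a_{\pi_1,\pi_2}(p)$ with $\alpha\lambda_\pi(p)$ and $\tilde a_{\pi_1,\pi_2}$ with $\tilde a_\pi$.

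First, I would write $(-1)^i\mathcal{Q}_j(\chi)=\sum_{\mathfrak{p}\in P_j^E} x_{\mathfrak{p}}$, where
\[
x_{\mathfrak{p}}=\frac{(-1)^i}{2}\,\frac{a_{\pi_1,\pi_2}({\rm N}\mathfrak{p})\chi(\mathfrak{p})}{{\rm N}\mathfrak{p}^{1/2}}.
\]
This is a finite sum because $P_j$ is a finite set of split primes. For each $0\le k\le \ell_j$, the multinomial theorem gives
\[
\frac{1}{k!}\Big(\sum_{\mathfrak{p}} x_{\mathfrak{p}}\Big)^k=\sum_{\substack{(\alpha_{\mathfrak{p}})\\ \sum\alpha_{\mathfrak{p}}=k}}\prod_{\mathfrak{p}}\frac{x_{\mathfrak{p}}^{\alpha_{\mathfrak{p}}}}{\alpha_{\mathfrak{p}}!}.
\]
Summing over $k\le\ell_j$ produces $E_{\ell_j}((-1)^i\mathcal{Q}_j(\chi))$ as a sum over exponent vectors $(\alpha_{\mathfrak{p}})$ with $\sum_{\mathfrak{p}}\alpha_{\mathfrak{p}}\le \ell_j$.

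Second, I would use unique factorization of ideals in $\mathscr{O}_E$: exponent vectors correspond bijectively to ideals $\mathfrak{n}=\prod\mathfrak{p}^{\alpha_{\mathfrak{p}}}\in I_j^E$, and under this correspondence $\sum\alpha_{\mathfrak{p}}=\Omega(\mathfrak{n})$ and $\prod\alpha_{\mathfrak{p}}!=\omega(\mathfrak{n})$. It remains to identify the product $\prod_{\mathfrak{p}} x_{\mathfrak{p}}^{\alpha_{\mathfrak{p}}}$ term by term:
\begin{enumerate}
\item the sign and the factor $\tfrac12$ combine to $((-1)^i/2)^{\Omega(\mathfrak{n})}$;
\item since $\chi$ is a character, $\prod\chi(\mathfrak{p})^{\alpha_{\mathfrak{p}}}=\chi(\mathfrak{n})$;
\item since the norm is multiplicative, $\prod {\rm N}\mathfrak{p}^{\alpha_{\mathfrak{p}}/2}={\rm N}\mathfrak{n}^{1/2}$;
\item since $\tilde a_{\pi_1,\pi_2}$ is totally multiplicative, $\prod a_{\pi_1,\pi_2}({\rm N}\mathfrak{p})^{\alpha_{\mathfrak{p}}}=\tilde a_{\pi_1,\pi_2}({\rm N}\mathfrak{n})$.
\end{enumerate}

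The main subtlety lies in the last step, since distinct prime ideals $\mathfrak{p},\bar{\mathfrak{p}}$ can lie above the same rational prime $p$. This causes no difficulty: each $\mathfrak{p}$ contributes the factor $a(p)$ separately, and total multiplicativity of $\tilde a$ on rational integers absorbs the repeated factors. The point to check is that $\tilde a$ is defined to be totally multiplicative, not merely multiplicative; the paper defines it this way, which is what makes the identity exact. Apart from this, the argument is bookkeeping, and I expect no real obstacle.
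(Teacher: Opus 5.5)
Your proposal is correct and follows the paper's proof: expand the truncated exponential $E_{\ell_j}$, apply the multinomial theorem to each power, and match exponent vectors with ideals $\mathfrak{n}\in I_j^E$ satisfying $\Omega(\mathfrak{n})\le\ell_j$ via unique factorization. The coefficients then assemble into the terms of \eqref{def:D1} because $\chi$ and the norm are multiplicative and $\tilde a_{\pi_1,\pi_2}$ (respectively $\tilde a_\pi$) is totally multiplicative.
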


\begin{proof} Note that the Dirichlet polynomials $\mathcal{Q}_j$, $\mathcal{Q}_{j,\alpha}$ are real-valued since the coefficients are real and depend only on ${\rm N}\mathfrak{p}$. 
With the above notation we compute
\begin{align*}
E_{\ell_j}\Big((-1)^i \mathcal{Q}_j(\chi)\Big)&=E_{\ell_j}\bigg((-1)^i\frac{1}{2} \sum_{\mathfrak{p}\in P_j^E}\frac{a_{\pi_1,\pi_2}({\rm N}\mathfrak{p})\chi(\mathfrak{p})}{\sqrt{{\rm N}\mathfrak{p}}}\bigg)\\
&=\sum_{f=0}^{\ell_j} \frac{1}{f!}\bigg(\frac{(-1)^i}{2}\sum_{\mathfrak{p}\in P_j^E}\frac{a_{\pi_1,\pi_2}({\rm N}\mathfrak{p}) \chi(\mathfrak{p})}{\sqrt{{\rm N}\mathfrak{p}}}\bigg)^f\\
&=\sum_{f=0}^{\ell_j} \frac{1}{f!}\left(\frac{(-1)^i}{2}\right)^f\sum_{\substack{\mathfrak{n}\in I_j^E\\\Omega(\mathfrak{n})= f}}\frac{f!}{\omega(\mathfrak{n})}\frac{\tilde{a}_{\pi_1,\pi_2}({\rm N}\mathfrak{n})\chi(\mathfrak{n})}{\sqrt{{\rm N}\mathfrak{n}}},
\end{align*}
where we have used the multinomial theorem. Switching the order of summation then proves the first identity of the lemma, and the second is proved similarly.  
\end{proof}

\subsection{Choosing parameters}\label{sec:mollifier-parameters}
Let $c>0$ be a small absolute constant. We shall suppose that
\begin{equation}\label{c-size}
 0<c\leq\min(1/16,  (\delta/4A)^2),
\end{equation}
where $A\geq 1/2$ majorizes the values of $A$ and $\delta\in (0,1/4]$ minorizes the values of $\delta$ in Theorem \ref{thm12}. Moreover, we assume
\begin{equation}\label{c-size2}
(70 B^2 \log (c^{-1/2}))^2 \leq 1/c^{1/2}.
\end{equation}

We shall make the following choice of parameters, which will depend on $c>0$ and another fixed constant $C_0 > 0$ that excludes the first few primes.

\begin{definition}\label{def-parameters}
For $-D$ a large fundamental discriminant, we let
\medskip
\begin{enumerate}
\item\label{ell-defn} $\{\ell_j\}_{j\geq 1}$ be defined by $\ell_1=2\lfloor 30 B^2 \log\log D\rfloor$, and $\ell_{j+1}=2\lfloor 30 B^2 \log \ell_j\rfloor$ for $j\geq 1$;
\vspace{0.1cm}
\item\label{R-size} $R$ be the largest natural number with $\ell_R>1/\sqrt{c}$;
\vspace{0.1cm}
\item\label{Pj-interval} $P_1$ be the set of primes, split in $E$, lying in the interval $[C_0, D^{1/\ell_1^2}]$, and for $2\leq j\leq R$ let $P_j$ be the set of primes, split in $E$, lying in the interval $(D^{1/\ell_{j-1}^2},D^{1/\ell_j^2}]$, where we always restrict to primes satisfying $\max(|\lambda_{\pi_1}(p) |, |\lambda_{\pi_2}(p)|) \leq B$ in the definition of $\mathcal{Q}_j$, resp.\  $|\lambda_{\pi}(p)|\leq B$ in the definition of $\mathcal{Q}_{j,\alpha}$. 
\end{enumerate}
\end{definition}
We will choose $C_0 > B$ sufficiently large, but fixed, so that all ramified primes of $\pi_1, \pi_2$ are below $C_0$. 

\begin{lemma}\label{lem:ell-bds} Let $\{\ell_j\}_{j\geq 1}$ be the sequence of positive even integers given by Definition \ref{def-parameters}.\eqref{ell-defn}.
\begin{enumerate}
\item We have $\ell_j>\ell_{j+1}^2$. In particular, the sequence $\ell_j$ is monotone decreasing for such $j$ and, for $D$ sufficiently large, the integer $R$ in Definition \ref{def-parameters}.\eqref{R-size} is well-defined. 
\item\label{ell-sum} We have $1/\ell_1+\cdots +1/\ell_R<2\sqrt{c}$.
\end{enumerate}
\end{lemma}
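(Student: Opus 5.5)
Both parts rest on the elementary comparison between $x$ and $(\log x)^2$. Throughout write $K=30B^2$, so that $\ell_{j+1}=2\lfloor K\log \ell_j\rfloor\le 2K\log\ell_j$. By \eqref{c-size2} we have $70B^2\log(c^{-1/2})\le c^{-1/4}$. Since $c\le 1/16$, this forces $K$ to be small compared with any power of $c^{-1/2}$.

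For part (1), fix $j<R$, or more generally any $j$ with $\ell_{j+1}>1/\sqrt c$; these are the only indices we care about, and then $\ell_j\ge\ell_{j+1}$ will follow. The goal $\ell_j>\ell_{j+1}^2$ reduces to the inequality $\ell_j>4K^2(\log\ell_j)^2$. This holds as soon as $\ell_j$ exceeds a threshold $x_0$ with $x_0\asymp K^2(\log K)^2$. I will check that $1/\sqrt c$ already exceeds this threshold, using \eqref{c-size2}. Concretely, put $y=c^{-1/2}$. Then \eqref{c-size2} gives $K\log y\le \tfrac{30}{70}y^{1/2}$. One then verifies that $x=y$ satisfies $x>4K^2(\log x)^2$, since $4K^2(\log y)^2\le 4\cdot(3/7)^2 y<y$. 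The function $x/(\log x)^2$ is increasing for $x>e^2$, so the same inequality holds for every $x\ge y$.

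It remains to see that the indices involved have $\ell_j\ge y$. For $j\le R$ this holds by the definition of $R$. I will also handle $\ell_1$ directly: for large $D$, $\ell_1\approx 60B^2\log\log D$ is large. Monotonicity then holds as long as the terms stay above $y$. The sequence cannot stay above $y$ forever, since $\ell_{j+1}\le 2K\log\ell_j$ eventually drops to size $O(K\log K)$, and one checks that $O(K\log K)<y$. Hence $R$ is well defined, and $R\ge1$ for $D$ large. Some care is needed for the index $j=R$ itself: there I only claim $\ell_R>\ell_{R+1}^2$, which follows from $\ell_R>y$ by the same computation.

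For part (2), use $\ell_{j}>\ell_{j+1}^2$ backwards. This gives $\ell_{R-k}>\ell_R^{2^k}> y^{2^k}$. Therefore
\[
\sum_{j=1}^R \frac{1}{\ell_j}<\sum_{k\ge0} y^{-2^k}\le \frac1y\Big(1+\frac1y+\frac1{y^3}+\cdots\Big)<\frac{2}{y}=2\sqrt c,
\]
where the last step uses $y\ge4$. The main obstacle is the bookkeeping of constants in part (1): one must make sure that \eqref{c-size2} really places $1/\sqrt c$ beyond the threshold $x_0$ despite the floor functions and the factor $2$ in the definition of $\ell_j$. I would handle this by the explicit substitution $x=y$ described above, together with the monotonicity of $x/(\log x)^2$.
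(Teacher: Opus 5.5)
Your proposal is correct and follows the paper's route. The paper simply asserts that part (1) follows from \eqref{c-size2}, which is exactly what your comparison $4K^2(\log y)^2\le \frac{36}{49}y<y$ makes explicit, and part (2) is the same computation $\ell_{R-k}>\ell_R^{2^k}$ with $\ell_R>c^{-1/2}$, summed as a geometric series. Two small points need tightening. First, $\ell_j>y$ for all $j\le R$ is not literally the definition of $R$; it follows from the one-line check that $\ell_j\le y$ implies $\ell_{j+1}\le 2K\log y\le \frac{6}{7}y^{1/2}<y$, so $\{j:\ell_j>y\}$ is an initial segment. Second, your monotonicity argument for $x/(\log x)^2$ needs $y\ge e^2$, which follows from \eqref{c-size2} together with $y\ge 4$ and $B\ge 2$.
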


\begin{proof}
The first claim follows from \eqref{c-size2}. 

It follows from this and Definition \ref{def-parameters}.\eqref{R-size}, that
\[
1/\ell_R+\cdots +1/\ell_1\leq \ell_R^{-1}+\sum_{j=1}^{R-1} \ell_R^{-2j}=\ell_R^{-1}\left(1+\frac{1-\ell_R^{-2(R-1)}}{\ell_R-\ell_R^{-1}}\right)<\sqrt{c}\left(1+\frac{1}{c^{-1/2}-1}\right).
\]
Since $c\in (0,1/4]$ (which follows from assumption \eqref{c-size}), this is $<2\sqrt{c}$.\end{proof}


\begin{lemma}\label{lem:Dir-length}
The Dirichlet polynomials $\mathcal{P}^{(i)}(\chi), \mathcal{P}_{\alpha}^{(i)}(\chi)$ are of length at most $D^{2\sqrt{c}}$.
\end{lemma}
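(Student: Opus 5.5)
We bound the length factor by factor and then multiply. By definition, $\mathcal{P}^{(i)}(\chi)=\prod_{j=1}^R\mathcal{P}_j^{(i)}(\chi)$, and by \eqref{def:D1} each factor $\mathcal{P}_j^{(i)}(\chi)$ is a Dirichlet polynomial supported on ideals $\mathfrak{n}\in I_j^E$ with $\Omega(\mathfrak{n})\leq \ell_j$. The same holds for $\mathcal{P}_{\alpha}^{(i)}(\chi)$ via \eqref{def:D2}; only the coefficients differ, not the support. So the support of the product consists of ideals $\mathfrak{n}=\mathfrak{n}_1\cdots\mathfrak{n}_R$ with $\mathfrak{n}_j\in I_j^E$ and $\Omega(\mathfrak{n}_j)\leq\ell_j$. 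Its length is the maximum of ${\rm N}\mathfrak{n}=\prod_j{\rm N}\mathfrak{n}_j$ over this support.

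For the $j$-th factor, every prime ideal $\mathfrak{p}\in P_j^E$ has norm ${\rm N}\mathfrak{p}\in P_j$. By Definition \ref{def-parameters}\eqref{Pj-interval}, this gives ${\rm N}\mathfrak{p}\leq D^{1/\ell_j^2}$. This holds for $j=1$ as well, since the interval $[C_0,D^{1/\ell_1^2}]$ is nonempty for large $D$. Hence any $\mathfrak{n}_j$ with $\Omega(\mathfrak{n}_j)\leq\ell_j$ satisfies
\[
{\rm N}\mathfrak{n}_j\leq \big(D^{1/\ell_j^2}\big)^{\ell_j}=D^{1/\ell_j}.
\]

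Multiplying over $j=1,\ldots,R$ gives ${\rm N}\mathfrak{n}\leq D^{1/\ell_1+\cdots+1/\ell_R}$. By Lemma \ref{lem:ell-bds}\eqref{ell-sum}, the exponent is less than $2\sqrt{c}$, which is the claim.

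The argument has no real obstacle. The only points to check are that the interval endpoints in Definition \ref{def-parameters} are exactly $D^{1/\ell_j^2}$, so the exponent telescopes to $1/\ell_j$, and that restricting to primes with bounded Hecke eigenvalues only shrinks the support. Lemma \ref{lem:ell-bds} has already done the numerical work, since it is where the rapid decay $\ell_j>\ell_{j+1}^2$ forces the sum $\sum_j 1/\ell_j$ to be controlled by its last term, of size about $\sqrt{c}$.
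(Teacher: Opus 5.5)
Your proof is correct and is essentially the paper's argument: each factor has length at most $(D^{1/\ell_j^2})^{\ell_j}=D^{1/\ell_j}$, and multiplying over $j$ and invoking Lemma~\ref{lem:ell-bds}.\eqref{ell-sum} gives the bound $D^{2\sqrt{c}}$. The only cosmetic difference is that you read the support directly off \eqref{def:D1}--\eqref{def:D2}, while the paper instead uses Lemma~\ref{lem:short-Dirichlet} to view $\mathcal{P}_j^{(i)}(\chi)$ as $E_{\ell_j}$ of a Dirichlet polynomial of length $D^{1/\ell_j^2}$.
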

\begin{proof}
From Lemma \ref{lem:short-Dirichlet} and Definition \ref{def-parameters}.\eqref{Pj-interval} it follows that $\mathcal{P}_j^{(i)}(\chi)=E_{\ell_j}((-1)^i \mathcal{Q}_j(\chi))$ is of length at most $(D^{1/\ell_j^2})^{\ell_j}=D^{1/\ell_j}$. Thus $\mathcal{P}^{(i)}(\chi)$ is of length at most $D^{1/\ell_1+\cdots +1/\ell_R}$, which, by Lemma \ref{lem:ell-bds}.\eqref{ell-sum}, is bounded by $D^{2\sqrt{c}}$. The result for $\mathcal{P}_{\alpha}^{(i)}(\chi)$ is similar. 
\end{proof}

\subsection{Asymptotic evaluation of mollified moments}\label{sec:BBK-input}

We now use Theorem \ref{thm12} to asymptotically evaluate the mollified first moments 
\[
M_D(f_i,\mathcal{P}^{(i)}) \quad (i=1,2),\quad M_D(f_1,\mathcal{P}_{\alpha}^{(1)}), \quad M_D(E_\Psi,\mathcal{P}_{\alpha}^{(2)}),
\]
defined in \eqref{mollified-first-moment}. Doing so will effectively replace the function $\chi(\mathfrak{n})$ with the normalized Hecke eigenvalues $\Lambda_{\pi_i}(\mathfrak{n})$ or $\Lambda_{it}(\mathfrak{n})$ in the Dirichlet polynomials \eqref{def:D1}-\eqref{def:D2}.

\begin{proposition}\label{cor:asymp-M1}
There is $\beta>0$ such that
\begin{displaymath}
\begin{split}
M_D(f_i,\mathcal{P}^{(i)})&= \mathcal{P}^{(i)}(\Lambda_{\pi_i}) +O(D^{-\beta})\qquad (i=1,2),\\
M_D(f_1,\mathcal{P}_{\alpha}^{(1)})&=  \mathcal{P}_{\alpha}^{(1)}(\Lambda_\pi) +O(D^{-\beta}),\\
M_D(E_\Psi,\mathcal{P}_{\alpha}^{(2)})&= \int_{\R}  \mathcal{P}_{\alpha}^{(2)}(\Lambda_{it}) \Phi(t) \frac{{\rm d} t}{2\pi} +O(D^{-\beta}),
\end{split}
\end{displaymath}
where $\Phi$ is defined in \eqref{def:Phi}.
\end{proposition}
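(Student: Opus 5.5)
The plan is to expand each mollifier as a Dirichlet polynomial in $\chi(\mathfrak{n})$, interchange summations, and recognize the inner $\chi$-sum as a twisted first moment evaluated by Theorem \ref{thm12}. Concretely, by Lemma \ref{lem:short-Dirichlet} and the definitions \eqref{def:D1}--\eqref{def:D2}, we have $\mathcal{P}^{(i)}(\chi)=\prod_{j=1}^R\mathcal{P}^{(i)}_j(\chi)=\sum_{\mathfrak{n}}c^{(i)}(\mathfrak{n})\chi(\mathfrak{n})$. Here $\mathfrak{n}=\mathfrak{n}_1\cdots\mathfrak{n}_R$ with $\mathfrak{n}_j\in I_j^E$ and $\Omega(\mathfrak{n}_j)\le \ell_j$, and the coefficients are
\[
c^{(i)}(\mathfrak{n})=\prod_j \frac{\tilde{a}_{\pi_1,\pi_2}({\rm N}\mathfrak{n}_j)}{\sqrt{{\rm N}\mathfrak{n}_j}}\Big(\frac{(-1)^i}{2}\Big)^{\Omega(\mathfrak{n}_j)}\frac{1}{\omega(\mathfrak{n}_j)}.
\]
The sets $P_j$ are disjoint, so this factorization is unique and $\mathcal{P}^{(i)}(F)=\sum_{\mathfrak{n}}c^{(i)}(\mathfrak{n})F(\mathfrak{n})$ for any $F$. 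The analogous expansion holds for $\mathcal{P}^{(i)}_\alpha$. Inserting this expansion into \eqref{mollified-first-moment} gives
\[
M_D(f_i,\mathcal{P}^{(i)})=\sum_{\mathfrak{n}}c^{(i)}(\mathfrak{n})\sum_{\chi}\chi(\mathfrak{n})|W_{\mathscr{D}_i}(f_i,\chi)|^2=\sum_{\mathfrak{n}}c^{(i)}(\mathfrak{n})\,P^\Delta_{\mathscr{D}_i}(f_i\otimes f_i;\mathfrak{n}),
\]
which is exactly \eqref{cusp} with $\psi$ trivial.

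Each $\mathfrak{n}$ occurring here is supported on split primes $p\ge C_0$, and $C_0$ exceeds the primes dividing $d_{\bB_i}$. Such $\mathfrak{n}$ are therefore coprime to $d_{\bB}$; they are also coprime to $D$, since split primes are unramified. So Theorem \ref{thm12} applies and gives $P^\Delta_{\mathscr{D}_i}(f_i\otimes f_i;\mathfrak{n})=\Lambda_{\pi_i}(\mathfrak{n})+O(({\rm N}\mathfrak{n}^*)^AD^{-\delta})$ in the cuspidal case, and $\int\Lambda_{it}(\mathfrak{n})\Phi(t)\,\frac{{\rm d}t}{2\pi}+O(\cdots)$ for $E_\Psi$. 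Summing the main terms reconstitutes $\mathcal{P}^{(i)}(\Lambda_{\pi_i})$, respectively $\mathcal{P}^{(i)}_\alpha(\Lambda_\pi)$. In the Eisenstein case the sum over $\mathfrak{n}$ is finite, so it can be exchanged with the $t$-integral, giving $\int\mathcal{P}^{(2)}_\alpha(\Lambda_{it})\Phi(t)\,\frac{{\rm d}t}{2\pi}$.

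The remaining step, and the only real obstacle, is bounding the accumulated error
\[
D^{-\delta}\sum_{\mathfrak{n}}|c^{(i)}(\mathfrak{n})|\,({\rm N}\mathfrak{n})^{A}.
\]
By Lemma \ref{lem:Dir-length}, every $\mathfrak{n}$ in the support has ${\rm N}\mathfrak{n}\le D^{2\sqrt c}$. The coefficients satisfy $|\tilde a(\mathfrak{n})|\le (2B)^{\Omega(\mathfrak{n})}$, because the primes are restricted by $|\lambda|\le B$; also $1/\omega\le 1$ and ${\rm N}\mathfrak{n}^{-1/2}\le 1$. Hence the error is at most
\[
D^{-\delta}\,D^{2A\sqrt c}\,\prod_j\sum_{\Omega(\mathfrak{n}_j)\le\ell_j}(2B)^{\Omega(\mathfrak{n}_j)}.
\]
Each of these sums is bounded crudely by the total count of ideals of norm at most $D^{1/\ell_j}$, weighted by $(2B)^{\ell_j}$. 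The number of ideals of norm $\le x$ is $\ll x\log x$, so the product is at most $D^{2\sqrt c+o(1)}(2B)^{\sum\ell_j}$. Since $\sum_j\ell_j\ll B^2\log\log D$, the factor $(2B)^{\sum\ell_j}=D^{o(1)}$. The total error is therefore $\ll D^{-\delta+2(A+1)\sqrt c+o(1)}$. By \eqref{c-size} we have $\sqrt c\le \delta/4A$; with $A\ge 1/2$ this gives $2(A+1)\sqrt c\le \delta(A+1)/(2A)\le 3\delta/4$ (the constraint may need adjusting slightly if the ${\rm N}\mathfrak{n}^{-1/2}$ saving is not used). This yields $O(D^{-\beta})$ with, say, $\beta=\delta/8$. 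The same argument handles all three statements of Proposition \ref{cor:asymp-M1} uniformly, using $|\alpha|<1$ in place of $1/2$.
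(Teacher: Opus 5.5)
Your argument follows the paper's proof: expand the mollifier via Lemma \ref{lem:short-Dirichlet}, interchange sums to get $\sum_{\mathfrak n}c^{(i)}(\mathfrak n)P^\Delta_{\mathscr D_i}(f_i\otimes f_i;\mathfrak n)$, apply Theorem \ref{thm12} termwise, and bound the error trivially using the length $D^{2\sqrt c}$. Your error bookkeeping (coefficient sizes, $(2B)^{\sum_j\ell_j}=D^{o(1)}$, counting ideals) is more explicit than the paper's one-line estimate. There is, however, a misjustified step in the main term.

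Your claim that $\mathcal{P}^{(i)}(F)=\sum_{\mathfrak n}c^{(i)}(\mathfrak n)F(\mathfrak n)$ ``for any $F$'' is false. By definition $\mathcal{P}^{(i)}(F)=\prod_j\mathcal{P}^{(i)}_j(F)$, which expands to $\sum_{\mathfrak n}c^{(i)}(\mathfrak n)\prod_jF(\mathfrak n_j)$. After Theorem \ref{thm12} your main term is $\sum_{\mathfrak n}c^{(i)}(\mathfrak n)\Lambda_{\pi_i}(\mathfrak n_1\cdots\mathfrak n_R)$. To identify it with $\mathcal{P}^{(i)}(\Lambda_{\pi_i})$ you must prove $\Lambda_{\pi_i}(\mathfrak n_1\cdots\mathfrak n_R)=\prod_j\Lambda_{\pi_i}(\mathfrak n_j)$; this is the factorization step the paper explicitly singles out. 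It is not automatic, because $\Lambda_\pi$ is not multiplicative on coprime ideals. For instance, $\Lambda_\pi(\mathfrak p\bar{\mathfrak p})=\Lambda_\pi(\mathscr O_E)=1$, whereas $\Lambda_\pi(\mathfrak p)\Lambda_\pi(\bar{\mathfrak p})=p\lambda_\pi(p)^2/(p+1)^2$. The identity does hold here, for the following reasons:
\begin{itemize}
\item each $I^E_j$ is closed under conjugation, and distinct blocks lie over disjoint sets of rational primes;
\item hence $(\mathfrak n_1\cdots\mathfrak n_R)^*=\prod_j\mathfrak n_j^*$, with pairwise coprime norms;
\item $m\mapsto\sqrt m\,\lambda^*_\pi(m)/V(m)$ is multiplicative.
\end{itemize}
The same argument with $\tau^*_{it}$ gives the factorization of $\Lambda_{it}$ for each fixed $t$, before you integrate against $\Phi$.

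A smaller slip is in the exponent arithmetic. The inequality $\delta(A+1)/(2A)\le 3\delta/4$ requires $A\ge 2$, not $A\ge 1/2$; at $A=1/2$ it gives $3\delta/2$. Even if you keep the saving ${\rm N}\mathfrak n^{-1/2}$, your exponent is only $\le -\delta/2+\delta/(4A)+o(1)$, which vanishes at $A=1/2$. This is harmless. Theorem \ref{thm12} remains true with $A$ enlarged, so you may take $A\ge 2$ in \eqref{c-size}, or equivalently shrink $c$. The paper's own estimate $\sum_{{\rm N}\mathfrak n\le D^{2\sqrt c}}{\rm N}\mathfrak n^A\ll D^{2A\sqrt c}$ omits the number of terms and implicitly needs the same adjustment.
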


\begin{proof}
Recall that 
\[
M_D(f_i,\mathcal{P}^{(i)})=\sum_{\chi\in \ClD^\wedge}|W_{\mathscr{D}_i}(f_i,\chi)|^2\prod_{j=1}^R E_{\ell_j}\big((-1)^i \mathcal{Q}_j(\chi)\big).
\]
Lemma \ref{lem:short-Dirichlet} then shows that
\[
M_D(f_i,\mathcal{P}^{(i)})=\sum_{\substack{\mathfrak{n}_1\in I_1^E\\ \Omega(\mathfrak{n}_1)\leq \ell_1}} \cdots \sum_{\substack{\mathfrak{n}_R\in I_R^E\\ \Omega(\mathfrak{n}_R)\leq \ell_R}}\frac{\tilde{a}_{\pi_1,\pi_2}({\rm N}\mathfrak{n}_1 \cdots \mathfrak{n}_R)}{\sqrt{{\rm N}\mathfrak{n}_1 \cdots \mathfrak{n}_R}}\left(\frac{(-1)^i}{2}\right)^{\Omega(\mathfrak{n}_1\cdots\mathfrak{n}_R)}\frac{P_{\mathscr{D}_i}^\Delta(f_i\otimes f_i;\mathfrak{n}_1\cdots \mathfrak{n}_R)}{\omega(\mathfrak{n}_1)\cdots \omega(\mathfrak{n}_R)}. 
\]
An application of Theorem \ref{thm12} then yields the main term 
$$\sum_{\substack{\mathfrak{n}_1\in I_1^E\\ \Omega(\mathfrak{n}_1)\leq \ell_1}} \cdots \sum_{\substack{\mathfrak{n}_R\in I_R^E\\ \Omega(\mathfrak{n}_R)\leq \ell_R}}\frac{\tilde{a}_{\pi_1,\pi_2}({\rm N}\mathfrak{n}_1 \cdots \mathfrak{n}_R)}{\omega(\mathfrak{n}_1)\cdots \omega(\mathfrak{n}_R)\sqrt{{\rm N}\mathfrak{n}_1 \cdots \mathfrak{n}_R}}\left(\frac{(-1)^i}{2}\right)^{\Omega(\mathfrak{n}_1\cdots\mathfrak{n}_R)}\Lambda_{\pi_i}(\mathfrak{n}_1\cdots \mathfrak{n}_R)$$
 plus an error of the form
\[
O\bigg(D^{-\delta}\sum_{N\mathfrak{n} \leq D^{2\sqrt{c}} } N\mathfrak{n}^A \bigg)=O(D^{-\delta+2A\sqrt{c}}),
\]
where we have used the trivial bounds for the coefficients of the Dirichlet polynomial $\mathcal{P}_i(\chi)$  as well as the bound on the length of the Dirichlet polynomial in Corollary \ref{lem:Dir-length}. Note that the exponent is $-\delta+2A\sqrt{c}\leq -\delta/2$, by \eqref{c-size}. The main term factorizes since $\mathfrak{n}_{j_1} \overline{\mathfrak{n}}_{j_1}$ and $\mathfrak{n}_{j_1} \overline{\mathfrak{n}}_{j_2}$ are coprime for different indices $j_1, j_2$, and we obtain the desired main term. 

The other two formulas follow in the same way. 
\end{proof}

\subsection{Short Euler products}\label{sec-SEP}

The restriction according to the number of prime factors $\Omega(n)$ in the Dirichlet polynomials \eqref{def:D1}-\eqref{def:D2} makes it difficult to compare the various formulae in Proposition \ref{cor:asymp-M1} with the majorant in Theorem \ref{thm:unconditional}, which, by contrast, is a short Euler product. 

With this in mind, for  $i\in\{1,2\}$, $1\leq j\leq R$, we let
\[
\mathcal{E}_j^{(i)}= \sum_{ \mathfrak{n}_j\in I_j^E }  \frac{\tilde{a}_{\pi_1,\pi_2}({\rm N}\mathfrak{n}_j)}{\omega(\mathfrak{n}_j)\sqrt{{\rm N}\mathfrak{n}_j}}\left(\frac{(-1)^i}{2}\right)^{\Omega(\mathfrak{n}_j)}\Lambda_{\pi_i}(\mathfrak{n}_j),
\]
and
\[
\mathcal{E}_{j,\alpha}= \sum_{ \mathfrak{n}_j\in I_j^E }  \frac{\tilde{a}_{\pi}({\rm N}\mathfrak{n}_j)}{\omega(\mathfrak{n}_j)\sqrt{{\rm N}\mathfrak{n}_j}}(-\alpha)^{\Omega(\mathfrak{n}_j)}\Lambda_\pi(\mathfrak{n}_j).\]
Moreover, we put $\mathcal{E}^{(i)}=\prod_{j=1}^R\mathcal{E}_j^{(i)}$ and $\mathcal{E}_{\alpha}=\prod_{j=1}^R\mathcal{E}_{j,\alpha}$. These extended sums, in which the condition on $\Omega(n)$ has been removed, can be more easily compared to the majorant in Theorem \ref{thm:unconditional}, as the following lemma attests.

\begin{lemma}\label{lemma-EP} For $1 \leq j \leq R$ we have 
\begin{displaymath}
\begin{split}
\mathcal{E}_j^{(1)}\mathcal{E}_j^{(2)}& = \prod_{p \in P_j} \Big(1 -  \frac{ ( \lambda_{\pi_2}(p) - \lambda_{\pi_1}(p))^2}{2p} + O\Big(\frac{1}{p^2}\Big)\Big), \\
 \mathcal{E}_{j,\alpha} &= \prod_{p \in P_j} \Big(1 + \frac{(\alpha^2 - 2 \alpha) \lambda_{\pi}(p)^2}{p} + O\Big(\frac{1}{p^2}\Big)\Big). 
 \end{split}
 \end{displaymath}
with an absolute implied constant. 

\end{lemma}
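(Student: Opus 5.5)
The plan is to factor each extended sum $\mathcal{E}_j^{(i)}$, $\mathcal{E}_{j,\alpha}$ as an Euler product over the split primes $p\in P_j$ and then compute the local factors to first order in $1/p$. Since every $p\in P_j$ splits, write $p\mathscr{O}_E=\mathfrak{p}\bar{\mathfrak{p}}$. Every $\mathfrak{n}_j\in I_j^E$ factors uniquely as $\prod_{p\in P_j}\mathfrak{p}^{a}\bar{\mathfrak{p}}^{b}$. The weights $\tilde a({\rm N}\mathfrak{n})$, ${\rm N}\mathfrak{n}^{-1/2}$, $(\pm 1/2)^{\Omega}$ and $1/\omega$ are all multiplicative over distinct prime ideals. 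For $\Lambda_\pi(\mathfrak{n})$ we use the primitive kernel $\mathfrak{n}^\ast=\prod \mathfrak{p}^{a-\min(a,b)}\bar{\mathfrak{p}}^{b-\min(a,b)}$. Its norm is $\prod p^{|a-b|}$, and $\sqrt{N}\lambda^\ast_\pi(N)/V(N)$ is multiplicative in $N$ over coprime arguments. So $\Lambda_\pi$ is multiplicative across distinct rational primes. The sums converge absolutely, because $|\lambda_\pi(p)|\le B$ on $P_j$ and $\tilde a$ grows at most like $(2B)^{\Omega}$, while $1/\omega$ supplies factorial decay. Hence
\[
\mathcal{E}_j^{(i)}=\prod_{p\in P_j}\sum_{a,b\geq 0}\frac{\tilde a_{\pi_1,\pi_2}(p)^{a+b}}{a!\,b!\,p^{(a+b)/2}}\Big(\frac{(-1)^i}{2}\Big)^{a+b}\Lambda_{\pi_i}(\mathfrak{p}^a\bar{\mathfrak{p}}^b).
\]

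Next I would expand each local factor up to an error of $O(p^{-2})$, uniformly in $p$. The needed values of $\Lambda$ are:
\begin{itemize}
\item $(a,b)=(0,0)$ gives $1$.
\item $(a,b)=(1,0)$ and $(0,1)$ each give $\Lambda(\mathfrak{p})=\sqrt{p}\lambda(p)/(p+1)=\lambda(p)p^{-1/2}(1+O(1/p))$.
\item $(1,1)$ has primitive kernel $(1)$, so $\Lambda=1$.
\item $(2,0)$ gives $\sqrt{p^2}\lambda^\ast(p^2)/V(p^2)\ll 1/p$.
\end{itemize}
In general the $(a,b)$ term is $\ll_B C^{a+b}p^{-(a+b)/2-|a-b|/2}/(a!b!)$. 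So every term with $a+b\geq 3$, and also $(2,0)$ and $(0,2)$, is $O(p^{-2})$ with a constant depending only on $B$. Collecting the surviving terms gives
\[
1+\frac{(-1)^i\tilde a(p)\lambda_{\pi_i}(p)}{p}+\frac{\tilde a(p)^2}{4p}+O(p^{-2}).
\]

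Then I multiply the $i=1$ and $i=2$ factors and use $\tilde a=\lambda_{\pi_1}-\lambda_{\pi_2}$. The first-order term is
\[
\frac{a(\lambda_{\pi_2}-\lambda_{\pi_1})}{p}+\frac{a^2}{2p}=-\frac{a^2}{p}+\frac{a^2}{2p}=-\frac{a^2}{2p}.
\]
Cross terms are $O(p^{-2})$, which gives the first identity.

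For $\mathcal{E}_{j,\alpha}$ the computation is identical, with weight $(-\alpha)^{a+b}$ and $\tilde a=\lambda_\pi$. The local factor is $1-2\alpha\lambda_\pi(p)^2/p+\alpha^2\lambda_\pi(p)^2/p+O(p^{-2})$. Here the $(1,1)$ term is $\alpha^2\lambda^2/p$, with $1/(1!1!)$ and $\Lambda=1$. This is the claim.

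The main obstacle is the uniformity of the $O(p^{-2})$ error. For the implied constant to be absolute, or at least depend only on $B$, one needs $|\lambda(p)|\le B$ on $P_j$, which holds by construction. One also needs a uniform bound $\lambda^\ast_\pi(p^k)\ll (k+1)C^k$; this follows from $|\lambda(p)|\le B$ through the Hecke recursion. The factorial decay from $\omega$ then controls the tail sum over $a+b\ge 3$.
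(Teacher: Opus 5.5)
Your proposal is correct and follows essentially the paper's argument: you factor over the split primes $p\in P_j$ via $\mathfrak{n}=\prod\mathfrak{p}^{a}\bar{\mathfrak{p}}^{b}$ with ${\rm N}\mathfrak{n}^\ast=\prod p^{|a-b|}$, keep the terms with $\max(a,b)\le 1$, and bound the rest by $O(p^{-2})$ using $|\lambda(p)|\le B$ and the factorial decay from $1/\omega$. The only differences are cosmetic. You expand each $\mathcal{E}_j^{(i)}$ separately and then multiply the local factors, whereas the paper multiplies first and expands a single quadruple sum over $(r_1,s_1,r_2,s_2)$. You also rightly note that the implied constant actually depends on the fixed $B$.
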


\begin{proof} Expressing the sums in terms of Euler products we have that $\mathcal{E}_j^{(1)}\mathcal{E}_j^{(2)}$ is
\begin{displaymath}
\begin{split}
&\sum_{ \mathfrak{n}_j\in I_j^E }  \sum_{ \mathfrak{m}_j\in I_j^E }  \frac{\tilde{a}_{\pi_1,\pi_2}({\rm N}\mathfrak{n}_j)}{\omega(\mathfrak{n}_j)\sqrt{{\rm N}\mathfrak{n}_j}} \frac{(-1)^{\Omega(\mathfrak{n}_j)}}{2^{\Omega(\mathfrak{n}_j\mathfrak{m}_j)}}  \Lambda_{\pi_1}(\mathfrak{n}_j) \frac{\tilde{a}_{\pi_1,\pi_2}({\rm N}\mathfrak{m}_j)}{\omega(\mathfrak{m}_j)\sqrt{{\rm N}\mathfrak{m}_j}}\Lambda_{\pi_2}(\mathfrak{m}_j)\\
& = \prod_{p \in P_j} \sum_{r_1, s_1, r_2, s_2 = 0}^{\infty}  \frac{\tilde{a}_{\pi_1, \pi_2}(p^{r_1+s_1+r_2+s_2}) \lambda^{\ast}_{\pi_1}(p^{|r_1-s_1|})\lambda^{\ast}_{\pi_2}(p^{|r_2-s_2|}) (-1)^{r_1+s_1}}{p^{ \max(r_1, s_1) + \max(r_2, s_2)}2^{r_1+s_1+r_2+s_2}  r_1!s_1!r_2!s_2! (1 + \delta_{r_1 \neq s_1} p^{-1})(1 + \delta_{r_2 \neq s_2} p^{-1})}.
\end{split}
\end{displaymath}
Here we decomposed ideals $\mathfrak{n}_j$ into prime powers $\mathfrak{p}^{r_1} \overline{\mathfrak{p}}^{s_1}$ and similarly $\mathfrak{m}_j$ into prime powers $\mathfrak{p}^{r_2} \overline{\mathfrak{p}}^{s_2}$
The contribution of $ \max(r_1, s_1) + \max(r_2, s_2) = 1$ is
\begin{displaymath}
\begin{split}
&-\frac{\tilde{a}_{\pi_1, \pi_2}(p) \lambda^{\ast}_{\pi_1}(p)}{2p}-\frac{\tilde{a}_{\pi_1, \pi_2}(p) \lambda^{\ast}_{\pi_1}(p)}{2p}+\frac{\tilde{a}_{\pi_1, \pi_2}(p^2)}{4p}\\
&\quad\quad\quad  +\frac{\tilde{a}_{\pi_1, \pi_2}(p) \lambda^{\ast}_{\pi_2}(p)}{2p}+\frac{\tilde{a}_{\pi_1, \pi_2}(p) \lambda^{\ast}_{\pi_2}(p)}{2p}+\frac{\tilde{a}_{\pi_1, \pi_2}(p^2)}{4p}  + O\Big(\frac{1}{p^2}\Big)\\
& =  \frac{\tilde{a}_{\pi_1, \pi_2}(p)( \lambda_{\pi_2}(p) - \lambda_{\pi_1}(p))}{p}+\frac{\tilde{a}_{\pi_1, \pi_2}(p^2)}{2p} + O\Big(\frac{1}{p^2}\Big) = - \frac{ ( \lambda_{\pi_2}(p) - \lambda_{\pi_1}(p))^2}{p}+\frac{\tilde{a}_{\pi_1, \pi_2}(p)^2}{2p} + O\Big(\frac{1}{p^2}\Big)\\
&= - \frac{ ( \lambda_{\pi_2}(p) - \lambda_{\pi_1}(p))^2}{2p}  + O\Big(\frac{1}{p^2}\Big)
\end{split}
\end{displaymath}
where we recall \eqref{defn:ab}. 
The contribution of $ \max(r_1, s_1) + \max(r_2, s_2) > 1$ can be bounded as above by
\[
\leq  \sum_{k + \ell \geq 2}  \frac{(2B^2)^k}{p^k k!}  \frac{(2B^2)^l}{p^l l!}  = \sum_{m = 2}^{\infty} \frac{(4B^2)^m}{p^m m!} \ll \frac{1}{p^2}.
\]
This concludes the proof of the first identity. 

By the same argument we have 
\begin{equation*}
\begin{split}
\mathcal{E}_{j,\alpha} &= \sum_{ \mathfrak{n}_j\in I_j^E }   \frac{\tilde{a}_{\pi}({\rm N}\mathfrak{n}_j) (-\alpha)^{\Omega(\mathfrak{n}_j)}}{\omega(\mathfrak{n}_j)\sqrt{{\rm N}\mathfrak{n}_j}} \Lambda_{\pi}(\mathfrak{n}_j) = \prod_{p \in P_j} \sum_{r, s}^{\infty}  \frac{\tilde{a}_{\pi}(p^{r+s})  (-\alpha)^{r+s}\lambda^{\ast}_{\pi}(p^{|r-s|}) }{p^{ \max(r, s) }  r !s ! (1 + \delta_{r  \neq s } p^{-1})}\\
&= \prod_{p \in P_j} \Big(1 + \frac{(\alpha^2 - 2 \alpha) \lambda_{\pi}(p)^2}{p} + O\Big(\frac{1}{p^2}\Big)\Big).\qedhere
\end{split}
\end{equation*} \end{proof}

\subsection{Bounding the tails}
Our goal is to replace the short Dirichlet polynomials appearing in the main terms of the asymptotic expressions in Proposition \ref{cor:asymp-M1} by the short Euler products of Lemma \ref{lemma-EP}. To do so, we must bound the error incurred in removing the condition $\Omega(\mathfrak{n})\leq\ell_j$.

For this reason, for $i\in\{1,2\}$, $1\leq j\leq R$, we define the tail sums
\[\mathcal{T}_j^{(i)}=\mathcal{E}_j^{(i)}-\mathcal{P}^{(i)}(\Lambda_{\pi_i}),\qquad
\mathcal{T}_{j,\alpha}= \mathcal{E}_{j, \alpha}  - \mathcal{P}_{j, \alpha}^{(1)}(\Lambda_\pi).
\]
With this notation, the products of the main terms which result from the application of Cauchy--Schwarz in Lemma \ref{cor:CS} become
 \[
\mathcal{P}^{(1)}(\Lambda_{\pi_1})\mathcal{P}^{(2)}(\Lambda_{\pi_2})
=\prod_{j=1}^R\big(\mathcal{E}_j^{(1)} \mathcal{E}_j^{(2)}  -\mathcal{T}_j^{(1)} \mathcal{E}_j^{(2)} - \mathcal{E}_j^{(1)} \mathcal{T}_j^{(2)} + \mathcal{T}_j^{(1)} \mathcal{T}_j^{(2)}\big),
\]
 while 
$$ \mathcal{P}_{\alpha}^{(1)}(\Lambda_\pi) \int_{\R} \mathcal{P}_{\alpha}^{(2)}(\Lambda_{it}) \Phi(t) \frac{{\rm d} t}{2\pi}=  \int_{\R}\prod_{j=1}^R\big(\mathcal{E}_{j, \alpha} - \mathcal{T}_{j, \alpha}\big) \mathcal{P}_{j, \alpha}^{(2)}(\Lambda_{it})\Phi(t) \frac{{\rm d} t}{2\pi}.$$

To control the various cross terms, we will need the following upper bounds.

\begin{lemma}\label{lem41} For $i =1, 2$, $1 \leq j \leq R$, $0 < \alpha < 1$, $t\in \R$ we have 
\begin{displaymath}
\begin{split}
&|\mathcal{E}_j^{(i)}|, |\mathcal{E}_{j, \alpha}|, |\mathcal{P}_{j, \alpha}^{(2)}(\Lambda_{it}) |  \leq   \exp\Big(\sum_{p \in P_j}\frac{2B^2}{p}\Big), \\
&|\mathcal{T}_j^{(i)}|,  |\mathcal{T}_{j, \alpha}|  \leq 2^{-\ell_j}  \exp\Big(\sum_{p \in P_j}\frac{8B^2}{p}\Big).
\end{split}
\end{displaymath}
\end{lemma}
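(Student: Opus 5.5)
Every quantity in Lemma \ref{lem41} factors over the primes $p\in P_j$ (each split, $p\mathscr{O}_E=\mathfrak{p}\overline{\mathfrak{p}}$). So the plan is to bound each sum by its absolute-value Euler product, prime by prime, exactly as in the proof of Lemma \ref{lemma-EP}. We will use three facts. First, $|\tilde a_{\pi_1,\pi_2}(p^k)|\le (2B)^k$ and $|\tilde a_\pi(p^k)|\le B^k$, since only primes with $|\lambda|\le B$ occur. Second, $|\lambda^*_{\pi}(p^m)|\ll (m+1)p^{7m/64}$ by known bounds towards Ramanujan, or simply $\le 2^m$ combined with the decay $p^{-\max(r,s)}$. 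For the Eisenstein case, $|\tau^*_{it}(p^m)|\le m+2$ since $t$ is real. Third, the normalizing factor satisfies $\sqrt{p^{|r-s|}}/V(p^{|r-s|})\le p^{-|r-s|/2}$.

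\textbf{Bound on $\mathcal{E}$ and $\mathcal{P}^{(2)}_{j,\alpha}(\Lambda_{it})$.} Replace every term by its absolute value. As in the proof of Lemma \ref{lemma-EP}, the local factor at $p$ becomes
\[
\sum_{r,s\ge0}\frac{(2B)^{r+s}\,|\lambda^*(p^{|r-s|})|}{2^{r+s}\,r!\,s!\,p^{\max(r,s)}}.
\]
Here we use $p^{\max(r,s)}=p^{(r+s)/2}p^{|r-s|/2}$, and $p^{|r-s|/2}$ absorbs the growth of $\lambda^*$. The sum is then at most
\[
\sum_{r,s}\frac{(B'/\sqrt p)^{r+s}}{r!\,s!}\cdot\frac{1}{p^{|r-s|/2}}.
\]
Its $r=s=0$ term is $1$. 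Terms with $r+s=1$ carry a factor $p^{-1}$, and the bound $B\ge2$ lets us absorb their constants into $2B^2/p$. The remaining terms are $O(B^4/p^2)$, hence also absorbed into $2B^2/p$ because $C_0>B$ is large. Using $1+x\le e^x$ and taking the product over $p\in P_j$ gives the first line.

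The same computation handles $\mathcal{P}^{(2)}_{j,\alpha}(\Lambda_{it})$, where the restriction $\Omega\le\ell_j$ only removes nonnegative terms of the majorant. One subtlety for this case: the extended sum has $|\tau^*_{it}(p)|\le 2$, so the first-order term is at most $2\alpha B/p\le 2B^2/p$.

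\textbf{Bound on the tails.} This uses Rankin's trick. The tail $\mathcal{T}$ consists of the terms with $\Omega(\mathfrak{n})>\ell_j$. For these, $1\le 2^{\Omega(\mathfrak{n})-\ell_j}$, so
\[
|\mathcal{T}|\le 2^{-\ell_j}\sum_{\mathfrak{n}}2^{\Omega(\mathfrak{n})}|\cdots|.
\]
Inserting $2^{r+s}$ into the local factor above multiplies the first-order constant by $2$ and the second-order terms by at most $4$. Repeating the local estimate then gives $1+8B^2/p$ (generously), and exponentiating gives the second line.

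\textbf{Main obstacle.} The only delicate point is the second-order terms with $|r-s|$ large, where $\lambda^*(p^{|r-s|})$ could be large. The decay $p^{-|r-s|/2}$ against $|\lambda^*(p^m)|\le(m+1)p^{7m/64}$, together with the factorials, controls them uniformly, provided $C_0$ is chosen large enough in terms of $B$ so that $\sum_{k\ge2}$ is dominated by the linear term.
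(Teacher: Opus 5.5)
There is a genuine gap. Your route is the same as the paper's: absolute-value Euler products over $p\in P_j$, then Rankin's trick for the tails. The failing step is your claim that, after the $r+s\le 1$ terms, ``the remaining terms are $O(B^4/p^2)$''. Your own majorant decays like $p^{-\max(r,s)}$, not $p^{-(r+s)}$, so the diagonal terms $r=s=k$ are only of size $p^{-k}$. In particular, $(r,s)=(1,1)$ is the ideal $\mathfrak{p}\overline{\mathfrak{p}}=(p)$, whose primitive kernel is $\mathscr{O}_E$. Hence $\Lambda_{\pi_i}((p))=1$, and the term equals $\tilde a_{\pi_1,\pi_2}(p)^2/(4p)$. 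This is a first-order contribution of size up to $B^2/p$; it is the term $\tilde a_{\pi_1,\pi_2}(p^2)/(4p)$ in the proof of Lemma \ref{lemma-EP}.

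Put $x=B^2/p$. The absolute-value local factor for $\mathcal{E}_j^{(i)}$ is then $\sum_{r,s}x^{\max(r,s)}/(r!\,s!)=1+3x+\tfrac94x^2+\cdots$. The constant $3B^2$ is attained when $\lambda_{\pi_1}(p)=B=-\lambda_{\pi_2}(p)$. Since $1+3x>e^{2x}$ for small $x$, enlarging $C_0$ cannot help: termwise absolute values cannot give the constant $2B^2$. The same undercount affects $\mathcal{E}_{j,\alpha}$, whose majorant coefficient is $(2\alpha+\alpha^2)B^2$, and $\mathcal{P}^{(2)}_{j,\alpha}(\Lambda_{it})$, whose coefficient is $4\alpha B+\alpha^2B^2$ rather than $2\alpha B$. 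So the dangerous terms are the diagonal ones, where $\Lambda$ gives no decay at all, not those with $|r-s|$ large as your last paragraph suggests.

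The paper's display has exactly the same slip. It equates $\sum_{r,s}x^{\max(r,s)}/(r!\,s!)$ with $\sum_k(2x)^k/k!=\sum_{r,s}x^{r+s}/(r!\,s!)$, which is false for $x<1$. Following the paper therefore does not close the gap.

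To get $2B^2$ for the genuine Euler products, keep signs at first order as in Lemma \ref{lemma-EP}. The local factor of $\mathcal{E}_j^{(i)}$ is $1+g_i(p)/p+O(B^4p^{-2})$, where
\[
g_1=-\tfrac14(\lambda_{\pi_1}-\lambda_{\pi_2})(3\lambda_{\pi_1}+\lambda_{\pi_2}),\qquad g_2=\tfrac14(\lambda_{\pi_1}-\lambda_{\pi_2})(\lambda_{\pi_1}+3\lambda_{\pi_2}),
\]
evaluated at $p$. For $\mathcal{E}_{j,\alpha}$ the coefficient is $g=(\alpha^2-2\alpha)\lambda_\pi(p)^2$. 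In every case $|g|\le B^2$ on $[-B,B]^2$, so the local factor has modulus at most $e^{2B^2/p}$ once $C_0\gg B^2$.

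The truncated $\mathcal{P}^{(2)}_{j,\alpha}(\Lambda_{it})$ is not an Euler product, so absolute values are unavoidable there. Still, $4\alpha B+\alpha^2B^2<2B^2$ with room to spare in the range $\alpha<1/4$, $B\ge 2$ that is actually used.

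Your tail bounds are correct, though for a different reason than you give. With the Rankin weight the majorant is $\sum_{r,s}2^{r+s}x^{\max(r,s)}/(r!\,s!)=1+8x+16x^2+\cdots$. The $(1,1)$ term supplies $4x$, so $8B^2$ is exact at first order rather than ``generous''. The remaining coefficients are dominated by those of $e^{8x}$.

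Finally, the exact constant does not matter for the application. With $3B^2$ for $\mathcal{E}$ and $8B^2$ for $\mathcal{T}$, the cross terms in Section \ref{sec:end-of-proof} are still $\ll\exp\big((\tfrac{11}{30}-\log 2)\ell_j\big)$.
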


\begin{proof} 
By our assumption we have $|\lambda^{\ast}_{\pi}(n)| \leq B^{\Omega(n)}$ and $|\tilde{a}_{\pi_1,\pi_2}(n)| \leq (2B)^{\Omega(n)}$, so that 
\begin{displaymath}
\begin{split}
|\mathcal{E}_j^{(i)}| &\leq  \sum_{ \mathfrak{n}_j\in I_j^E }  \frac{(2B)^{\Omega({\rm N}\mathfrak{n}_j)}}{\omega(\mathfrak{n}_j)\sqrt{{\rm N}\mathfrak{n}_j}} \frac{1}{2^{\Omega(\mathfrak{n}_j)}} \frac{ \sqrt{{\rm N}\mathfrak{n}^{\ast}_j}B^{\Omega( \mathfrak{n}^{\ast}_j)}}{V( {\rm N}\mathfrak{n}^{\ast}_j)} \leq \prod_{p \in P_j} \sum_{r, s = 0}^{\infty} \frac{B^{r+s}B^{|r-s|}}{p^{(r+s)/2 } p^{|r-s|/2} r!s!}\\ &= \prod_{p \in P_j} \sum_{r, s = 0}^{\infty} \frac{B^{2\max(r, s)}  }{p^{\max(r, s) }    r!s!}   = \prod_{p \in P_j} \sum_{k = 0}^{\infty} \frac{(2B^2)^{k}  }{p^k k! } =   \exp\Big(\sum_{p \in P_j}\frac{2B^2}{p}\Big).  
\end{split} \end{displaymath}
By the same argument and using in addition Rankin's trick, we have
\begin{displaymath}
\begin{split}
|\mathcal{T}_j^{(i)}| &\leq  \sum_{ \mathfrak{n}_j\in I_j^E }  \frac{(2B)^{\Omega({\rm N}\mathfrak{n}_j)}}{\omega(\mathfrak{n}_j)\sqrt{{\rm N}\mathfrak{n}_j}} \frac{1}{2^{\Omega(\mathfrak{n}_j)}} \frac{\sqrt{{\rm N}\mathfrak{n}^{\ast}_j} B^{\Omega( \mathfrak{n}^{\ast}_j)}}{V({\rm N}\mathfrak{n}^{\ast}_j)} \frac{2^{\Omega(\mathfrak{n}_j)}}{2^{\ell_j}}  \leq 2^{-\ell_j}  \exp\Big(\sum_{p \in P_j}\frac{8B^2}{p}\Big).  
\end{split} \end{displaymath}
Similarly we see
\begin{displaymath}
\begin{split}
|\mathcal{E}_{j, \alpha}| &\leq   \prod_{p \in P_j} \sum_{r, s = 0}^{\infty} \frac{(B\alpha)^{r+s} B^{|r-s|}}{p^{(r+s)/2 } p^{|r-s|/2} r!s!}\leq  \prod_{p \in P_j} \sum_{r, s = 0}^{\infty} \frac{B^{2\max(r, s)}  }{p^{\max(r, s) }    r!s!}     =   \exp\Big(\sum_{p \in P_j}\frac{2B^2}{p}\Big)
\end{split} \end{displaymath}
and the bound for $\mathcal{T}_{j, \alpha}$ follows in the same way by Rankin's trick. For the bounds for $\mathcal{P}_{j, \alpha}^{(2)}(\Lambda_{it})$ we simply note that $|\tau_{it}(N) | \leq  2 \leq B$, so the proof carries over verbatim. 
\end{proof}

The analysis of the integral of $\mathcal{P}_{\alpha}^{(2)}(\Lambda_{it})$ is somewhat different. We note that the following lemma would incur a fatal power of $\log D$ without the presence of the integral over $\R$. It is for this reason that we work with incomplete Eisenstein series $E_\Psi$, which feature this additional integral.

\begin{lemma}\label{calpha} We have
$$\int_{\R} \mathcal{P}_{\alpha}^{(2)}(\Lambda_{it}) \Phi(t) \frac{{\rm d} t}{2\pi}\ll \prod_{j=1}^R \prod_{p \in P_j} \Big(1 + \frac{\alpha^2 \lambda_{\pi}(p)^2}{p} + O\Big(\frac{1}{p^2}\Big)\Big)$$
provided that $\alpha < 1/4$.  
\end{lemma}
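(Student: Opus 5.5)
The plan is to exploit the oscillation of $t\mapsto \Lambda_{it}(\mathfrak{n})$ through the $t$-integral against $\Phi$, after first removing the truncation $\Omega(\mathfrak{n}_j)\le \ell_j$ by positivity. The trivial bound $|\tau^\ast_{it}(p)|\le 2$ cannot work here. In $\mathcal{P}^{(2)}_{\alpha}(\Lambda_{it})$, the ideals $\mathfrak{p}$ and $\overline{\mathfrak{p}}$ above a split prime $p$ contribute a linear term of size about $4\alpha|\lambda_\pi(p)|/p$ (since $\Lambda_{it}(\mathfrak{p})\approx p^{-1/2}\tau_{it}(p)$), which is not dominated by $\alpha^2\lambda_\pi(p)^2/p$. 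So the linear terms must be killed by the $t$-integration, and only the "diagonal" terms, with $r=s$ in $\mathfrak{n}=\mathfrak{p}^r\overline{\mathfrak{p}}^s$, should survive. These diagonal terms give exactly the factor $1+\alpha^2\lambda_\pi(p)^2/p+O(p^{-2})$, because then $\mathfrak{n}^\ast=\mathscr{O}_E$ and $\Lambda_{it}(\mathfrak{n})=1$.

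First I apply Cauchy--Schwarz with respect to the measure $|\Phi(t)|\,{\rm d}t$, which has finite mass:
\[
\Big|\int_{\R}\mathcal{P}^{(2)}_\alpha(\Lambda_{it})\Phi(t)\,\tfrac{{\rm d}t}{2\pi}\Big|^2\ll \int_{\R}|\mathcal{P}^{(2)}_\alpha(\Lambda_{it})|^2\, w(t)\,{\rm d}t,
\]
where $w\ge |\Phi|$ is a fixed positive majorant with non-negative, rapidly decaying Fourier transform $\widehat w$. Such a $w$ exists since $\Phi$ decays rapidly; for instance take $w$ a suitable multiple of a sum of Fej\'er-type kernels. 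I then open the square and expand each $\tau^\ast_{it}({\rm N}\mathfrak{n}^\ast)$ via \eqref{div} as a sum of terms $\mu(d)d^{-1}(a/b)^{it}$. This produces a finite sum of terms $c\cdot \widehat w(\log x)$ with $x$ rational. Because $\widehat w\ge 0$ is bounded, I may bound every term by absolute values.

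The key step is to separate the terms with $x=1$ from those with $x\neq 1$. For $x\ne1$, write $x=u/v$ in lowest terms with all prime factors at least $C_0$; then $|\log x|\ge \log(1+1/\min(u,v))$. This must be combined with Rankin's trick (as in Lemma \ref{lem41}) to recover a multiplicative majorant, and I expect this to be the main obstacle, since $\widehat w(\log x)$ does not factor over primes. What I would try first is to majorize $\widehat w(\log x)\ll \min(x,x^{-1})^{\sigma}\le \prod_p p^{-\sigma|e_p(x)|}$, which is multiplicative in the exponents $e_p(x)$. This produces local factors of the form
\[
\sum_{r,s,r',s'}\frac{(\alpha|\lambda_\pi(p)|)^{r+s+r'+s'}\,p^{-\sigma|(\text{net exponent})|}}{r!s!r'!s'!\,p^{\max(r,s)+\max(r',s')}}\cdot O(1).
\]
In these factors, terms whose net exponent is zero at first order give $1+(2\alpha^2+2\alpha^2)\lambda_\pi(p)^2/p$. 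Terms with nonzero net exponent at first order carry the extra saving $p^{-\sigma}$, so they are absorbed into a convergent product $\prod_p(1+O(p^{-1-\sigma}))\ll 1$. The constraint $\alpha<1/4$ should enter here: the cross terms in which the net exponent cancels between the two copies of the square are weighted by a combinatorial factor (up to $4$ from the choices $\mathfrak{p},\overline{\mathfrak{p}}$ in each copy), and $\alpha<1/4$ keeps the resulting higher-order terms summable and keeps the first-order coefficient controlled after taking the square root. Taking square roots then gives local factors $1+O(\alpha^2\lambda_\pi(p)^2/p)+O(p^{-1-\sigma})$. If the constant in front of $\alpha^2$ is not $1$, I would rescale $w$ or replace Cauchy--Schwarz by a direct contour shift of $\Phi$ (using that $\Phi$ is entire with decay on horizontal lines), applied separately to each of the two frequency signs.

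Finally, the condition $\Omega\le\ell_j$ is harmless. After bounding by absolute values, dropping it only enlarges the sum, and Lemma \ref{lem41} already supplies the needed absolute convergence (all $p\in P_j$ have $|\lambda_\pi(p)|\le B$). Multiplying over $j=1,\dots,R$ then yields the stated product.
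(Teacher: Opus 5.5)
Your proposal has a genuine gap, at the step you yourself flagged as the main obstacle. The majorization $\min(x,x^{-1})^{\sigma}\le\prod_p p^{-\sigma|e_p(x)|}$ is false. Write $x=u/v$ in lowest terms. The right-hand side is $(uv)^{-\sigma}$, and one always has $(uv)^{-\sigma}\le(\min(u,v)/\max(u,v))^{\sigma}$, with equality only when $\min(u,v)=1$. So the inequality runs the wrong way.

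Concretely, take $x=p_1/p_2$ with distinct primes and $p_1/p_2$ close to $1$. This already arises from the term $a=p_1$, $b=p_2$ of $\tau^{\ast}_{it}(p_1p_2)$ for $\mathfrak{n}=\mathfrak{p}_1\mathfrak{p}_2$. Here $\hat w(\log x)\asymp\hat w(0)$, so such off-diagonal terms get no saving at all, and no Euler product $\prod_p(1+O(p^{-1-\sigma}))$ can dominate them. These near-diagonal terms are the entire difficulty. Summing absolute values without the constraint $u\approx v$ gives $(\sum_a g(a)/a)^2$, with $g(n)=|\tilde{a}_{\pi}(n)|(2\alpha)^{\Omega(n)}$, which can be a positive power of $\log D$. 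They are exactly the linear terms you correctly identified as the obstruction, and they are not killed by any local factor $p^{-\sigma}$. Your contour-shift fallback gives the same non-factoring saving $\min(a/b,b/a)^{c}$ and does not help. Your account of how $\alpha<1/4$ enters, via combinatorial factors of $4$, has no basis.

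The missing idea is a mean-value estimate showing that these weights are sparse in short multiplicative windows. The paper writes $\mathfrak{n}=\mathfrak{n}^{\ast}(n)$. The rational part yields exactly $\prod_p(1+\alpha^2\lambda_\pi(p)^2/p+O(p^{-2}))$; this is where your ``$r=s$'' intuition is correct. For the primitive part, one expands $\tau^{\ast}_{it}$ and integrates directly against $\Phi$, without squaring first. This bounds it by $\sum_{a,b}\frac{g(a)g(b)}{ab}\big|\widecheck{\Phi}\big(\frac{1}{2\pi}\log\frac{a}{b}\big)\big|$. Wirsing's theorem, together with $\sum_{p\le x}|\lambda_\pi(p)|/p\le\log\log x+O(1)$ from Rankin--Selberg, gives $\sum_{n\le x}g(n)\ll x(\log x)^{2\alpha-1}$. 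Hence, for $a$ in a dyadic block $a\sim A$, the window of $b$ near $a$ contributes $(\log A)^{2\alpha-1}$, and so does the $a$-sum. The total is $\sum_k k^{4\alpha-2}$, which is finite exactly when $\alpha<1/4$.

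Your opening Cauchy--Schwarz step is also lossy. It replaces this second moment by the fourth moment $\int|G(t)|^4w(t)\,{\rm d}t$, with $G(t)=\sum_a g(a)a^{-1-it}$. The relevant weights are then $g*g$, which has twice the density at primes. The same argument would then only converge for $\alpha<1/8$, so even a repaired version of your route would not reach the stated range.
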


\begin{proof} Let $I^E$ denote the ideals supported on the union of the $P^E_j$, $1 \leq j \leq R$. Every ideal $\mathfrak{n} \in I^E$ can be decomposed uniquely as $\mathfrak{n} = \prod_{j=1}^R \mathfrak{n}_j$ with $\mathfrak{n}_j \in I_j^E$ and we denote by $\xi$ the characteristic function on the set of ideals $\mathfrak{n}\in I^E$ satisfying $\Omega(\mathfrak{n}_j) \leq \ell_j$ for all $1 \leq j \leq R$. 
Further decomposing $\mathfrak{n}$ into a primitive part and a principal ideal generated by a rational integer, we then have
 \begin{displaymath}
\begin{split}
\int_{\R}  \mathcal{P}_{\alpha}^{(2)}(\Lambda_{it})  \Phi(t) \, {\rm d} t &= \sum_{ \substack{\mathfrak{n} \in I^E\\ \mathfrak{n} \text{ primitive}} }   \frac{\tilde{a}_{\pi}({\rm N}\mathfrak{n}) \alpha^{\Omega({\rm N}\mathfrak{n})}}{V({\rm N}\mathfrak{n})\omega(\mathfrak{n})}  
\int_{\R} \tau^{\ast}_{it}({\rm N}\mathfrak{n}) 
\Phi(t) \frac{{\rm d} t}{2\pi}
 \sum_{ n \in \N, (n)\in I^E }   \frac{\tilde{a}_{\pi}(n)^2 \alpha^{\Omega(2n)}}{V(n)\omega((n))}  \xi(\mathfrak{n}(n))\\
 & \ll \sum_{ \substack{\mathfrak{n} \in I^E\\ \mathfrak{n} \text{ primitive}} }   \frac{|\tilde{a}_{\pi}({\rm N}\mathfrak{n}) |\alpha^{\Omega({\rm N}\mathfrak{n})}}{({\rm N}\mathfrak{n})\omega(\mathfrak{n})}  
\Big|\int_{\R} \tau^{\ast}_{it}({\rm N}\mathfrak{n})  \Phi(t) \frac{{\rm d} t}{2\pi} \Big|
 \sum_{ n \in \N, (n)\in I^E }   \frac{\tilde{a}_{\pi}(n)^2 \alpha^{\Omega(2n)}}{n\omega((n))}.
\end{split}
\end{displaymath}
We estimate this rather coarsely. The $n$-sum is bounded by 
\[
\prod_{j=1}^R \prod_{p \in P_j} \Big(1 + \frac{\alpha^2 \lambda_{\pi}(p)^2}{p} + O\Big(\frac{1}{p^2}\Big)\Big).
\]
To estimate the sum over ideals $\mathfrak{n}$, we write $N = {\rm N}\mathfrak{n}$ and observe that there  are at most $2^{\omega(N)} \leq 2^{\Omega(N)}$ primitive ideals of norm $N$. We recall \eqref{div}. 
Finally we express the integral over $t$ in terms of the Fourier transform $\widecheck{\Phi}$ of $\Phi$ which satisfies the bound $\widecheck{\Phi}(x) \ll_A (1 + |x|)^{-10}$ from partial integration. (In fact, we have $\widecheck{\Phi}(x) \ll e^{-2\pi |x|}$ since $\Phi$ is entire and hence we may shift the contour. We do not need this stronger bound.) We obtain the upper bound
\[
\underset{a, b, d}{\left.\sum\right.^{\ast}} \frac{|\tilde{a}_{\pi}(abd^2)|  (2\alpha)^{\Omega(abd^2)}}{abd^3} \Big|\widecheck{\Phi}\Big( \frac{1}{2\pi} \log\Big(\frac{a}{b}\Big)\Big)\Big| \cdot \prod_{j=1}^R \prod_{p \in P_j} \Big(1 + \frac{\alpha^2 \lambda_{\pi}(p)^2}{p} + O\Big(\frac{1}{p^2}\Big)\Big),
\]
where the asterisk indicates that we are summing over number $a, b$ composed of primes $p > C_0 > B$ such that $|\tilde{a}_{\pi}(p)| \leq B$ (we drop the fact that the primes have to be split). Recall that $\tilde{a}_{\pi}$ and $(2\alpha)^{\Omega}$ are completely multiplicative. In particular, we can immediately drop the sum over $d$. 
We now focus on the sum over $a, b$. In particular, we will see in a moment that this sum converges, at least for sufficiently small $\alpha$. This will complete the proof. 

 Let us fix $a$ for a moment and choose $z \geq 2/a$, then
 \begin{displaymath}
\begin{split}
\underset{\frac{1}{2}az \leq b  \leq az}{\left.\sum\right.^{\ast}} \frac{|\tilde{a}_{\pi}(b)| (2\alpha)^{\Omega(b)}}{b}\Big|\widecheck{\Phi}\Big( \frac{1}{2\pi} \log\Big(\frac{a}{b}\Big)\Big)\Big|&\ll \frac{1}{az (1 + |\log z|)^{10} }\underset{b \ll az}{\left.\sum\right.^{\ast}}|\tilde{a}_{\pi}(b)| (2\alpha)^{\Omega(b)} \\
& \ll \frac{1}{az (1 + |\log z|)^{10} } \frac{az}{\log az}\prod_{\substack{C_0 \leq p \ll az \\ |\lambda_{\pi}(p)| \leq B}}\Big( 1 + \frac{|\lambda_{\pi}(p)|(2\alpha)}{p}\Big)
\end{split}
\end{displaymath}
by a result of Wirsing \cite[Satz 1]{Wi}. (This result needs $|\lambda_{\pi}(p)|(2 \alpha) < 2$ as an assumption to make sure that $\sum_{k }( |\lambda_{\pi}(p)|(2 \alpha) )^k p^{-k}$ converges for every $p$. Since we exclude small primes, it suffices to assume $|\lambda_{\pi}(p)|(2 \alpha) < C_0$ which is guaranteed by the condition $|\lambda_{\pi}(p)| \leq B < C_0$ and $0 < \alpha < 1/2$.)

By Cauchy--Schwarz and Rankin--Selberg we have 
$$\sum_{p \leq x} \frac{|\lambda_{\pi}(p)|}{p} \leq \log\log x + O(1),$$
so that
$$\underset{\frac{1}{2}az \leq b  \leq az}{\left.\sum\right.^{\ast}} \frac{|\tilde{a}_{\pi}(b)| (2\alpha)^{\Omega(b)}}{b}\Big|\widecheck{\Phi}\Big( \frac{1}{2\pi} \log\Big(\frac{a}{b}\Big)\Big)\Big| \ll \frac{(\log az)^{2\alpha - 1}}{(1 + |\log z|)^{10} }. $$
Summing this over dyadic values of $z$ and inserting it into the remaining $a$-sum, we obtain
$$\left.\sum_a\right.^{\ast} \frac{|\tilde{a}_{\pi}(a)| (2\alpha)^{\Omega(a)}}{a} (\log a)^{2\alpha - 1}.$$
We split the sum into dyadic ranges and apply Wirsing's theorem again to conclude that the above is bounded by
$$\sum_{A = 2^{k}} \Big((\log A)^{2\alpha - 1}\Big)^2 \ll 1$$
provided that $\alpha < 1/4$. This completes the proof. 
\end{proof}

\subsection{End of proof of Theorem \ref{thm:unconditional}}\label{sec:end-of-proof}
We claim that
\begin{equation}\label{c-c-bound}
\prod_{i={1,2}}\big(\mathcal{P}^{(i)}(\Lambda_{\pi_i})+ O(D^{-\beta})\big)\ll \exp\Big(-\frac12 S_D(\pi_1,\pi_2)\Big), 
\end{equation}
and
\begin{equation}\label{c-e-bound}
\big(\mathcal{P}_{\alpha}^{(1)}(\Lambda_\pi) + O(D^{-\beta})\big)\Big(\int_{\R}  \mathcal{P}_{\alpha}^{(2)}(\Lambda_{it}) \Phi(t) \frac{{\rm d} t}{2\pi}+ O(D^{-\beta})\Big) \ll \exp\Big(-2(\alpha - \alpha^2) T_D(\pi)\Big)
\end{equation}
provided that $\alpha < 1/4$. In light of Corollary \ref{cor:CS} and Proposition \ref{cor:asymp-M1}, this will suffice to conclude the proof of Theorem \ref{thm:unconditional}.

We observe that 
\begin{displaymath}
\begin{split}
  \exp\Big( \sum_{p \in P_j} \frac{1}{p} \Big)& \leq \exp\big(   \log\log D^{1/\ell_j^2} - \log\log D^{1/\ell_{j-1}^2} + O(1)\big)\\
  & \ll \exp\big( 2 (\log \ell_{j-1} - \log \ell_j) \big)  = \exp\big( 2 (\tfrac{1}{60B^2 } \ell_j + O(1) - \log \ell_j) \big) \ll \exp\big( \tfrac{1}{30B^2 } \ell_j\big)
\end{split}
\end{displaymath}
for $j > 1$, and the final bound remains true for $j =1$. 

From 
Lemma \ref{lem41} we conclude that $| \mathcal{T}_j^{(1)} \mathcal{E}_j^{(2)}| +| \mathcal{E}_j^{(1)} \mathcal{T}_j^{(2)} | + |\mathcal{T}_j^{(1)} \mathcal{T}_j^{(2)}|$ is bounded by
\[
3 \cdot 2^{-\ell_j} \exp\Big( \sum_{p \in P_j} \frac{16B^2 }{p}\Big)\\ \ll 2^{-\ell_j} \exp \Big(\frac{16}{30} \ell_j\Big)  =  \exp \Big(\Big(\frac{16}{30} - \log 2\Big)\ell_j\Big), 
\]
so that 
$$ \mathcal{E}_j^{(1)} \mathcal{E}_j^{(2)} \gg  \exp\Big(-\frac{1}{4} \ell_j\Big)\big(|\mathcal{T}_j^{(1)} \mathcal{E}_j^{(2)}| +| \mathcal{E}_j^{(1)} \mathcal{T}_j^{(2)} | + |\mathcal{T}_j^{(1)} \mathcal{T}_j^{(2)}|\big).$$
Taking the product over all $j=1,\ldots ,R$, we obtain
\begin{displaymath}
\begin{split}
\mathcal{P}^{(1)}(\Lambda_{\pi_1})\mathcal{P}^{(2)}(\Lambda_{\pi_2}) & \geq  \mathcal{E}^{(1)} \mathcal{E}^{(2)}\Big(1 - O\big( \exp(-\tfrac{1}{8} \ell_j)\big)\Big) \gg  \mathcal{E}^{(1)} \mathcal{E}^{(2)} \\
&\gg \exp\Big( -\sum_{j=1}^R \sum_{p \in P_j}  \frac{ ( \lambda_{\pi_2}(p) - \lambda_{\pi_1}(p))^2}{2p} \Big).
\end{split}
\end{displaymath}
By a similar reasoning, we have for an individual value $i = 1, 2$ that
$$\mathcal{P}^{(i)}(\Lambda_{\pi_i})= \prod_{j=1}^R (\mathcal{E}_j^{(i)} -\mathcal{T}_j^{(i)})  \ll \prod_{j=1}^R \exp\Big(\sum_{p \in P_j} \frac{8B^2}{p}\Big) (1 + 2^{-\ell_j}) = (\log D)^{O(1)}$$
so that the error term $O(D^{-\beta})$ is negligible. This concludes the proof of \eqref{c-c-bound}. 

In the same way we obtain
\begin{displaymath}
\begin{split}
\mathcal{P}_{\alpha}^{(1)}(\Lambda_\pi)\ll \exp\big((\alpha^2 - 2\alpha)T_D(\pi)\big),
\end{split}
\end{displaymath}
which together with Lemma \ref{calpha} establishes \eqref{c-e-bound} as before.

\section{Proof of Theorem \ref{thm:unconditional} : the off-speed case}\label{sec-off}

The goal of this section is to prove part \eqref{thm21a} of Theorem \ref{thm:unconditional}, which states the bound 
\begin{equation}\label{recall-nu=2}
P_{\mathscr{D}_1,\mathscr{D}_2}^{\Delta_2}(f_1\otimes f_2) \ll (1+T_D(\pi_1))^{-1/2}
\end{equation}
under the hypothesis that $\bG_1$ is anisotropic. In the definition of $T_D(\pi_1)$ we have taken $B=\infty$, so that the restriction to Hecke eigenvalues satisfying $|\lambda_{\pi_1}(p)|\leq B$ is automatically satisfied.

The off-speed (or $\nu=2$) case treated in this section requires a different approach from the one presented in Section \ref{sec:mollification}. Indeed, a direct implementation of that method would lead to sums of the form
\[
\sum_{\chi \in\ClD^{\wedge}} |W_{\mathscr{D}_1}(f_1, \chi^2)|^2 \chi(\mathfrak{n}).
\]
But unless $\mathfrak{n}^{\ast}$ is a square, the work \cite{BBK} requires GRH to treat such a mean value. To see this, assume for simplicity that $|\ClD[2]|=1$. The obstacle is that, even if $[\mathfrak{n}^\ast]$ is minimally represented by an ideal of sufficiently small norm so that Theorem \ref{thm12} applies, the same is not necessarily true for the square-root class of $[\mathfrak{n}^\ast]$ in $\ClD$. To circumvent this problem, we proceed with a less symmetric argument, which is agnostic to whether $\bB_1\simeq \bB_2$ and more suited to detecting the differing character frequencies $\chi$ and $\chi^2$. It relies only upon the unconditional asymptotic formulae of Theorem \ref{thm12}.

A further complication arises in this setting from the fact that the image of the second component $\{[\mathfrak{a}]^2.[x_2]:[\mathfrak{a}]\in \ClD\}$ lies in a single coset of squares. Indeed, by Gau{\ss}' genus theory, the image captures only a proportion of about $2^{-\omega(D)}$ of the Heegner packet, which can be as small as $\exp(-\log D/\log\log D)$. We resolve this problem, not by assuming the triviality of the $2$-torsion of the class group, as in \cite[Theorem 2]{BB}, but by introducing an extra averaging over quadratic class group characters $\psi$, giving rise to the twisted periods \eqref{cusp}. (The $\nu=1$ case treated in Section \ref{sec:mollification} only used $\psi=1$.) This argument is carried out in Section \ref{sec:off-speed-proof}, which in fact needs no more information than $2^{\omega(D)} = D^{o(1)}.$ 

\subsection{Heuristics}\label{sec:heuristic1}
In this section we present our argument in a heuristic form; it will be carried out rigorously in Section \ref{sec:off-speed-proof}. For simplicity, we assume that $|\ClD[2]|=1$.

Recall from \eqref{parseval} the expression 
\[
P_{\mathscr{D}_1,\mathscr{D}_2}^{\Delta_2}(f_1\otimes f_2)=\sum_{\chi \in{\rm Cl}^{\wedge}_D} W_{\mathscr{D}_1}(f_1;\chi^2)W_{\mathscr{D}_2}(f_2;\overline{\chi}).
\]
A direct application of Cauchy--Schwarz, combined with the asymptotics
\begin{equation}\label{eq:mean-value}
\begin{aligned}
\sum_{\chi\in \ClD^\wedge} |W_{\mathscr{D}_1}(f_1,\chi^2)|^2\underset{|\ClD[2]|=1}{=}\sum_{\chi\in \ClD^\wedge} |W_{\mathscr{D}_1}(f_1,\chi)|^2&=1+O(D^{-\delta}),\\
\sum_{\chi\in \ClD^\wedge} |W_{\mathscr{D}_2}(f_2,\chi)|^2&=1+O(D^{-\delta}),
\end{aligned}
\end{equation}
from Theorem \ref{thm12} (applied with $\psi=1$ and $\mathfrak{n}=\mathscr{O}_E$), would give an upper bound of $O(1)$.

To improve this bound we need to exploit the distinct character frequencies --- $\chi^2$ and $\chi$ --- involved in the two factors. Rather than inserting mollifiers, as in Section \ref{sec:heuristic2}, our strategy to show \eqref{recall-nu=2} will be to first provide a convenient description of the two sets of characters that are responsible for the mean values in \eqref{eq:mean-value} and then to show that such character sets have little overlap. Both steps will use the full strength of Theorem \ref{thm12}. We can then reduce waste by conditioning our use of Cauchy--Schwarz on such information.

We adopt a probabilistic viewpoint and define the following weighted measure on $\ClD^\wedge$:
\[
\textrm{d}m_1: A\mapsto \sum_{\chi\in A} |W_{\mathscr{D}_1}(f_1,\chi^2)|^2.
\]
Since, by \eqref{eq:mean-value}, the total mass is $\textrm{d}m_1(\ClD^\wedge)=1+O(D^{-\delta})$, we may ignore the normalization in the heuristic and treat $\textrm{d}m_1$ as a probability measure. We then consider the sum
\begin{equation}\label{def:Q-chi}
\mathcal{Q}(\chi)=\sum_{\substack{C_0\le p\le D^c\\ p=\mathfrak p\bar{\mathfrak p}\ \mathrm{split}}}
\frac{\lambda_{\pi_1}(p)\big(\chi(\mathfrak p)+\overline{\chi}(\mathfrak p)\big)}{p^{1/2}},
\end{equation}
for appropriate constants $c,C_0>0$. Waldspurger's formula and the approximation argument of \cite[Lemma 6]{BB} together imply that $D^{-1/2}e^{\mathcal{Q}(\chi^2)}$ should approximate $|W_{\mathscr{D}_1}(f_1,\chi^2)|^2$, but it will be enough for our purposes to consider $\mathcal{Q}(\chi^2)$ itself. Viewing $\mathcal{Q}(\chi^2)$ as a random variable on $\ClD^\wedge$, we define its mean and variance relative to $\textrm{d}m_1$ as
\[
\mu= \sum_{\chi\in\ClD^\wedge}|W_{\mathscr{D}_1}(f_1,\chi^2)|^2\mathcal{Q}(\chi^2)\qquad\textrm{and}\qquad \sigma^2= \sum_{\chi\in\ClD^\wedge}|W_{\mathscr{D}_1}(f_1,\chi^2)|^2(\mathcal{Q}(\chi^2)-\mu)^2.
\]
In Lemma \ref{lem71} we show that $\mu=2T_D(\pi_1)+O(1)$ and $\sigma^2=O(T_D(\pi_1))$. This leads us to define the following subset, which should ``carry" the weighted measure $\textrm{d}m_1$:
\begin{equation}\label{def:mathcalX}
\mathcal{X}=\Big\{\chi\in\ClD^\wedge:\ |\mathcal{Q}(\chi^2)-2T_D(\pi_1)|\le \tfrac{1}{10}T_D(\pi_1)\Big\}.
\end{equation}
As the size of the window is $T_D(\pi_1)$, rather than $\sigma=O(T_D(\pi_1)^{1/2})$, the measure $\textrm{d}m_1$ should have negligible mass outside of $\mathcal{X}$. Indeed, an application of Chebyshev's inequality gives
\[
\sum_{\chi\notin\mathcal{X}} |W_{\mathscr{D}_1}(f_1,\chi^2)|^2\ll \frac{\sigma^2}{T_D(\pi_1)^2}\asymp T_D(\pi_1)^{-1}.
\]
We call $\mathcal{X}$ the set of \textit{$\textrm{d}m_1$-typical characters.}

On the other hand, we may also define a measure $\textrm{d}m_2$ on $\ClD^\wedge$ with respect to the weights of the second factor $|W_{\mathscr{D}_2}(f_2,\chi)|^2$. We would like to say that $\textrm{d}m_1$ is statistically disjoint with $\textrm{d}m_2$, in the sense that $\textrm{d}m_2$ should put negligible mass on $\mathcal{X}$. Now $\mathcal{Q}(\chi^2)^2\gg T_D(\pi_1)^2$ for all $\chi\in\mathcal{X}$. Hence, by Markov's inequality, 
\[
\textrm{d}m_2(\mathcal{X})
=\sum_{\chi\in\mathcal{X}} |W_{\mathscr{D}_2}(f_2,\chi)|^2
\ll \frac{1}{T_D(\pi_1)^2}\sum_{\chi\in \ClD^\wedge} |W_{\mathscr{D}_2}(f_2,\chi)|^2\mathcal{Q}(\chi^2)^2.
\]
Lemma \ref{lem71bis} bounds the last quantity by $O(T_D(\pi_1)^{-1})$.

Applying Cauchy--Schwarz in two different ways, according to whether $\chi\notin\mathcal{X}$ or $\chi\in\mathcal{X}$, leads to \eqref{recall-nu=2}. The argument crucially exploits the mismatch between $\chi$ and $\chi^2$, and therefore does not detect whether $f_1=f_2$. While this makes it unsuitable to the $\nu=1$ case, it allows for self-products in the $\nu=2$ case, as mentioned in Remark \ref{rem:variants}.

\subsection{Some moments}\label{sec:AES}

We keep the notation from Section \ref{sec:heuristic1}. In particular, we shall retain the definition of $\mathcal{W}(\chi)$ in \eqref{def:Q-chi}. The estimates in this section are expressed in terms of the quantity $T_D(\pi_1)$ from Section \ref{secoff}. These estimates are meaningful, of course, only if $T_D(\pi_1)$ is large, which is a separate question addressed by Corollary \ref{lem61}. 

The following moments provide a means for detecting which characters $\chi$ are responsible for the large values of $|W_{\mathscr{D}_1}(f_1,\chi)|^2$. 

\begin{lemma}\label{lem71} We have
\begin{align*}
&\sum_{\chi \in \ClD^{\wedge}} |W_{\mathscr{D}_1}(f_1,\chi)|^2\mathcal{Q}(\chi) = 2 T_D(\pi_1) + O(1),\\
& \sum_{\chi \in \ClD^{\wedge}} |W_{\mathscr{D}_1}(f_1,\chi)|^2\mathcal{Q}(\chi)^2 = 4 T_D(\pi_1)^2 + O(1+T_D(\pi_1)).
\end{align*}
\end{lemma}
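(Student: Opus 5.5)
The plan is to expand $\mathcal{Q}(\chi)$ and $\mathcal{Q}(\chi)^2$ as Dirichlet polynomials in $\chi$ over ideals, interchange summations, and evaluate each resulting twisted first moment with Theorem \ref{thm12} for $\psi=1$. The main term of each shifted moment is then $\Lambda_{\pi_1}(\mathfrak{n})$, and the error is $O(({\rm N}\mathfrak{n}^*)^A D^{-\delta})$. By definition \eqref{cusp},
\[
\sum_{\chi} |W_{\mathscr{D}_1}(f_1,\chi)|^2\chi(\mathfrak{n}) = P^\Delta_{\mathscr{D}_1}(f_1\otimes f_1;\mathfrak{n}).
\]
All ideals that occur have norm at most $D^{2c}$. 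Choosing $c$ small in terms of $A,\delta$, as in \eqref{c-size}, makes the total error $O(D^{-\delta/2})$ after the trivial bound on the coefficients.

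First moment. We have $\mathcal{Q}(\chi)=\sum_{p}\lambda_{\pi_1}(p)p^{-1/2}(\chi(\mathfrak{p})+\chi(\bar{\mathfrak{p}}))$. This is because $\bar\chi(\mathfrak{p})=\chi(\bar{\mathfrak{p}})$: the class of $\bar{\mathfrak p}$ is the inverse of the class of $\mathfrak p$. For a split prime $\mathfrak{p}$ of norm $p$ the ideal is primitive, so Theorem \ref{thm12} gives the main term
\[
\Lambda_{\pi_1}(\mathfrak{p})=\frac{\sqrt p}{p+1}\lambda_{\pi_1}(p).
\]
Summing, the main term is $\sum_p 2\lambda_{\pi_1}(p)^2/(p+1) = 2T_D(\pi_1)+O(\sum_p \lambda_{\pi_1}(p)^2/p^2)$. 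Here $B=\infty$, so $T_D$ has no truncation. The remainder is $O(1)$, using the Kim--Sarnak bound $|\lambda_{\pi_1}(p)|\le 2p^{7/64}$, or simply Rankin--Selberg with partial summation.

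Second moment. Expand $\mathcal{Q}(\chi)^2$ as a double sum over $p,q$ and over choices of $\mathfrak{p}'\in\{\mathfrak{p},\bar{\mathfrak{p}}\}$ and $\mathfrak{q}'\in\{\mathfrak{q},\bar{\mathfrak{q}}\}$, with shift $\mathfrak{n}=\mathfrak{p}'\mathfrak{q}'$. There are three cases.
\begin{itemize}
\item If $p\neq q$, then $\mathfrak{n}$ is primitive of norm $pq$, and $\lambda^*_{\pi_1}(pq)=\lambda_{\pi_1}(p)\lambda_{\pi_1}(q)$ by multiplicativity. The main term factorises as $\Lambda_{\pi_1}(\mathfrak{p}')\Lambda_{\pi_1}(\mathfrak{q}')$, since $V$ is multiplicative. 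Summed over all $p\ne q$ and the four orientations, this gives $(2T_D+O(1))^2$ minus the diagonal terms $p=q$ of the square. Those diagonal terms total $O(\sum_p\lambda_{\pi_1}(p)^4/p^2)=O(1)$, by the same bound toward Ramanujan.
\item If $p=q$ and $\mathfrak{n}=\mathfrak{p}\bar{\mathfrak{p}}=(p)$, then $\mathfrak{n}^*=\mathscr{O}_E$ and $\Lambda=1$. These contribute $2\sum_p\lambda_{\pi_1}(p)^2/p = 2T_D(\pi_1)$.
\item If $p=q$ and $\mathfrak{n}=\mathfrak{p}^2$ (or $\bar{\mathfrak{p}}^2$), then $\mathfrak{n}$ is primitive of norm $p^2$. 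We get $\Lambda_{\pi_1}(\mathfrak{n})\asymp p^{-1}\lambda^*_{\pi_1}(p^2)$, with $\lambda^*_{\pi_1}(p^2)=\lambda_{\pi_1}(p^2)-1/p$. This contributes $\sum_p\lambda_{\pi_1}(p)^2\lambda_{\pi_1}(p^2)/p^2=O(1)$.
\end{itemize}
Altogether the second moment is $4T_D^2+O(T_D+1)$. The cross term $2T_D\cdot O(1)$ is absorbed into $O(T_D)$.

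The main obstacle is bookkeeping rather than a single hard step. We must check that every shift fed into Theorem \ref{thm12} is coprime to $D$ and $d_{\bB_1}$; this holds because the split primes satisfy $p\ge C_0>d_{\bB_1}$, and split primes do not divide $D$. We must also track the primitive kernels correctly in the diagonal case. Finally, we need the error terms $O(1)$ to be uniform without a truncation $|\lambda_{\pi_1}(p)|\le B$. For that I would invoke Kim--Sarnak, or the Rankin--Selberg bound $\sum_p\lambda_{\pi_1}(p)^4/p^{2}<\infty$ that follows from the automorphy of ${\rm sym}^2$.
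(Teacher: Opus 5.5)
Your proposal is correct and follows the paper's proof essentially step for step. You expand $\mathcal{Q}(\chi)$ and $\mathcal{Q}(\chi)^2$, interchange summation, and apply Theorem \ref{thm12} with $\psi=1$ to the shifts $\mathfrak{p},\bar{\mathfrak{p}}$ and $\mathfrak{p}'\mathfrak{q}'$, reading off the main terms $\Lambda_{\pi_1}(\mathfrak{n})$: the terms with $p\neq q$ give $4T_D(\pi_1)^2+O(1+T_D(\pi_1))$, $\mathfrak{n}=(p)$ gives $2T_D(\pi_1)$, and $\mathfrak{n}=\mathfrak{p}^2,\bar{\mathfrak{p}}^2$ gives $O(1)$. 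Your value $\lambda_{\pi_1}(p)^2\lambda^*_{\pi_1}(p^2)/(p(p+1))$ for each $\mathfrak{p}^2$ term is slightly more accurate than the paper's displayed expression, which has $\sqrt{p}$ in place of $p$; either way it is $O(1)$.
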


\begin{proof} 
Both statements are simple consequences of Theorem \ref{thm12}. Indeed, we may switch the order of summation and apply Theorem \ref{thm12} with $\mathfrak{n}=\mathfrak{p}, \overline{\mathfrak{p}}$, to obtain
\[
\sum_{\chi \in \ClD^{\wedge}} |W_{\mathscr{D}_1}(f_1,\chi)|^2\mathcal{Q}(\chi) =  \sum_{\substack{ C_0 \leq p \leq D^c\\ p \text{ split}}}  \frac{2\lambda_{\pi_1}(p)^2}{p+1} + O(1),
\]
which yields the first claim. Similarly, we may apply Theorem \ref{thm12} with $\mathfrak{n}=\mathfrak{pq}, \mathfrak{p} \overline{\mathfrak{q}},  \overline{\mathfrak{p}} \mathfrak{q}, \overline{\mathfrak{p}} \overline{\mathfrak{q}}$, to get
\begin{align*}
& \sum_{\chi \in \ClD^{\wedge}} |W_{\mathscr{D}_1}(f_1,\chi)|^2\mathcal{Q}(\chi)^2 \\
&=  \sum_{ \substack{C_0\leq p \neq q \leq D^c\\ p, q \text{ split}}}  \frac{4\lambda_{\pi_1}(pq)^2}{(p+1)(q+1)} +  \sum_{ \substack{C_0 \leq p  \leq D^c\\ p \text{ split}}}   2\lambda_{\pi_i}(p)^2 \Big( \frac{\lambda_{\pi_1}(p^2)}{(p+1)\sqrt{p}} + \frac{1}{p}\Big) + O(1)\\
& =    \sum_{ \substack{C_0 \leq p ,  q \leq D^c\\ p, q, \text{ split}}}  \frac{4\lambda_{\pi_1}(p)^2 \lambda_{\pi_1}(q)^2}{pq} +    O(1+T_D(\pi_1)),
\end{align*}
as desired.\end{proof}

The next moment encodes the statistical independence of $|W_{\mathscr{D}_1}(f_1,\chi^2)|^2$ and $|W_{\mathscr{D}_2}(f_2,\chi)|^2$. We note that the upper bound is comparable to the error term of the second asymptotic in Lemma \ref{lem71}.
\begin{lemma}\label{lem71bis} 
For $f_2$ cuspidal or $f_2=E_\Psi$ we have 
\[
\sum_{\chi \in \ClD^{\wedge}}|W_{\mathscr{D}_2}(f_2,\chi)|^2 \mathcal{Q}_1(\chi^2)^2  \ll  1+T_D(\pi_1).
\]
\end{lemma}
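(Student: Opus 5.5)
We expand $\mathcal{Q}(\chi^2)^2$ as a short Dirichlet polynomial in $\chi$ and apply Theorem \ref{thm12} to $f_2$ term by term. By \eqref{def:Q-chi},
\[
\mathcal{Q}(\chi^2)^2=\sum_{\substack{C_0\le p,q\le D^c\\ p,q \text{ split}}}\frac{\lambda_{\pi_1}(p)\lambda_{\pi_1}(q)}{\sqrt{pq}}\sum_{\mathfrak{p}'\in\{\mathfrak p,\bar{\mathfrak p}\}}\sum_{\mathfrak{q}'\in\{\mathfrak q,\bar{\mathfrak q}\}}\chi(\mathfrak{p}'^2\mathfrak{q}'^2).
\]
Interchanging summations, the left-hand side of Lemma \ref{lem71bis} becomes a sum over $p,q$ and the four choices of $\mathfrak{p}',\mathfrak{q}'$ of the coefficients above times $P^\Delta_{\mathscr{D}_2}(f_2\otimes f_2;\mathfrak{p}'^2\mathfrak{q}'^2)$, with trivial $\psi$. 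The ideals involved have norm at most $D^{4c}$. Choosing $c$ small relative to $\delta/A$, as in \eqref{c-size}, the error terms $O((\mathrm{N}\mathfrak n^*)^AD^{-\delta})$ summed over $p,q$ contribute $O(D^{-\delta/2})$. It therefore remains to bound the main terms: $\Lambda_{\pi_2}(\mathfrak{p}'^2\mathfrak{q}'^2)$ in the cuspidal case, and $\int\Lambda_{it}(\mathfrak{p}'^2\mathfrak{q}'^2)\Phi(t)\,dt/2\pi$ in the Eisenstein case.

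The key point is the primitive kernel. If $p\neq q$ and $\mathfrak{p}'\mathfrak{q}'$ has no conjugate pair, then $\mathfrak{n}^*=\mathfrak{p}'^2\mathfrak{q}'^2$ is primitive of norm $p^2q^2$. The main term is then $\asymp (pq)^{-1}\lambda^*_{\pi_2}(p^2q^2)$, which is small because of the extra factor $1/\sqrt{\mathrm N\mathfrak n^*}$ relative to the prefactor. The resulting double sum
\[
\sum_{p,q}\frac{|\lambda_{\pi_1}(p)\lambda_{\pi_1}(q)|\,|\lambda^*_{\pi_2}(p^2)\lambda^*_{\pi_2}(q^2)|}{(pq)^{3/2}}
\]
converges absolutely. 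This uses the bound $\theta\le 7/64$ towards Ramanujan for $\pi_2$ and $\lambda_{\pi_1}(p)\ll p^{7/64}$; in the Eisenstein case $\tau^*_{it}$ is bounded by a divisor function and $\Phi$ is integrable. So these terms are $O(1)$.

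The diagonal configurations are where the main term could be large.
\begin{itemize}
\item If $p=q$ and $\mathfrak q'=\bar{\mathfrak p}'$, the ideal is $(p)^2$, so $\mathfrak n^*=\mathscr O_E$ and the main term is $1$ (resp.\ $\int\Phi$). The contribution is $\ll\sum_p\lambda_{\pi_1}(p)^2/p=T_D(\pi_1)$, since $B=\infty$ here.
\item If $p=q$ and $\mathfrak q'=\mathfrak p'$, then $\mathfrak n^*=\mathfrak p'^4$. The contribution is $\ll\sum_p\lambda_{\pi_1}(p)^2p^{-1}\cdot p^{-2+2\theta}$, which is $O(1)$.
\item If $p\neq q$ with a conjugate pair present, this cannot occur, since $\mathfrak p'$ and $\mathfrak q'$ lie over different primes.
\end{itemize}
Summing the three types gives $\ll 1+T_D(\pi_1)$.

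The main obstacle I expect is bookkeeping rather than analysis. One must verify that the only non-decaying main term is the $\mathfrak n^*=\mathscr O_E$ one. It must also be checked that the square $\chi^2$ causes no cancellation-free cross terms; a general $\chi(\mathfrak n)$ with $\mathfrak n$ of bounded norm is handled by Theorem \ref{thm12} without needing $\mathfrak n$ to be a square. Finally, one must confirm that the Ramanujan-type bounds give absolute convergence of the off-diagonal sum uniformly in $D$. For $f_2=E_\Psi$ the argument is identical, using Theorem \ref{thm12} in its Eisenstein form and the rapid decay of $\Phi$.
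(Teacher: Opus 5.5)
Your proposal is correct and is essentially the paper's proof. Both expand $\mathcal{Q}(\chi^2)^2$, apply Theorem \ref{thm12} termwise with $\mathfrak n=\mathfrak p'^2\mathfrak q'^2$, note that the off-diagonal main terms $\lambda_{\pi_1}(p)\lambda_{\pi_1}(q)\lambda^*_{\pi_2}(p^2q^2)/(\sqrt{pq}(p+1)(q+1))$ are absolutely summable, and observe that only $\mathfrak n^*=\mathscr{O}_E$ yields the $2T_D(\pi_1)$ term. One small slip: for $\mathfrak n=\mathfrak p'^4$ the main term is $\lambda^*_{\pi_2}(p^4)/(p(p+1))\ll p^{-2+4\theta}$, not $p^{-2+2\theta}$, which is still harmlessly summable against $\lambda_{\pi_1}(p)^2/p$.
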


\begin{proof}
If $f_2$ is cuspidal, Theorem \ref{thm12} gives
\begin{displaymath}
\begin{split}
 &\sum_{\chi \in \ClD^{\wedge}} |W_{\mathscr{D}_2}(f_2,\chi)|^2 \mathcal{Q}_1(\chi^2)^2  \\
 & =  \sum_{\substack{ C_0 \leq p \neq q \leq D^c\\ p, q \text{ split}}}  \frac{4\lambda_{\pi_1}(pq)\lambda_{\pi_2}(p^2q^2)}{\sqrt{pq}(p+1)(q+1)} +  \sum_{\substack{ C_0 \leq p  \leq D^c\\ p \text{ split}}} 2\lambda_{\pi_1}(p)^2 \Big( \frac{\lambda_{\pi_2}(p^4)}{p^2(p+1)} + \frac{1}{p}\Big) + O(1)\\
 & \ll  1+T_D(\pi_1), 
\end{split}\end{displaymath}
and the same argument works for $|W_{\mathscr{D}_2}(E_\Psi,\chi)|^2$ in place of $|W_{\mathscr{D}_2}(f_2,\chi)|^2$. 
\end{proof}

\subsection{Two different instances of Cauchy--Schwarz}\label{sec:off-speed-proof}

We now have the ingredients necessary to prove \eqref{recall-nu=2}. Note that we may, and will, assume that $T_D(\pi_1) > 1$, since if $T_D(\pi_1) \leq 1$, the bound follows trivially from the Cauchy--Schwarz inequality and Theorem \ref{thm12} with $\mathfrak{n} =\mathscr{O}_E$. 

As a way of overcoming the non-bijectivity of $\chi\mapsto\chi^2$, we average over quadratic characters, getting 
\begin{equation}\label{psi-av}
\begin{aligned}
P_{\mathscr{D}_1,\mathscr{D}_2}^{\Delta_2}(f_1\otimes f_2)&= \frac{1}{|\ClD[2]|}\sum_{\substack{\psi   \in \ClD^\wedge\\  \psi^2 = 1}} \sum_{\chi\in \ClD^\wedge} W_{\mathscr{D}_1}(f_1,(\chi\psi)^2)   \overline{W_{\mathscr{D}_2}(f_2,\chi\psi)}\\
&=\sum_{\chi\in \ClD^\wedge} W_{\mathscr{D}_1}(f_1,\chi^2)  \frac{1}{|\ClD[2]|}  \sum_{\substack{\psi   \in \ClD^\wedge\\  \psi^2 = 1}} \overline{W_{\mathscr{D}_2}(f_2,\chi\psi)}.
\end{aligned}
\end{equation}
We define $\mathcal{X}$ as in \eqref{def:mathcalX}. Write $P_{\mathcal{X}}$ for the contribution to \eqref{psi-av} of those characters in $\mathcal{X}$ and $P_{\mathcal{X}^c}$ for the contribution to \eqref{psi-av} of those characters not in $\mathcal{X}$. By an argument modelled on the heuristics of Section \ref{sec:heuristic1}, we will show that
\[
P_{\mathcal{X}^c}\ll T_D(\pi_1)^{-1/2},\quad P_{\mathcal{X}}\ll T_D(\pi_1)^{-1/2} \cdot (1 + |\ClD[2]|^{1/2} \cdot D^{c - \delta / 2}).
\]
with $c$ as in the definition of $T_D(\pi_1)$ and $\delta$ as in Theorem \ref{thm12}. Using $|\ClD[2]|\ll_\varepsilon D^\varepsilon$ and taking $c$ small enough, this will finish the proof.

By Cauchy--Schwarz we have
\[
|P_{\mathcal{X}^c}|^2\leq \Big(\sum_{\chi\not\in \mathcal{X}} |W_{\mathscr{D}_1}(f_1,\chi^2)|^2\Big) \Big(\sum_{\chi\notin \mathcal{X}} \Big|\frac{1}{|\ClD[2]|}   \sum_{\substack{\psi  \in \ClD^\wedge\\  \psi^2 = 1}}W_{\mathscr{D}_2}(f_2,\chi\psi) \Big|^2\Big). 
\]
For the first factor, we use $|\mathcal{Q}(\chi^2) - 2T_D(\pi_1)|^2 \gg T_D(\pi_1)^2$ for $\chi\not\in \mathcal{X}$, to get
\[
\sum_{\chi\not\in \mathcal{X}} |W_{\mathscr{D}_1}(f_1,\chi^2)|^2\ll \sum_{\chi\in  \ClD^{\wedge}} |W_{\mathscr{D}_1}(f_1,\chi^2)|^2\frac{|\mathcal{Q}(\chi^2) - 2T_D(\pi_1)|^2}{T_D(\pi_1)^2}.
\]
Changing variables and using positivity,  the last quantity is equal to
\begin{align*}
|\ClD[2]|\sum_{\chi\in  (\ClD^{\wedge})^2}& |W_{\mathscr{D}_1}(f_1,\chi)|^2\frac{|\mathcal{Q}(\chi) - 2T_D(\pi_1)|^2}{T_D(\pi_1)^2}\\
&\leq |\ClD[2]|\sum_{\chi\in  \ClD^{\wedge}} |W_{\mathscr{D}_1}(f_1,\chi)|^2\frac{|\mathcal{Q}(\chi) - 2T_D(\pi_1)|^2}{T_D(\pi_1)^2}\ll \frac{|\ClD[2]|}{T_D(\pi_1)}
\end{align*}
where the last bound follows from   Lemma \ref{lem71}. The presence of the $|\ClD[2]|$ is potentially problematic, but this will be compensated in the second term, thanks to the additional $\psi$-average. Indeed, for the second factor, we extend the sum to all $\chi\in \ClD^\wedge$ by positivity and open up the square to obtain
\begin{displaymath}
\begin{split}
\frac{1}{|\ClD[2]|^2} \sum_{\substack{\psi_1, \psi_2  \in \ClD^\wedge\\  \psi_1^2= \psi_2^2 = 1}} P_{\mathscr{D}_2}(f_2\otimes f_2; \mathscr{O}_E; \psi_1\psi_2^{-1}).
 \end{split}
\end{displaymath}
By Theorem \ref{thm12} we bound this by $|\ClD[2]|^{-1} + O(D^{-\delta}) \ll |\ClD[2]|^{-1}$, as desired. 

On the other hand, by Cauchy--Schwarz
\[
|P_{\mathcal{X}}|^2\leq \Big(\sum_{\chi \in \mathcal{X}} |W_{\mathscr{D}_1}(f_1,\chi^2)|^2 \Big) \Big(\sum_{\chi\in \mathcal{X}} \Big|\frac{1}{|\ClD[2]|}   \sum_{\substack{\psi  \in \ClD^\wedge\\  \psi^2 = 1}}W_{\mathscr{D}_2}(f_2,\chi \psi)\Big|^2\Big).
\]
For the first factor, we extend to all $\chi\in \ClD^\wedge$ by positivity, change variables, and use positivity again to obtain
\[
\sum_{\chi \in \mathcal{X}} |W_{\mathscr{D}_1}(f_1,\chi^2)|^2\leq \sum_{\chi \in \ClD^\wedge} |W_{\mathscr{D}_1}(f_1,\chi^2)|^2\leq |\ClD[2]|\sum_{\chi \in \ClD^\wedge} |W_{\mathscr{D}_1}(f_1,\chi)|^2.
\]
The cuspidal case of Theorem \ref{thm12} then applies, with $\mathfrak{n}=\mathscr{O}_E$, and yields a bound of  $\ll  |\ClD[2]|$. Once again, the factor of $ |\ClD[2]|$ is potentially problematic, but will be compensated by the second factor, thanks to the additional $\psi$-average. Indeed, for the second factor, we use $\mathcal{Q}(\chi^2) \gg T_D(\pi_1)$ for $
\chi \in \mathcal{X}$, so that 
\begin{align*}
 &  \sum_{\chi\in \mathcal{X}} \Big|\frac{1}{|\ClD[2]|}   \sum_{\substack{\psi  \in \ClD^\wedge\\  \psi^2 = 1}} W_{\mathscr{D}_2}(f_2,\chi \psi)\Big|^2  \ll  \sum_{\chi\in \mathcal{X}}  \frac{\mathcal{Q}(\chi^2)^2}{T_D(\pi_1)^2}  \Big|\frac{1}{|\ClD[2]|}   \sum_{\substack{\psi  \in \ClD^\wedge\\  \psi^2 = 1}} W_{\mathscr{D}_2}(f_2,\chi \psi)\Big|^2\\
   &=     \frac{1}{|\ClD[2]|^2 }   \sum_{\substack{\psi_1, \psi_2  \in \ClD^\wedge\\ \psi_1^2 =  \psi_2^2 = 1}}\sum_{\chi\in \ClD^\wedge}W_{\mathscr{D}_2}(f_2,\chi \psi_1)\overline{W_{\mathscr{D}_2}(f_2,\chi \psi_2)} \frac{\mathcal{Q}(\chi^2)^2}{T_D(\pi_1)^2} .  
\end{align*}
We can use Lemma \ref{lem71bis}  to bound the contribution of $\psi_1 = \psi_2$ by  $\ll ( T_D(\pi_1) |\ClD[2]| )^{-1}$. For the contribution $\psi_1 \neq \psi_2$ we open $\mathcal{Q}(\chi^2)$ to get 
\[
\mathcal{Q}(\chi^2)^2 = \sum_{\substack{C_0 \leq p_1, p_2 \leq D^c\\ p_1 = \mathfrak{p}_1 \bar{\mathfrak{p}}_1 \text{ split}\\ p_2 = \mathfrak{p}_2 \bar{\mathfrak{p}}_2 \text{ split}  }} \frac{\lambda_{\pi_1}(p_1)\lambda_{\pi_1}(p_2) (\chi(\mathfrak{p}_1^2) + \bar{\chi}(\mathfrak{p}_1^2))(\chi(\mathfrak{p}_2^2) + \bar{\chi}(\mathfrak{p}_2^2))}{p_1^{1/2}p_2^{1/2}}
\]
and apply Theorem \ref{thm12} termwise with $\mathfrak{n} = \mathfrak{p}_1^2 \mathfrak{p}_2^2$ (so that $[\mathfrak{n}^*]=[\mathfrak{p}_1\mathfrak{p}_2]^2$ is a square class; see the opening remarks of this section) or certain Galois conjugates to show that this contribution is bounded by $T_D(\pi_1)^{-2} D^{2c-\delta}$, as desired.

\section{Distribution of Hecke eigenvalues}\label{sec2}

In this section we prove Theorem \ref{thm15}, using critically the automorphy and cuspidality conditions of symmetric $k$-power lifts from ${\bf GL}_2$, for $k=2,3,4$, by \cite{GJ, KS1, KS2}. We review this information in Section \ref{sec:cuspidality}. The proof also uses a delicate construction of polynomials (in two variables and one variable, according to the two cases in Theorem \ref{thm15}) satisfying a degree restriction and certain positivity requirements. We execute this in Section \ref{sec:polynomials}.

\subsection{Warming up}
Let us briefly illustrate the use of these two ingredients (small degree symmetric power lifts and polynomial expressions in Hecke eigenvalues), without striving for optimality.

Let $\pi_1,\pi_2$ be cuspidal representations of ${\bf PGL}_2/\Q$, satisfying the conditions of part \eqref{Hecke-part1} of Theorem \ref{thm15}. For a small enough $\varepsilon>0$ let $S=\{p\leq X: |\lambda_{\pi_1}(p)-\lambda_{\pi_2}(p)|\geq \varepsilon\}$ and consider the sum
\begin{equation}\label{eq:first-trial}
\sum_{p\in S}\frac{(\lambda_{\pi_1}(p)-\lambda_{\pi_2}(p))^2}{p}.
\end{equation}
Using the Hecke relations and the known analytic properties of
\begin{equation}\label{eq:L-funs1}
\zeta(s),\quad L(s,\pi_1\times \pi_2), \quad\textrm{and}\quad L(s,{\rm sym}^2\pi_i),
\end{equation}
we may show that \eqref{eq:first-trial} is asymptotic to $(2+O(\varepsilon^2))\sum_{p\leq X}p^{-1}$. On the other hand, by Cauchy--Schwarz, one may bound \eqref{eq:first-trial} by
\begin{equation}\label{eq:C-S}
\left(\sum_{p\leq X}\frac{(\lambda_{\pi_1}(p)-\lambda_{\pi_2}(p))^4}{p}\right)^{1/2}\left(\sum_{p\in S}p^{-1}\right)^{1/2}.
\end{equation}
As before, the Hecke relations and the analytic properties of 
\begin{equation}\label{eq:L-funs2}
\zeta(s), \;\; L(s,{\rm sym}^2\pi_i), \;\; L(s,\pi_i\times \pi_j), \;\; L(s,{\rm sym}^2\pi_i\times {\rm sym}^2\pi_j), \;\;\textrm{and}\;\; L(s,\pi_i\times {\rm sym}^3\pi_j)
\end{equation}
imply that \eqref{eq:C-S} is asymptotic to $\left(10\sum_{p\leq X}p^{-1}\right)^{1/2}\left(\sum_{p\in S}p^{-1}\right)^{1/2}$. Putting these bounds together proves a lower bound of $2/5 + O(\varepsilon^2)$ on the Dirichlet density of $S$.

Note that the above argument uses only the information contained in \eqref{eq:L-funs1} and \eqref{eq:L-funs2}, and does not access all of the automorphic information currently available, due to the foundational work of Kim and Shahidi \cite{KS1, KS2}. Indeed, their work establishes the good analytic properties of $L(s,{\rm sym}^k\pi_i)$, for all $k\leq 8$, and $L(s,{\rm sym}^{k_1}\pi_i\times {\rm sym}^{k_2}\pi_j)$, for all $k_1,k_2\leq 4$. Because of this, the above argument is not strong enough to detect the example in Remark \ref{rem:S-T}. (That example saturates the 2/5 bound but does not satisfy all conditions in the statement of Theorem \ref{thm15}.) On the other hand, applying the same argument to a sum of the shape 
\[
\sum_{p\in S}\frac{(\lambda_{\pi_1}(p)-\lambda_{\pi_2}(p))^2(3+\lambda_{\pi_1}(p)\lambda_{\pi_2}(p))}{p},
\]
say, instead of \eqref{eq:first-trial}, yields a lower bound on the Dirichlet density of size $4/7>1/2$, \textit{provided one assumes the conjectural properties of $L(s,{\rm sym}^6 \pi_i\times {\rm sym}^2\pi_j)$ and $L(s,{\rm sym}^5 \pi_i\times {\rm sym}^3\pi_j)$.}

The question is then whether an efficient use of the entirety of the automorphic information provided by the work of Kim and Shahidi is sufficient to go beyond the 1/2-barrier. Our Theorem \ref{thm15} affirms that the answer is yes, although just barely!

\subsection{Cuspidality of symmetric powers}\label{sec:cuspidality}

Theorem \ref{thm15} presupposes certain conditions on the cuspidality and distinctness of the first few symmetric power lifts. That these conditions apply to our situation follows from the following lemma.

\begin{lemma}\label{lem22} $\,$
\begin{enumerate}
\item\label{cuspidality} Let $\sigma\subset L^2([\bG(\A)])$ have almost maximal level invariants $\sigma^K\neq 0$, as defined in Section \ref{secgeneral}. Let $\pi={\rm JL}(\sigma)$ be its Jacquet--Langlands lift to ${\bf PGL}_2/\Q$. Then ${\rm sym}^k \pi$ is cuspidal for $2 \leq k \leq 4$.

\item\label{distinct-sym} For $i=1,2$ let $\pi_i={\rm JL}(\sigma_i)$ as in part \eqref{cuspidality}, where $\bG_1\not\simeq\bG_2$. Then ${\rm sym}^k \pi_1\not\simeq {\rm sym}^k \pi_2$ for $1 \leq k \leq 4$.
\end{enumerate}
\end{lemma}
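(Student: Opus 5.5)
We prove part \eqref{cuspidality} via the standard cuspidality criteria for symmetric power lifts, and part \eqref{distinct-sym} by comparing local components at primes where exactly one of the algebras ramifies. For \eqref{cuspidality}, recall the known criteria. By Gelbart--Jacquet, ${\rm sym}^2\pi$ is cuspidal if and only if $\pi$ is not dihedral, i.e.\ $\pi\not\simeq\pi\otimes\eta$ for any nontrivial quadratic character $\eta$. By Kim--Shahidi, ${\rm sym}^3\pi$ is cuspidal unless $\pi$ is dihedral or tetrahedral. By Kim, ${\rm sym}^4\pi$ is cuspidal unless $\pi$ is dihedral, tetrahedral or octahedral. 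Each of these exceptional types forces $\pi$ to be a twist-invariant or ``of solvable polyhedral type'' representation, and all such representations have the same feature: their local components are either unramified principal series or dihedral-type supercuspidals or principal series induced from characters. None of them is a twist of the Steinberg representation at any finite place.

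Now use the almost maximal level hypothesis. If $\bB$ is non-split, it ramifies at some finite prime $p$, since a definite algebra is ramified at an odd number of finite places and an indefinite non-split algebra at an even, positive number. Because $\sigma_p$ is invariant under $\bG(\Z_p)$, the index-$2$ subgroup of $\bG(\Q_p)$, $\sigma_p$ is a character of $\bG(\Q_p)$ trivial on that subgroup. Its Jacquet--Langlands transfer $\pi_p$ is then the Steinberg representation or its unramified quadratic twist. This excludes dihedral $\pi$: a Steinberg local component is not of Galois type and is not invariant under twisting by the quadratic character of a field in which the relevant Weil representation is induced. It likewise excludes tetrahedral and octahedral $\pi$, since the local Langlands parameters of those are finite-image Weil representations, whereas a Steinberg parameter has nontrivial monodromy. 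This proves cuspidality whenever $\bB$ is non-split. The main subtlety is the split case $\bB={\rm Mat}_2$, where $\pi$ has level one. For that case I would argue that a level-one form cannot be dihedral. Indeed, $\pi\simeq\pi\otimes\eta$ forces $\eta$ to be unramified everywhere, and no nontrivial quadratic character of $\Q$ is unramified everywhere. The polyhedral cases are then excluded through the corresponding twist-invariance of ${\rm sym}^2\pi$ (for tetrahedral, ${\rm sym}^2\pi\simeq{\rm sym}^2\pi\otimes\omega$ with a cubic $\omega$) or of ${\rm sym}^3\pi$ (for octahedral, a quadratic twist). The same unramified-everywhere argument applies, since the twisting character has conductor dividing a power of the conductor of $\pi$, here $1$. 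I expect this local bookkeeping, rather than any global input, to be the main point needing care.

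For \eqref{distinct-sym}, since $\bB_1\not\simeq\bB_2$ there is a finite prime $p$ at which exactly one of them, say $\bB_1$, ramifies. Then $\pi_{1,p}$ is a twist of Steinberg by an unramified character $\mu$ with $\mu^2=1$, while $\pi_{2,p}$ is unramified. The local parameter of ${\rm sym}^k\pi_{1,p}$ is $\mu^k\otimes{\rm Sym}^k$ of the $SL_2$ monodromy, which has nontrivial monodromy for $k\ge1$. In contrast, ${\rm sym}^k\pi_{2,p}$ is unramified with trivial monodromy. By local--global compatibility of the Kim--Shahidi lifts at all places, ${\rm sym}^k\pi_1$ and ${\rm sym}^k\pi_2$ have nonisomorphic components at $p$, hence are distinct for $1\le k\le 4$. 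Nothing further is needed here beyond the compatibility of the symmetric power lifts with local Langlands, which is part of the cited theorems.
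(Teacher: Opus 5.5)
Your proof is correct. For non-split $\bB$ and for part (2) it is the paper's argument. At $p\in{\rm Ram}(\bB)$ the component $\pi_p$ is an unramified quadratic twist of Steinberg, whose Weil--Deligne parameter has $N\neq 0$, so $\pi$ cannot be of dihedral or polyhedral (Artin) type. For (2) one compares $({\rm sym}^k\pi_1)_p$ and $({\rm sym}^k\pi_2)_p$ at a finite prime ramified in exactly one of the two algebras; such a prime exists because a quaternion algebra over $\Q$ is determined by its set of finite ramified primes.

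Where you genuinely differ is the split case $\bB={\rm Mat}_2$. The paper's proof argues only from a ramified prime, so it never addresses level one. That case is nevertheless used: for $\nu=1$ with $\bB_2={\rm Mat}_2$ and $f_2$ cuspidal, part (1) of Theorem~\ref{thm15} is applied to the level-one $\pi_2$. Your argument there is correct and fills this gap. The twisting character in the Gelbart--Jacquet/Kim--Shahidi criteria must be unramified at every finite place, hence trivial over $\Q$. This uses only the twist-invariance form of the criteria.

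The same argument in fact works uniformly. At $p\in{\rm Ram}(\bB)$ the components $\mu\otimes{\rm St}$, ${\rm St}_3$ and $\mu\otimes{\rm St}_4$ of $\pi$, ${\rm sym}^2\pi$ and ${\rm sym}^3\pi$ are invariant under a twist only if the twisting character is trivial at $p$. So you could drop the appeal to the identification of polyhedral-type forms with Artin representations altogether.

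Two minor inaccuracies. First, your list of possible local components of polyhedral-type forms omits the non-dihedral supercuspidals that can occur at $p=2$; the conclusion that none is a twisted Steinberg is still right. Second, invariance of a Steinberg component under a quadratic twist is not by itself a contradiction, since $\eta_p$ may be trivial at $p$. It is the monodromy argument, or global unramifiedness, that does the work.
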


\begin{proof} 
Let $\pi\simeq\otimes'_v\pi_v$ be as in part \eqref{cuspidality}. Note that for $p\in {\rm Ram}(\bB)$ the local component $\pi_p={\rm JL}(\sigma_p)$ is a twisted Steinberg representation of $\mathbf{PGL}_2(\Q_p)$. Indeed, at such places $K_p$ is an index 2 subgroup of the compact group $\bG(\Q_p)$, so that the almost maximal level invariance at $p\in {\rm Ram}(\bB)$ implies that $\sigma_p$ is a character (trivial or quadratic) of $\bG(\Q_p)$. Under the Jacquet--Langlands correspondence, characters go to twisted Steinberg representation.

On the other hand, by the work of \cite{KS2}, the cuspidal automorphic representations $\pi$ of ${\bf GL}_2/\Q$ for which all ${\rm sym}^k\pi$, for all $k=2,3,4$, are cuspidal are precisely those which do not correspond to an Artin representation of solvable polyhedral type. It therefore suffices to observe that an automorphic Artin representation can never have a twisted Steinberg representation as a local component. 

To see the latter claim, we argue by the local Langlands correspondence \cite{BuHe}, according to which smooth irreducible complex representations of ${\bf GL}_2(\Q_p)$ correspond with semisimple 2-dimensional
Weil--Deligne representations of the Weil group $W_p$. The latter are given by pairs $(\rho, N)$, with $\rho$ a representation of $W_p$ and $N$ a nilpotent endomorphism verifying a compatibility condition. The important point is that in this correspondence the Steinberg representation and its twists are precisely those representations of ${\bf GL}_2(\Q_p)$ whose associated Weil--Deligne representation has a non-trivial $N$.

Now given a (global) 2-dimensional Artin representation $r: {\rm Gal}(\bar{\Q}/\Q)\rightarrow {\bf GL}_2(\C)$ we obtain a 2-dimensional (local) ``Weil group representation
of Galois-type" simply by restricting to $W_p$ and setting $N=0$. As already noted, $(r|_{W_p},0)$ cannot be mapped via the local Langlands correspondence to a twisted Steinberg representation.

For part \eqref{distinct-sym}, since $\sigma_i$ has almost maximal level, the conductor of $\pi_i={\rm JL}(\sigma_i)$ is $N_i=\prod_{p\in {\rm Ram}(\bB_i)}p$. Since $\bG_1\not\simeq \G_2$, there is $p$ with $p\mid N_1$ but $p\nmid N_2$, so that $({\rm sym}^k\pi_1)_p$ is ramified while $({\rm sym}^k\pi_2)_p$ is not.
\end{proof}

\subsection{A linear programming problem}\label{sec:polynomials}

Let $C(i) = \binom{2i}{i} -\binom{2i}{i+1} $ be the $i$-th Catalan number.

\begin{lemma}\label{lem:poly}
There exists $f  = \sum_{i, j} a_{i, j} x^i y^j \in \R[x, y]$ with the following properties:
\begin{enumerate}
\item\label{small-degree} $a_{i, j} = 0$ unless $0 \leq i, j \leq 4$ or $ij = 0, 0 \leq i, j \leq 8$;
\item\label{half} we have $\sum_{i, j} a_{2i, 2j} C(i) C(j)<1/2$;
\item\label{pos} $f(x, y) \geq 0$ for all $x, y \in \R$;
\item\label{diag} there is $\varepsilon>0$ such that $f(x, y) \geq 1$ whenever $|x- y|\leq \varepsilon$.
\end{enumerate}
In fact, the upper bound in \eqref{half} can be taken to be $0.492$.
\end{lemma}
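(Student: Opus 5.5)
The plan is to read the statement as a linear program against the joint Sato--Tate measure and to produce an explicit certified solution.

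If $\mu_{ST}$ is the semicircle law on $[-2,2]$, its even moments are $\int x^{2i}\,d\mu_{ST}=C(i)$ and its odd moments vanish. Condition \eqref{half} therefore says $\int f\,d(\mu_{ST}\otimes\mu_{ST})<1/2$. It is convenient to expand $f$ in the Chebyshev basis $U_i(x/2)U_j(y/2)$, which are the traces of $\mathrm{sym}^i\otimes\mathrm{sym}^j$. Then \eqref{small-degree} means that only $\mathrm{sym}^i\pi_1\times\mathrm{sym}^j\pi_2$ with $i,j\le4$, or a single $\mathrm{sym}^k$ with $k\le 8$, occur, and \eqref{half} is a bound on the coefficient of $U_0U_0$. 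The task is thus: minimize a linear functional over a finite-dimensional cone of polynomials that are nonnegative on $\R^2$ and at least $1$ on the diagonal.

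To make \eqref{pos} tractable, I would impose it through a sum-of-squares certificate. I would write $f=\sum_k q_k^2+\sigma$, or $f=v^{T}Gv$ with $G\succeq0$, where $v$ runs over monomials $x^ay^b$ chosen so that every product $v_kv_l$ lies in the allowed support: mixed terms need $a,b\le2$, and pure terms need degree $\le4$ in one variable. The diagonal condition \eqref{diag} I would reduce to the one-variable condition $f(x,x)\ge1+\eta$ for all $x\in\R$, for some $\eta>0$. This one-variable condition I would also impose as a univariate SOS constraint, namely that $f(x,x)-1-\eta$ is a sum of squares in $x$, which is a degree-$8$ univariate polynomial.

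Passing from the diagonal to the $\varepsilon$-strip then works as follows. The top-degree part of $f$ along the diagonal direction comes from the pure terms $x^8$ and $y^8$, so I would include a small multiple of $x^8+y^8$ in $f$. This forces $f\to\infty$ uniformly on a strip $|x-y|\le1$ as $|x|\to\infty$. On the remaining compact piece of the strip, uniform continuity gives $f\ge1$ for $|x-y|\le\varepsilon$, with $\varepsilon$ small depending on $\eta$ and on bounds for $\partial_y f$.

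The resulting semidefinite program (minimize $\sum a_{2i,2j}C(i)C(j)$ subject to both SOS constraints) I would solve numerically. A natural starting ansatz, guided by the warm-up computation, is something like $(x-y)^2$-penalized expressions, e.g.\ $f=1-\alpha(x-y)^2+\ldots$ corrected by $\mathrm{sym}^{\le 4}$ cross terms. I would then let the solver optimize freely, expecting an optimum slightly below $1/2$ (around $0.49$). Finally I would round the Gram matrices to rationals, keeping them strictly positive definite by a small diagonal shift, recompute $f$ exactly, and verify the three conditions in exact arithmetic. This exact verification is the piece suited to a formal check in Lean.

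The main obstacle is numerical tightness. The margin below $1/2$ is tiny, and it may be that pure SOS is too weak a certificate for global nonnegativity under such a restricted support, even though the LP optimum itself is below $1/2$. If so, I would first try enlarging the certificate with a denominator, i.e.\ requiring $(1+x^2+y^2)f$ to be SOS, and second try splitting $\R^2$ into regions handled by Positivstellensatz multipliers.
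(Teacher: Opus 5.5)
Your overall plan matches the paper's proof. It has the same components: an explicit $f$ found by semidefinite programming, a sum-of-squares certificate for \eqref{pos}, a univariate certificate for $f(x,x)-1-\eta$ (the paper writes $f(x,x)=1+10^{-4}+\sum_{i\le 3}q_i(x)^2$), and derivative and growth bounds to pass from the diagonal to a strip. The gap is in the rule you use to choose the Gram basis, and it is fatal rather than a matter of numerical tightness.

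Your listed monomials do not satisfy your own requirement that every product $v_kv_l$ lie in the support ($x^4\cdot xy=x^5y$, $x^3\cdot x^2y=x^5y$). Suppose you enforce the requirement while keeping $x^4,y^4$ in $v$, which is the only way to produce the $x^8,y^8$ terms you want for growth. Then no monomial $x^ay^b$ with $a,b\ge 1$ is admissible, so $f=\sum_k(g_k(x)+h_k(y))^2$. For such $f$, condition \eqref{half} is impossible. Write $g_k=a_k+g_k^0$ and $h_k=b_k+h_k^0$, where $g_k^0,h_k^0$ have mean zero for the semicircle law $\mu_{ST}$, and take norms in $L^2(\mu_{ST})$. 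Then
\begin{align*}
\int f\,d(\mu_{ST}\otimes\mu_{ST})&=\sum_k\Big((a_k+b_k)^2+\|g_k^0\|^2+\|h_k^0\|^2\Big)\\
&\ge\frac12\sum_k\Big((a_k+b_k)^2+\|g_k^0+h_k^0\|^2\Big)=\frac12\int f(t,t)\,d\mu_{ST}(t)\ge\frac12,
\end{align*}
using $\|g^0+h^0\|^2\le 2\|g^0\|^2+2\|h^0\|^2$ and $f(t,t)\ge 1$. So no solver run in this cone can get below $1/2$.

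The fix, which is what the paper's certificate does, is to use all fifteen monomials of total degree $\le 4$ as the Gram basis. You then impose linear constraints forcing the off-support coefficients of $v^{T}Gv$ to cancel. In the paper's decomposition of $u$ into $31$ squares, individual terms such as $(x^4-x^3y)^2$, $(x^2y^2+2y^3)^2$ or $p_1^2$ contain $x^7y$, $x^6y^2$ and $x^2y^5$, which cancel only in the total.

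The mixed basis monomials are also what allow the large $x^4y^4$ coefficient of $f$, about $8.6$ times the $x^8$ coefficient. A pure-power Gram matrix forces $b^2\le 4ac$ for the top form $ax^8+bx^4y^4+cy^8$, so it cannot produce this. That coefficient is the cheap source of $t^8$ growth on the diagonal: it costs $C(2)^2=4$ in \eqref{half} rather than $C(4)=14$.

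Finally, the lemma is exactly the existence of such a certificate. So ``expecting an optimum around $0.49$'' is not yet a proof. The argument is complete only once an explicit $f$ and its exact certificates are written down, as the paper does.
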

\begin{proof}
Let 
\begin{equation}\label{deff}
f(x,y) = \frac{1}{20^{10}}(u(x, y) + u(-x, -y))
\end{equation}
where
\begin{flalign*}
  u(x,y) & =  41813434533 x^8 + 359512561893 x^4 y^4 
          + 41790603610 y^8 + 1371693928 x^7 \\ &- 407914952 x^4 y^3 
          + 409594776 x^3 y^4 - 1371845320 y^7 - 152799075816 x^6 \\
         & - 1238844857440 x^4 y^2 + 914071084096 x^3 y^3 - 1238781629424 x^2 y^4 - 152613570520 y^6 \\
         & - 10586722304 x^5 + 155563456 x^4 y - 2039508160 x^3 y^2 + 2035437600 x^2 y^3\\
         &  - 167534816 x y^4  
           + 10587831584 y^5 + 655694115936 x^4 - 2074041622592 x^3 y \\
         & + 5223381207392 x^2 y^2 - 2074067626368 x y^3 + 655195946720 y^4 + 25484108416 x^3\\
         &  - 2399492480 x^2 y + 2432177280 x y^2 - 25477793408 y^3 - 3336702739328 x^2\\
         &  + 5796896462336 x y 
          - 3336250252672 y^2 - 16640770048 x + 16623868928 y \\
          & + 5217874549248,
\end{flalign*}
which clearly verifies \eqref{small-degree}. A numerical computation shows that
\[
\sum_{i, j} a_{2i, 2j} C(i) C(j)= \frac{1258136733059}{2560000000000 } = 0.4914\ldots
\] 
{\tt Mathematica} tells us\\

\noindent {\tt  In[1] := NMinimize[f[x, y], \{x, y\}]\\
Out[1] := \{0.000147597, \{x -> 1.96743, y -> 0.756825\}\} }\\

\noindent which establishes \eqref{pos}. Furthermore,\\

\noindent {\tt  In[2] := NMinimize[f[x, x], x]\\
Out[2] := \{1.0001, \{x -> 0.502488\}\} }.\\

\noindent By continuity, there exists some $\varepsilon > 0$, such that \eqref{diag} is satisfied.  
\end{proof}

In Sections \ref{sec:pos}-\ref{sec:diag} we provide direct, non computer-assisted, proofs of both \eqref{pos} and \eqref{diag}. For the reader's convenience, Lemma \ref{lem:poly} has been formalized in Lean4. The code is available at
\href{https://github.com/maksym-radziwill/BBR}{https://github.com/maksym-radziwill/BBR}. 

\medskip

The following one-variable version is much simpler. 
\begin{lemma}\label{polyh}
There exists $h  = \sum_i a_ix^i  \in \R[x]$ with the following properties:
\begin{enumerate}
\item ${\rm deg}(h)\leq 8$;
\item\label{half2} we have $\sum_i a_{2i} C(i) <1/2$;
\item $h(x) \geq 0$ for all $x \in \R$;
\item there is $\varepsilon>0$ such that $h(x) \geq 1$ whenever $|x|\leq \varepsilon$.
\end{enumerate}
In fact, the upper bound in \eqref{half2} can be taken to be $.42$.
\end{lemma}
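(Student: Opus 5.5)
The plan is to write down an explicit $h$ of the form $h(x)=\kappa\,q(x^2)^2$, with $q$ a real quadratic polynomial and $\kappa$ a constant slightly larger than $1$. Then $\deg h\le 8$ and $h\ge 0$ on $\R$ hold automatically. Property (2) becomes a quadratic minimisation in the two free coefficients of $q$. The Catalan weights have a clean interpretation: $C(i)=\int x^{2i}\,{\rm d}\mu_{\rm ST}$ is the $2i$-th moment of the Sato--Tate measure on $[-2,2]$. So $\sum_i a_{2i}C(i)$ is simply the Sato--Tate expectation of $h$ (odd moments vanish), and we only need the even moments $1,1,2,5,14$ of $x^0,x^2,\dots,x^8$.

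First step: normalise $q(t)=1-at+bt^2$, so that $q(0)=1$. Expanding $q(t)^2=1-2at+(a^2+2b)t^2-2abt^3+b^2t^4$ with $t=x^2$, the Sato--Tate expectation of $q(x^2)^2$ is
\[
F(a,b)=1-2a+2(a^2+2b)-10ab+14b^2 .
\]
Setting the partial derivatives to zero gives $4a-10b=2$ and $-10a+28b=-4$. The solution is $b=1/3$, $a=4/3$, where $F(4/3,1/3)=1/3$. So $q(t)=1-\tfrac43 t+\tfrac13 t^2=\tfrac13(1-t)(3-t)$, which suggests
\[
h(x)=\frac{1+\eta}{9}\,(1-x^2)^2(3-x^2)^2 .
\]
For comparison, the naive choice $(1-x^2/2)^2$ gives exactly $1/2$, which is not enough; this is why we use the quartic $q$.

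Second step: verify the properties. Nonnegativity is clear since $h$ is a square times a positive constant. The quantity in (2) equals $(1+\eta)/3$, which is below $0.42$ for any $0<\eta<0.26$, for instance $\eta=1/4$. For (4), note that $h(0)=1+\eta>1$ and $h$ is continuous, so there is $\varepsilon>0$ with $h(x)\ge 1$ for $|x|\le\varepsilon$.

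The only point needing care is this last one. With $\eta=0$ we would have $h(0)=1$, and since $q(x^2)$ decreases for small $x$, $h$ would dip below $1$ immediately. The slack $\eta>0$ is exactly what makes (4) hold, and it costs nothing because the optimum $1/3$ leaves plenty of room under $0.42$. The rest is routine arithmetic with the moments $C(0),\dots,C(4)=1,1,2,5,14$.
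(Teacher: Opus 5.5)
Your proposal is correct and is essentially the paper's proof. The paper takes $h(x)=\bigl(\tfrac{10}{9}-\tfrac43x^2+\tfrac13x^4\bigr)^2=\bigl(q(x^2)+\tfrac19\bigr)^2$, with the same optimal quartic $q$ that your Sato--Tate least-squares computation produces. The only difference is where the slack sits: the paper adds it inside the square, giving $34/81\approx 0.419$, whereas you use the multiplicative factor $1+\eta$, giving $(1+\eta)/3$.
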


\begin{proof}
Define the degree 8 polynomial
\[
h(x) = \frac{100}{81} - \frac{80}{27} x^2 + \frac{68}{27} x^4 - \frac{8}{9} x^6  + \frac{1}{9}x^8  =  \Big(\frac{10}{9} - \frac{4}{3} x^2 + \frac{1}{3}x^4\Big)^2.
\]
Then $h$ is clearly non-negative and satisfies
\[
\frac{100}{81} - \frac{80}{27} \cdot 1+ \frac{68}{27} \cdot 2 - \frac{8}{9} \cdot 5  + \frac{1}{9} \cdot 14 =  \frac{34}{81} = 0.419\ldots
\]
Finally, $h(0) = 10/9 > 1$, and the existence of $\varepsilon>0$ as in the last point follows from continuity.
\end{proof}

\subsection{Non-negativity}\label{sec:pos}
Here we verify directly property \eqref{pos} of Lemma \ref{lem:poly}. Throughout this section we denote by $f$ the polynomial defined in \eqref{deff} and the large equation following that.

\begin{lemma}
  The polynomial $f$ is non-negative.
\end{lemma}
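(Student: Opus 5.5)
We want a direct, human-checkable proof that $f(x,y)=20^{-10}(u(x,y)+u(-x,-y))\ge 0$ on $\R^2$. The Mathematica output places the global minimum near $0.00015$, attained around $(x,y)\approx(1.967,0.757)$ and its reflection. So there is almost no slack, and any argument must be essentially exact near that point.

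Since $f$ is even under $(x,y)\mapsto(-x,-y)$, only the monomials of even total degree in $u$ survive. The first step is to write out this even polynomial $g=u(x,y)+u(-x,-y)$ explicitly: twice the even-degree part of $u$, of degree $8$. The natural strategy is then a \emph{sum-of-squares certificate}. We look for a positive semidefinite Gram matrix $Q$ with $g=m^{T}Qm$, where $m$ is a monomial vector of even total degree: $1,x^2,xy,y^2,x^4,x^3y,x^2y^2,xy^3,y^4$. We must check that each product of two monomials lies in the allowed support, i.e.\ degree at most $4$ in each variable, or pure powers up to degree $8$. For instance $x^4\cdot y^4$ is fine, but $x^4\cdot x^3y=x^7y$ is not, so the corresponding Gram entries must vanish; this shrinks the admissible monomial set. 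Concretely, I would obtain $Q$ numerically by semidefinite programming, round it to a rational matrix reproducing the coefficients of $g$ exactly, and verify positive semidefiniteness by an exact $LDL^{T}$ factorisation. That factorisation yields $g=\sum_k d_k\ell_k(x,y)^2$ with rational $d_k\ge0$ and explicit polynomials $\ell_k$, which is a certificate any reader can check by expansion.

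If an exact SOS representation fails because the minimum is so close to zero, the fallback is a \emph{region decomposition}:
\begin{itemize}
\item \emph{Far from the origin} ($|x|+|y|\ge R$): the dominant octic part $x^8,x^4y^4,y^8$ with positive coefficients beats all lower-order terms, via AM--GM estimates such as $|x^ay^b|\le \tfrac{a}{8}x^8+\tfrac{b}{8}y^8$ for degree-$8$ mixed terms and cruder bounds for lower degrees.
\item \emph{Compact region}: use interval arithmetic or a Taylor bound on a grid, made rigorous by a Lipschitz constant for $g$. Near the two near-minimisers, use the positive definite Hessian, checked from its explicit entries, together with a third-order remainder bound.
\end{itemize}

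The main obstacle is the tightness near $(1.967,0.757)$, where the value $1.5\cdot10^{-4}$ must survive rounding. In the SOS route this means the rational rounding of $Q$ must preserve positive semidefiniteness, so I would first try $Q$ with a small strictly positive minimum eigenvalue, then project onto the affine space of exact coefficient matches. In the fallback route the grid spacing near the minimiser must be very fine, and the local quadratic argument is what makes this feasible.
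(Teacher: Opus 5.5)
Your primary route is the paper's: the lemma is proved by exhibiting an explicit sum-of-squares certificate. The paper does not symmetrize, though. It writes $u$ itself as $2\sum_{i=1}^{15}p_i^2+6p_5^2+c$, with $c$ a positive combination of sixteen squared binomials. This proves the stronger fact $u\ge 0$, from which $f\ge 0$ follows at once.

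Your SDP set-up has a flaw that could make this route fail. For the even polynomial $g=u(x,y)+u(-x,-y)$, any Gram matrix can be averaged under $(x,y)\mapsto(-x,-y)$. The result splits into an even-parity block and an odd-parity block, the latter on the monomials $x,y,x^3,x^2y,xy^2,y^3$. The odd block cannot simply be discarded. Your monomial vector asks for $g$ to be a sum of squares of \emph{even} polynomials, which is a strictly smaller cone; already $x^2+y^2$ is not of this form.

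The certificate the paper found does not fit your ansatz. Symmetrizing its identity gives $g=4\sum_i\big((p_i^{\rm e})^2+(p_i^{\rm o})^2\big)+12\big((p_5^{\rm e})^2+(p_5^{\rm o})^2\big)+c(x,y)+c(-x,-y)$. The odd parts are substantial: for instance $p_5^{\rm o}=101755x^3+4272x^2y+4262xy^2+101760y^3-306992x-307004y$ dwarfs $p_5^{\rm e}$. With a minimum as tight as $1.5\cdot 10^{-4}$, you have no guarantee that the even-only SDP is feasible. Either include the odd block, or certify $u$ directly over all monomials of degree $\le 4$ as the paper does; the paper's identity shows that this SDP is feasible.

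Two smaller corrections. First, the support condition does not shrink the monomial vector. The Newton polytope of $g$ is the full triangle with vertices $(0,0),(8,0),(0,8)$, since $(4,4)$ lies on its edge. So $x^3y$ and $xy^3$ are admissible, and they do occur in the $p_i$. Vanishing coefficients only impose linear relations on the Gram entries, such as $Q_{x^4,x^3y}=0$ and $2Q_{x^4,x^2y^2}+Q_{x^3y,x^3y}=0$.

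Second, in your interval-arithmetic fallback (which is sound in principle), note that $g$ is nearly symmetric under $x\leftrightarrow y$; compare its coefficients of $x^{2k}$ and $y^{2k}$. Consequently $f$ is also only about $1.7\cdot 10^{-4}$ near $\pm(0.757,1.967)$, so the fine local analysis is needed at four points, not two.
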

\begin{proof}
  This follows because we can write $u$ as a sum of thirty-one squares of polynomials.
  Precisely,
  \[
  u(x,y) = 2\sum_{i = 1}^{15} p_{i}(x,y)^{2} + 6 \cdot p_{5}(x,y)^{2} + c(x,y)
  \]
  where
  \begin{flalign*}
    c(x,y) & = 144792 (x^4 - x^3 y)^2 + 173336 (x^4 + x^2 y^2)^2 + 106511 (x^4 - x^2 y^2)^2 \\
    & + 219778 (x^4 - x y^3)^2 + 177162 (x^4 - 2 x^2 y)^2 + 293174 (x^4 + 2 x y^2)^2 \\
    & + 230298 (x^4 + 4 x y)^2 + 68230 (x^3 y - y^4)^2 + 157876 (x^2 y^2 + y^4)^2 \\
    & + 147938 (x^2 y^2 - y^4)^2 + 774352 (x^2 y^2 + 2 y^3)^2 + 580764 (x^2 y^2 - 2 y^3)^2 \\
    & + 76098 (x y^3 - y^4)^2 + 20320 (y^4 + 2 x y^2)^2 + 40640 (y^4 - 2 x y^2)^2 
     + 56386 (y^4 + 4 x y)^2 
  \end{flalign*}
  and the polynomials $p_{i}(x,y)$ are given as follows
  \begin{flalign*}
    p_{1}(x,y) & = 110664 x^4 - 62436 x^3 y - 125829 x^2 y^2 - 62424 x y^3 + 110640 y^4 + 872 x^3 + 364 x^2 y& \\
               &  - 346208 x^2 + 111564 x y                 
                 - 346144 y^2 - 2208 x + 2200 y + 369968 &
  \end{flalign*}
    \begin{flalign*}
      p_{2}(x,y) & = 72082 x^4 + 115475 x^3 y + 85682 x^2 y^2 + 115539 x y^3 + 72018 y^4 + 262 x^3 - 222 x^2 y & \\
                 & + 224 x y^2 - 262 y^3 - 301192 x^2  
                 - 566348 x y - 301000 y^2 - 1264 x + 1256 y + 123456 &
    \end{flalign*}
    \begin{flalign*}
      p_{3}(x,y) & = 55620 x^4 - 53290 x^3 y - 10 x^2 y^2 + 53273 x y^3 - 55648 y^4  - 3232 x^3 + 776 x^2 y& \\
                 & + 776 x y^2 - 3232 y^3 - 167796 x^2  
                  + 60 x y + 167896 y^2 + 8488 x + 8488 y - 16 &
    \end{flalign*}
\begin{flalign*}
  p_{4}(x,y) & = 19116 x^4 + 81137 x^3 y - 291925 x^2 y^2 + 81119 x y^3 + 19126 y^4  - 432 x^3 + 370 x^2 y& \\
             & - 370 x y^2 + 434 y^3 + 547060 x^2  
              - 702352 x y + 547024 y^2 + 2032 x - 2032 y - 1567392 &
\end{flalign*}
\begin{flalign*}
  p_{5}(x,y) & = 1021 x^4 - 503 x^3 y + 503 x y^3 - 1022 y^4 + 101755 x^3 + 4272 x^2 y + 4262 x y^2& \\
             &  + 101760 y^3 - 3670 x^2 + 3672 y^2  
               - 306992 x - 307004 y &
\end{flalign*}
\begin{flalign*}
  p_{6}(x,y) & = 462 x^4 + 1121 x^3 y - 1122 x y^3 - 463 y^4 - 406 x^3 + 2114 x^2 y + 2114 x y^2& \\
             &  - 404 y^3 - 3192 x^2 + 3192 y^2 
              - 1544 x - 1544 y &
\end{flalign*}
\begin{flalign*}
  p_{7}(x,y) & = 368 x^4 + 16 x^3 y + 283 x^2 y^2 + 16 x y^3 + 368 y^4+ 54 x^3 + 52 x^2 y  & \\
             & - 52 x y^2 - 56 y^3 + 884 x^2  
               + 1220 x y + 884 y^2 - 56 x + 56 y - 9664
\end{flalign*}
\begin{flalign*}
  p_{8}(x,y) & = 362 x^4 + 863 x^3 y - 863 x y^3 - 362 y^4 + 528 x^3  - 2896 x^2 y - 2896 x y^2 + 530 y^3& \\
             & - 2524 x^2 + 2524 y^2               
               + 2272 x + 2272 y
\end{flalign*}
\begin{flalign*}
  p_{9}(x,y) & = 222 x^4 + 63 x^3 y - 62 x y^3 - 222 y^4 + 22 x^3 + 8 x^2 y + 8 x y^2 + 22 y^3 & \\
             & + 860 x^2 + 4 x y  
               - 860 y^2 + 120 x + 120 y + 16
\end{flalign*}
\begin{flalign*}
  p_{10}(x,y) & = 61 x^4 - 17 x^3 y + 37 x^2 y^2 - 17 x y^3 + 61 y^4 + 4 x^3 - 4 x^2 y + 4 x y^2& \\
              &  - 2 y^3 + 312 x^2  
               - 76 x y + 312 y^2 + 24 x - 24 y + 2208
\end{flalign*}
\begin{flalign*}
  p_{11}(x,y) & = 15 x^4 - x^3 y + 11 x^2 y^2 - x y^3 + 14 y^4  - 536 x^3 - 988 x^2 y & \\
              &+ 988 x y^2 + 536 y^3 + 28 x^2 
                + 48 x y + 24 y^2 - 136 x + 128 y - 512
\end{flalign*}
\begin{flalign*}
  p_{12}(x,y) & = 15 x^4 - 4 x^3 y + 13 x^2 y^2 - 4 x y^3 + 15 y^4  - 1582 x^3 + 808 x^2 y - 808 x y^2& \\
              & + 1582 y^3 + 20 x^2  
              + 24 x y + 20 y^2 + 6616 x - 6616 y - 432
\end{flalign*}
\begin{flalign*}
  p_{13}(x,y) & = 10 x^4 + 3 x^3 y - 4 x y^3 - 10 y^4 - 266 x^3  - 118 x^2 y - 118 x y^2 - 266 y^3& \\
              & + 36 x^2 - 4 x y 
                - 36 y^2 - 1440 x - 1432 y
\end{flalign*}
\begin{flalign*}
  p_{14}(x,y) & = 5 x^4 + 199 x^3 y - 99 x^2 y^2 + 199 x y^3 + 7 y^4  + 12 x^3 - 12 x^2 y + 10 x y^2 & \\
              &- 14 y^3 - 20 x^2  
               + 1128 x y - 12 y^2 + 88 x - 96 y + 1776
\end{flalign*}
\begin{flalign*}
  p_{15}(x,y) & = 3 x^4 + 5 x^3 y - x^2 y^2 + 5 x y^3 + 3 y^4  - 224 x^3 + 130 x^2 y - 130 x y^2& \\
              & + 224 y^3 + 12 x^2 
              + 32 x y + 12 y^2 - 1112 x + 1112 y + 128
\end{flalign*}
This concludes the proof of the lemma.
  \end{proof}

\subsection{Diagonal lower bound}\label{sec:diag}

Now we verify directly property \eqref{diag} of Lemma \ref{lem:poly}. We again let $f$ denote the polynomial defined in \eqref{deff} and the large equation following that.

\begin{lemma}
We have
  \[
  |f(x,y)| < 7 \cdot (2 + x^{8} + y^{8})
  \]
  and
  \[
  \Big | \frac{\partial f}{\partial x}(x, y) \Big | \leq 56 \cdot (2 + x^{8} + y^{8}) \ , \ \Big | \frac{\partial f}{\partial y} (x, y)\Big | \leq 56 \cdot (2 + x^{8} + y^{8}).
  \]
\end{lemma}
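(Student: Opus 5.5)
The plan is to bound $f$ and its partial derivatives term by term, using the explicit coefficients of $u$. Since $f(x,y)=20^{-10}(u(x,y)+u(-x,-y))$, the odd-degree monomials of $u$ cancel. Hence $f = 2\cdot 20^{-10}\,u_{\rm even}$, where $u_{\rm even}$ keeps only the monomials of total degree $0,2,4,6,8$. Similarly $\partial_x f = 2\cdot 20^{-10}\,\partial_x u_{\rm even}$, and likewise for $\partial_y f$.

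The key elementary input is the monomial bound
\[
|x^iy^j| \le 1 + x^8 + y^8 \le 2 + x^8 + y^8, \qquad i+j\le 8,
\]
with $i,j\ge 0$. To see this, note that by weighted AM--GM, $|x|^i|y|^j \le \frac{i}{8}x^8+\frac{j}{8}y^8 + \frac{8-i-j}{8}\cdot 1$. This is at most $\max(1,x^8,y^8)\le 1+x^8+y^8$.

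With this bound, the estimate for $f$ reduces to showing
\[
2\cdot 20^{-10}\sum_{\text{even terms}}|\text{coeff}| < 7.
\]
The plan is to add up the absolute values of the even-degree coefficients of $u$. The constant term $\approx 5.22\cdot10^{12}$ and the terms $x^2y^2\approx 5.22\cdot 10^{12}$, $x^2\approx 3.34\cdot10^{12}$, $y^2\approx3.34\cdot10^{12}$, $xy\approx5.80\cdot10^{12}$ and the quartic terms together give roughly $3.3\cdot 10^{13}$. Since $20^{10}=1.024\cdot 10^{13}$, the left side is about $2\cdot 3.3/1.024\approx 6.4<7$. I will tabulate this sum exactly, exploiting the near-symmetry $x\leftrightarrow y$ to halve the work, and confirm that it is below $7\cdot 20^{10}/2=3.584\cdot10^{13}$.

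For the derivatives, differentiating $a_{ij}x^iy^j$ in $x$ gives $i\,a_{ij}x^{i-1}y^j$, a monomial of degree at most $7$. The same monomial bound therefore applies, and each coefficient gains a factor $i\le 8$. Consequently
\[
\Big|\frac{\partial f}{\partial x}\Big| \le 2\cdot 20^{-10}\sum i\,|a_{ij}|\,(2+x^8+y^8) \le 8\cdot 2\cdot 20^{-10}\sum|a_{ij}|\,(2+x^8+y^8) < 56\,(2+x^8+y^8).
\]
This works provided the coefficient sum is as in the first part; in fact the bound $8\cdot 7=56$ follows directly from the first estimate if that estimate was obtained as $2\cdot20^{-10}\sum|a_{ij}|<7$ rather than through some cancellation. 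The argument for $\partial_y f$ is symmetric.

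The only real obstacle is the careful arithmetic in summing the coefficients. If the crude constant exceeds $7$, I would refine the estimate in two ways. First, I would use $|x^iy^j|\le 1+x^8+y^8$, which is stronger than the stated $2+x^8+y^8$. Second, I would bound low-degree terms such as $|xy|\le \tfrac12(x^2+y^2)$ before applying the monomial bound, gaining slack where needed.
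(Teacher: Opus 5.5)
Your proposal is correct and is the paper's argument. Bound every monomial (all of total degree $\le 8$) by $2+x^8+y^8$, check that the absolute coefficient sum scaled by $2\cdot 20^{-10}$ is below $7$, and gain a factor of at most $8$ for the derivatives. The paper gets $1631426159869/256000000000\approx 6.37$ using all coefficients of $u$ together with $f$ being a convex combination of $u(x,y)$ and $u(-x,-y)$; your even-part sum gives about $6.35$.

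Your weighted AM--GM bound $|x|^i|y|^j\le 1+x^8+y^8$ for $i+j\le 8$ is actually a cleaner justification than the paper's chain $|x^iy^j|\le x^{2i}+y^{2j}\le 2+x^8+y^8$. As written, that chain does not literally cover the pure powers $x^i$, $y^i$ with $5\le i\le 8$.
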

\begin{proof}  Let 
\[
S := \{(i,j) : i, j \leq 4\} \cup \{(i, 0): i \leq 8\} \cup \{(0, i): i \leq 8\}.
\]
Then for any pair $(i, j) \in S$ we have
\[ 
|x^{i} y^{j}| \leq x^{2i} + y^{2j} \leq (2 + x^{8} + y^{8}),
\]
so that for any polynomial of the form
$w(x,y) = \sum_{(i,j) \in S} a_{i j} x^i y^j$ we have
  \[
  |w(x,y)| \leq C(w) (2 + x^8 + y^8) \ , \ C(w) := \sum_{(i,j) \in S} |a_{i j}|
  \]
  By the same argument,
  \[
  \Big | \frac{\partial w}{\partial x} (x, y)\Big | \leq 8 C(w) (2 + x^8 + y^8) \ , \ \Big | \frac{\partial w}{\partial y}(x, y) \Big | \leq 8 C(w) (2 + x^8 + y^8).
  \]
  We compute
  \[
  C(u) = \frac{1631426159869}{256000000000} < 7
  \]
  and since  $f$ is a convex combination of $u$, this completes the proof. 
\end{proof}

  We now show that close to the diagonal $f(x,y)$ is strictly greater than one.
  \begin{lemma}
    For $|x - y| \leq 10^{-15}$ we have 
    $
    f(x,y) > 1. 
    $
  \end{lemma}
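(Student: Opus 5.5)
Since $f(x,y)=20^{-10}(u(x,y)+u(-x,-y))$ is even under $(x,y)\mapsto(-x,-y)$, I will reduce to the diagonal and then perturb off it, using the derivative bounds of the preceding lemma. First, I will study $g(x):=f(x,x)$, an explicit even polynomial of degree $8$ in $x$. Collecting coefficients of $u(x,x)+u(-x,-x)$ keeps only the even-degree terms, so $g(x)=\sum_{k=0}^{4} b_{2k}x^{2k}$ with explicit rational $b_{2k}$; the numerics ($\min g\approx 1.0001$ at $x\approx 0.5025$) suggest proving $g(x)\ge 1+10^{-5}$ for all real $x$. Writing $t=x^2\ge 0$, this becomes the claim that the quartic $q(t)=\sum b_{2k}t^k-1-10^{-5}$ is nonnegative on $[0,\infty)$. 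I will certify this by an explicit decomposition $q(t)=\sigma_0(t)+t\,\sigma_1(t)$ with $\sigma_0,\sigma_1$ sums of squares, in the same spirit as the nonnegativity lemma. Alternatively, I can bound $q$ below on a compact interval $[0,T]$ by a fine grid combined with a Lipschitz estimate, and use that the leading coefficient dominates for $t>T$. The Lean formalization would do either check with exact rational arithmetic.

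Second, for $|x-y|\le 10^{-15}$ I will pass from $(x,x)$ to $(x,y)$ by the mean value theorem:
\[
|f(x,y)-f(x,x)|\le |y-x|\sup_{z\in[x,y]}\Big|\frac{\partial f}{\partial y}(x,z)\Big|\le 56\cdot 10^{-15}\,(2+x^8+\max(|x|,|y|)^8).
\]
This error is relative to a degree-$8$ quantity, so I must compare it with the growth of $g$. The coefficient of $x^8$ in $g$ is $2\cdot 20^{-10}(41813434533+359512561893+41790603610)$, which is roughly $8.7\cdot 10^{-2}\cdot 20^{-0}$ after scaling; in particular it is a fixed positive constant. Hence $g(x)\ge 1+10^{-5}+\kappa\, x^8$ for some explicit $\kappa>0$ once $|x|\ge R$ for an explicit $R$, and for $|x|\le R+1$ the error term above is at most $56\cdot 10^{-15}(2+(R+1)^8)$, which is far below $10^{-5}$ when $R$ is moderate, say $R\le 10$. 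For large $|x|$, I will use the slack $\kappa x^8 \gg 56\cdot 10^{-15}\cdot 2^8(1+x^8)$, noting that $|y|\le |x|+1$.

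Combining both regimes gives $f(x,y)>1$ throughout the strip. The main obstacle is the first step: producing a rigorous, human-checkable certificate that $g\ge 1+10^{-5}$. The margin is only about $10^{-4}$ at $x\approx0.5$, so any sum-of-squares decomposition must have near-exact rational coefficients. I would first try to find such a decomposition numerically and then round it to rationals, absorbing the residual with the $10^{-5}$ slack. If that fails, I would fall back on interval arithmetic on $[0,R]$.
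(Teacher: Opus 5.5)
Your plan is correct and follows the paper's argument. The paper certifies the diagonal bound with the explicit identity $f(x,x)=1+10^{-4}+\sum_{i=1}^{3}q_i(x)^2$ and reads off $f(x,x)\ge\max(1+10^{-5},10^{-6}x^8)$, taking the $x^8$ growth from $q_3^2$ rather than from the leading coefficient; it then perturbs off the diagonal using the bound $56(2+x^8+y^8)$ on the partial derivatives and splits at $|x|=6$. The one slip is your side remark that any $R\le 10$ is harmless: at $R=10$ your own error bound is about $1.2\cdot 10^{-5}$, and the term $x^8+\max(|x|,|y|)^8$ should contribute $2(R+1)^8$, not $(R+1)^8$. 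This does not break anything, because with e.g.\ $\kappa=0.04$ the leading-coefficient argument already works for $R=\sqrt{7}$, where the error is below $10^{-8}$.
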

  \begin{proof} Let $\Delta(x) = f(x, x)$. We find
  $$\Delta(x, x) = 1 + \frac{1}{10000} + \sum_{i=1}^3 q_i(x)^2$$
  where
  \begin{displaymath}
  \begin{split}
 &  q_1(x) = \frac{815955355589474040 - 3670934166676790610 x^2 + 
 1740619888194368521 x^4}{429086076000000 \sqrt{190161229}},\\
 & q_2(x) = \frac{1}{200000} \sqrt{\frac{168706828659586711271257}{203988838897368510}} x^2\\
 &\quad\quad\quad- \frac{
 883362315013506570851197  }{
 1600000 \sqrt{34414310092326386253543842538654099917070}} x^4,\\
 & q_3(x) = \frac{1}{858172152000000} \sqrt{\frac{324146632648955034540879425665616037031045777981}{
337413657319173422542514}} x^4,
  \end{split}
  \end{displaymath}
  so that 
  \[
  \Delta(x, x) \geq \max(1 + 10^{-5}, q_3(x)^2) \geq \max(1 + 10^{-5}, 10^{-6}x^8). 
  \]
   By the bound for the partial derivatives of $f$ we have for $|x - y| \leq 10^{-15}$ that
    \[
    f(x,y) \geq \Delta(x) - 10^{-15} \cdot 56 \cdot (2 + x^8 + y^8).
    \]
    If $|x| \leq 1$ and $|y| \leq 1$, then $x^8 + y^8 \leq 2$.
    If either $|x| \geq 1$ or $|y| \geq 1$ then since $|x - y| \leq 10^{-8}$ we have $|\max(x, y)| < (1 + 2 \cdot 10^{-8}) |x|$,
    hence $x^8 + y^8 \leq 2 x^8$. In both cases $x^8 + y^8 \leq 2 + 2 x^8$ for $|x - y| \leq 10^{-8}$.
    Therefore
    \[
    f(x,y) \geq \Delta(x) - 10^{-15} \cdot 56 \cdot (4 + 2 x^8).
    \]
whenever $|x - y| \leq 10^{-15}$. If $|x| \leq 6$, then 
\[
\Delta(x) - 10^{-15} \cdot 56 \cdot (4 + 2 x^8) \geq  1 + 10^{-5} - 10^{-15} \cdot 56 \cdot (4 + 2 \cdot 6^8) = \frac{31250306621337}{31250000000000} > 1,
\]
and if $|x| > 6$, we have 
\[
\Delta(x) - 10^{-15} \cdot 56 \cdot (4 + 2 x^8) \geq  10^{-6} \cdot 6^8 - 10^{-15} \cdot 56 \cdot (4 + 2 \cdot 6^8) = \frac{52487994121337}{31250000000000} > 1.
\]
This completes the proof. 
   \end{proof}

\subsection{Proof of Theorem \ref{thm15}} 

Part \eqref{Hecke-part1} uses Lemma \ref{lem:poly}, whereas Part \eqref{Hecke-part2} uses Lemma \ref{polyh}. We prove only the former, as the latter follows from a similar argument.

 Let $X_0 > 0$ be such that $\pi_1, \pi_2$ are unramified at all primes $p \geq X_0$. For $p \geq X_0$ let $\alpha_1(p), \beta_1(p)$ denote the Satake parameters of $\pi_1$ and $\alpha_2(p), \beta_2(p)$ the Satake parameters of $\pi_2$. Let $f\in\R[x,y]$ be as in Lemma \ref{lem:poly}. Then by Properties \eqref{pos} and \eqref{diag} there is $\varepsilon>0$ such that 
\[
\sum_{\substack{X_0 \leq p \leq X \\ |\lambda_{\pi_1}(p) - \lambda_{\pi_2}(p)| \leq \varepsilon} }  \log p  \leq \sum_{k = 1}^{K-1} \sum_{X_0 \leq p^k \leq X }(\log p) f\left(\alpha_1(p)^k  + \beta_1(p)^k, \alpha_2(p)^k  + \beta_2(p)^k\right)
\]
for any $K \geq 2$. 

For a representation $\Pi$ let $\Lambda_{\Pi}$ denote the Dirichlet coefficients of $(L'/L)(s, \Pi)$. Using the Kim--Sarnak bound $|\alpha_j(p)|, |\beta_j(p)| \leq  p^{7/64}$, we have
\[
\frac{L'(s, \Pi)}{L(s, \Pi)}=\sum_{i, j} a_{i, j}\sum_{X_0 \leq n \leq X }  \Lambda_{\pi_1^{\otimes i} \otimes \pi_2^{\otimes j}}(n) + O\left(X^{1/K} (X^{1/K})^{ \frac{7}{64} \cdot 8 K}\right).
\]
For $\pi \in \{\pi_1, \pi_2\}$ we can decompose
\[
\pi^{\otimes i}  = \bigboxplus_{0 \leq j \leq i/2} \Big(\binom{i}{j} - \binom{i}{j+1}\Big)   \text{sym}^{i - 2j}\pi,
\]
with the understanding $\text{sym}^0\pi = \textbf{1}$. By our assumption that the symmetric powers are cuspidal and distinct, we know by Rankin--Selberg theory  that $L(s, \text{sym}^i\pi_1 \otimes \text{sym}^j\pi_2)$ is holomorphic and non-vanishing on $\Re s \geq 1$ for $1 \leq i, j \leq 4$. By \cite{KS2} we also know that $L(s, \text{sym}^i\pi)$ is holomorphic and nonvanishing for $\Re s\geq 1$ for $1 \leq i \leq 8$. By the prime number theorem for the relevant $L$-functions, together with Property \eqref{small-degree} of Lemma \ref{lem:poly}, we have
$$\frac{L'(s, \Pi)}{L(s, \Pi)}=\left(\sum_{i, j} a_{2i, 2j} C(i) C(j)\right)(1 + o(1)) X + O(X^{9/10})$$
for $K > 40$. We conclude by Property \eqref{half} of Lemma \ref{lem:poly}.

\section{Siegel zeros and small split primes}\label{sec:zeros}

In this section we prove Theorem \ref{thm14}. The bulk of the work is contained in Theorem \ref{zero-theorem} below --- an important technical ingredient of independent interest --- which gives minimal conditions on the location of zeros under which short character sums over primes show some cancellation.

\subsection{Reduction to smooth character sums over primes} To motivate the statement of Theorem \ref{zero-theorem}, and to prepare for its proof, we first observe that
\[
\sum_{\substack{X \leq p \leq X^2 \\\chi_{-D}(p) = 1}} \frac{1}{p} = \frac{1}{2}  \sum_{ X \leq p \leq X^2} \frac{1}{p} + \frac{1}{2} \sum_{ X \leq p \leq X^2 } \frac{\chi_{-D}(p)}{p}.
\]
It therefore suffices to establish bounds on short character sums over primes, under the stated conditions of Theorem \ref{thm14}. To allow for complex analytic techniques we convert to a smooth summation. Fix some small $\varepsilon > 0$ and consider a smooth test function $W$ that is $1$ on $[1+\varepsilon, 2 - \varepsilon]$ and 0 outside $[1, 2]$, so that
\[
\sum_{ X \leq p \leq X^2 } \frac{\chi_{-D}(p)}{p} = \sum_{ X \leq p \leq X^2} \frac{\chi_{-D}(p)}{p} W\Big(\frac{\log p}{\log X}\Big) + O(\varepsilon).
\]
Let $\psi$ be given by the hypotheses of Theorem \ref{thm14}. If we can show that
\begin{equation}\label{eq:show}
 \sum_{ X \leq p \leq X^2} \frac{\chi_{-D}(p)}{p} W\Big(\frac{\log p}{\log X}\Big)=o_{\varepsilon}(1) \quad  \textrm{ for } \quad X \geq D^{1/\psi(D)}, 
 \end{equation}
 then
for every $0<\delta<1/2$ we can make $o_{\varepsilon}(1)+O(\varepsilon)\leq 1/2-\delta$ to obtain
\[
\sum_{\substack{X \leq p \leq X^2 \\ \chi_{-D}(p) = 1}} \frac{1}{p} \geq \delta\sum_{ X \leq p \leq X^2} \frac{1}{p}\]
for $X$ sufficiently large in terms of $\delta$, as required. 

Assume that $L(s, \chi_{-D})$ has no zeros for $\sigma > 1 - \Delta/\log (D(1+|t|))$, say. To establish cancellation in smooth character sums over primes of size $X$ under this hypothesis, there is a basic tension between the two parameters $X$ and $\Delta$: the smaller we take $X$ (relative to $D$) the larger we must take $\Delta$. Recall that the ratio $\log X/\log D$ can be as small as $1/\psi(D)$, which we wish to decay arbitrary slowly. The quantity $\Delta$ ultimately appears in the hypothesis \eqref{zero-freePsi} as a function of $\psi(D)$. (One can also be more precise with the range of the $t$ variable, as in \eqref{zero-freePsi} and Theorem \ref{zero-theorem} below, to make the analysis completely finitary.) 

Standard analytic techniques can set up a weak relationship between $\log X/\log D$ and $\Delta$. Indeed, by a simple application of Perron's formula, we obtain an expression of the form
\[
\int_{(2)} \frac{L'}{L}(s, \chi_{-D}) X^s \hat{W}(s)\frac{ds}{2\pi i},
\]
where the Mellin transform $\hat{W}$ decays rapidly on vertical strips. A contour shift produces a saving of about $X^{-\Delta/\log D} = \exp( - \Delta \log X/\log D)$, suggesting that $\log X$ can shrink relative to $\log D$ as soon as $\Delta \rightarrow \infty$. However, we also need to take into account the size of $\frac{L'}{L}(s, \chi_{-D})$ for which only bounds of size $O(\log D)$ are available, without making more assumptions. To compensate this, $\Delta$ must grow at least as quickly as $\log\log D$. Alternatively, we can shift the contour beyond the zero-free region, but then we need a bound for the number of zeros that we pass, and this again incurs a factor of size $O(\log D)$. In either case, the required zero-free region is of the form $\sigma\geq 1-C\log\log D/\log D$, which is of the same quality as that described in \cite[Section 1]{MV} to remove the congruence condition in Linnik's theorem.

By a much finer analysis of the interplay of zero-free regions and the size of $L$-functions near the 1-line, we bypass these difficulties. The following  theorem allows $\log X = o(\log q)$ whenever $\Delta \rightarrow \infty$ without any speed restrictions, and this is the best we can hope for.  See \cite[Chapter 9]{Mo} for a precursor. 

\begin{theorem}\label{zero-theorem} Let $q \in \N$ and $\chi$ a primitive Dirichlet character modulo $q$. 
  Let $W$ be a smooth function, compactly supported in $[1,2]$ with $|W| \leq 1$. 
    Let $0 < \eta \leq \tfrac 12$, and set $X := q^{\eta}$. Let $1 \leq \Delta = o(\log X)$, $1 \leq T \leq \log X$.  
  Suppose that $L(s, \chi)$ has no zeros in the region
  \[
  |t| \leq \frac{2 T}{\log X} \ , \ \sigma > 1 - \frac{2 \Delta}{\log X}. 
  \]
    Then for every $K$ there exists a constants $c_K > 0$ depending only on $K$ such that 
  \[
\sum_{p} \frac{\chi(p)}{p} W \Big ( \frac{\log p}{\log X} \Big ) \ll \Big ( \frac{\Delta}{\eta} \Big )^3 \cdot \big (  e^{-\Delta} + c_K \Delta T^{-K} \big ).
  \]
\end{theorem}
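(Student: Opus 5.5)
We would begin by passing from primes to prime powers weighted by the von Mangoldt function. Write $L=\log X$ and set $F(u)=W(u)/u$, so that $\frac{1}{p}W(\frac{\log p}{L})=\frac{\log p}{pL}F(\frac{\log p}{L})$. Prime powers $p^k$ with $k\ge2$ contribute $O(X^{-1/2})$, since $W$ forces $p^k\ge X$. So it suffices to bound
\[
S:=\frac1L\sum_n\frac{\Lambda(n)\chi(n)}{n}F\Big(\frac{\log n}{L}\Big).
\]
Next, define $G(s)=\int F(u)e^{su}\,{\rm d}u$. This is entire, and by repeated partial integration $|G(\sigma+it)|\ll_K e^{2\max(\sigma,0)}(1+|t|)^{-K}$. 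Fourier inversion then gives
\[
S=-\frac{1}{2\pi i}\int_{(c)}\frac{L'}{L}\Big(1+\frac{w}{L},\chi\Big)\,G(-w)\,\frac{{\rm d}w}{L}
\]
in the rescaled variable $s=1+w/L$. The implied constants depend on $W$, but they can be made uniform once we use $|W|\le1$ together with derivative bounds.

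Rather than shifting the contour past zeros, we would use the explicit partial-fraction structure of $L'/L$ near $s=1$. The plan is to apply the Borel--Carathéodory/Landau lemma to $\log L(s,\chi)$ on discs centred at $s_0=1+\Delta/L+it_0$, with $|t_0|\le T/L$ and radius of order $\Delta/L$. The hypothesised zero-free region is exactly $|t|\le 2T/L$, $\sigma>1-2\Delta/L$ in these coordinates, so such a disc contains no zeros. On it, $\log L(s,\chi)$ is then analytic. We control its real part from above with the crude bound $\log|L(s,\chi)|\le \log\log q+O(1)\le \log(L/\eta)+O(1)$, and at the centre by $|\log L(s_0,\chi)|\le\log(L/\Delta)+O(1)$. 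Borel--Carathéodory then bounds $\frac{L'}{L}(s,\chi)$ on a smaller concentric disc by roughly $\frac{L}{\Delta}\log\frac{L}{\eta\Delta}$. In the rescaled variable this is $\frac{1}{L}\frac{L'}{L}\ll \Delta^{-1}\log(1/\eta)$ per unit. The factor $(\Delta/\eta)^3$ in the statement suggests we can afford polynomial losses here, so these estimates need not be sharp.

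With this bound in hand, we move the $w$-contour to $\Re w=-\Delta$ for $|\Im w|\le T$. On that segment $|G(-w)|\ll e^{-\Delta}$, because $F$ is supported in $[1,2]$ and so $|e^{-wu}|\le e^{-\Delta u}\le e^{-\Delta}$. Combined with the Borel--Carathéodory bound, this gives the term $(\Delta/\eta)^3e^{-\Delta}$. The horizontal connectors at $|\Im w|=T$ and the tails $|\Im w|>T$ are kept at $\Re w\in[-\Delta,c]$ or on the original line $\Re w=c$. There we use the decay $(1+|t|)^{-K}$ of $G$, together with the convexity-type bound $\frac{1}{L}\frac{L'}{L}(1+w/L)\ll \frac{1}{\Re w}$ valid for $\Re w>0$ (by comparison with $\zeta'/\zeta$), which produces $c_K\Delta T^{-K}$.

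The main obstacle is making the Borel--Carathéodory step genuinely uniform. We need a lower bound for $|L(s_0,\chi)|$ at the centre that involves no Siegel-type ineffectivity. The plan is to get it from $\sigma_0-1=\Delta/L$ via the Euler product, $|L(s_0,\chi)|\ge\zeta(\sigma_0)^{-1}\gg\Delta/L$, and to choose the radii (centre at $1+\Delta/L$, radius $\frac{3}{2}\Delta/L$, inner radius $\Delta/L$) so that the inner disc reaches $\Re s=1-\Delta/L$. If the resulting logarithmic factor $\log(L/\eta)$ turns out too large, we would instead iterate over a chain of overlapping discs along the segment $|t|\le T/L$; each disc costs a factor of order $\Delta/\eta$, which is presumably the source of the cube.
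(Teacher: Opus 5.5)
There is a genuine gap in your central step, the Borel--Carath\'eodory bound for $L'/L$. Let $M\ge\log\log q$ be your upper bound for $\Re\log L(s,\chi)$ on the outer disc, and note $|\log L(s_0,\chi)|\le\log(\log X/\Delta)+O(1)$ at the centre. The lemma then gives on the inner disc only
\[
\frac{L'}{L}(s,\chi)\ll\frac{\log X}{\Delta}\Big(M+\log\frac{\log X}{\Delta}\Big).
\]
After rescaling this is $\frac{1}{\log X}\frac{L'}{L}\ll\Delta^{-1}\log\log X$, not $\Delta^{-1}\log(1/\eta)$: you dropped the $\log\log X$ hidden in $\log\frac{\log X}{\eta\Delta}$. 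The segment $\Re w=-\Delta$ then contributes about $e^{-\Delta}\Delta^{-1}\log\log X$. This is unbounded as $q\to\infty$ with $\Delta,\eta,T$ fixed, and it cannot be absorbed into $(\Delta/\eta)^3$. To beat even the trivial bound you would need $\Delta$ to grow like $\log\log\log q$, whereas the point of the theorem is that $\Delta$ may grow arbitrarily slowly.

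Chaining discs does not help, because the loss sits in the size bound $M$, not in the geometry. Removing it would require $\log|L(s,\chi)/L(s_0,\chi)|=O_{\Delta,\eta}(1)$ on each disc, which is essentially what is to be proved. Two smaller points. Discs of radius $\asymp\Delta/\log X$ centred at height $|t_0|\le T/\log X$ leave the hypothesised box $|t|\le 2T/\log X$ unless $T\gg\Delta$, which is not assumed. Also, your Laplace inversion should involve $G(w)$, not $G(-w)$, to get the decay $e^{-\Delta}$ on $\Re w=-\Delta$.

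The missing idea is to control the second derivative of $\log L$ rather than the size of $L$. By Cauchy's formula and \eqref{dav}, $(L'/L)'(s)=-\sum_{|\gamma-t|\le1}(s-\rho)^{-2}+O(\mathcal{L}^2)$. This zero sum, unlike $\sum|s-\rho|^{-1}$, converges without logarithmic loss. Suppose there are no zeros in the thin box $|\Im s-t|\le\mathcal{L}^{-1}$, $\Re s>\sigma_0$. Then every zero is at distance $\ge\mathcal{L}^{-1}$ from $\sigma+it$ for $\sigma\ge\sigma_0+\mathcal{L}^{-1}$. The count $n_\chi(r;t)\ll 1+r\mathcal{L}$, summed dyadically, then gives $(L'/L)'(\sigma+it)\ll(1+|\sigma-1|\mathcal{L})\mathcal{L}^2$ (Lemma \ref{le:bound}).

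Now integrate from the anchor $1+\mathcal{L}^{-1}+it$, where $|L'/L|\ll\mathcal{L}$ directly from the Dirichlet series. This gives $L'/L\ll A^2\mathcal{L}$ for $\sigma\ge 1-A\mathcal{L}^{-1}$. One more integration gives $\log L(\sigma+it)-\log L(\sigma+A\mathcal{L}^{-1}+it)\ll A^3$ with $A\asymp\Delta/\eta$ (Lemma \ref{le:main}); this double integration, not a chain of discs, is the source of the cube.

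The paper then subtracts the prime sum weighted by $p^{-1-(1+\Delta)/\log X}$, which is trivially $O(e^{-\Delta})$, so that only this difference of logarithms appears. It then shifts contours essentially as you propose. The boxes have height only $\mathcal{L}^{-1}\le 1/\log X$, so $T\ge 1$ suffices. With this input your von Mangoldt formulation would also go through, since $\frac{1}{\log X}\frac{L'}{L}\ll A^2\mathcal{L}/\log X\ll\Delta^2/\eta^3$ on the shifted contour.
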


Let us explain the meaning of this bound in more detail. The left-hand side is supported on $X \leq p \leq X^2$, so the trivial bound for the left-hand side is $O(1)$. Hence for example the choice $\Delta = 10 \log(1 / \eta)$ and $T = 1 / \eta$ is enough to obtain non-trivial results. We think of $\eta$ as going to zero arbitrarily slowly in terms of $q$, and then $T, \Delta$ can tend to infinity arbitrarily slowly. In other words, this result gives a precise dictionary between quantitative improvements of ``no Siegel zeros'' and cancellation in character sums over primes below polynomial length in the conductor. In particular, Theorem \ref{zero-theorem} yields \eqref{eq:show} and hence Theorem \ref{thm14} provided that \eqref{zero-freePsi} holds. 

The remainder of this section is therefore dedicated to the proof of Theorem \ref{zero-theorem}. 

\subsection{Preparatory estimates}
Let $\chi$ be a primitive character modulo $q$. We generally write $\tau = |t|+2$ and $s = \sigma + it$. We also write
\[
\mathcal{L} := \log q\tau.
\]

 We start with two well-known estimates. Uniformly in $-1 \leq \sigma \leq 2$ and $r \geq 1$ we have 
\begin{equation}\label{dav}
  \frac{L'}{L}(s, \chi) = \sum_{\substack{\rho := \beta + i \gamma \\ |t - \gamma| \leq r}} \frac{1}{s - \rho} + O(\mathcal{L})
  \end{equation}
For $r \geq 0$, $t\in \R$ let $n_\chi(r; t)$ be the number of zeros of $L(s, \chi)$ in the disk $\{s \in \mathbb{C}: |1 + it - s| \leq r\}$. Then
\begin{equation}\label{fogels}
n_\chi(r; t)  \ll 1 + r \mathcal{L}.  
\end{equation}
The latter can be found, for instance, in \cite[Lemma 7]{Fogels}, but for convenience we give the quick proof. We 
notice that
 \[
  \Re \sum_{\substack{\rho := \beta + i \gamma \\ |t - \gamma| \leq 1}} \frac{1}{1 + r + it - \rho} = \sum_{\substack{\rho := \beta + i \gamma \\ |t - \gamma| \leq 1}} \frac{1 + r - \beta}{|1 + r + it - \rho|^2}.
 \]
 For any zero $\rho = \beta + i \gamma$ with $|1 + it - \rho| \leq r$ we have $|1 + r + it - \rho| \leq 2r$, and also (trivially, since $\beta \leq 1$) $1 + r - \beta \geq r$. Therefore, for any zero $\rho$ with $|1 + it - \rho| \leq r$ we obtain
  \[
  \frac{1}{4 r} = \frac{r}{(2 r)^2} \leq \frac{1 + r - \beta}{|1 + r + it - \rho|^2}.
  \]
  It follows that
\begin{displaymath}
\begin{split}
  \frac{n_\chi(r; t)}{4 r}& \leq \Re \sum_{\substack{\rho = \beta + i \gamma \\ |t - \gamma| \leq \max(1, r)}} \frac{1}{1 + r + it - \rho}
   \ll \Big | \Re \frac{L'}{L}(1 + r + it) \Big | + \mathcal{L}\ll \frac{\zeta'}{\zeta}(1+r) +  \mathcal{L} \ll \frac{1}{r} + \mathcal{L},
   \end{split}
   \end{displaymath}
  as desired. 
  
 Our first technical lemma is the following.  
  \begin{lemma} \label{le:bound}
 For $\sigma_0 > \tfrac 12$ and $t \in \mathbb{R}$  and define 
  \[
  \mathcal{C}_q(\sigma_0, t) := \Big \{ s \in \mathbb{C} : | \Im s - t| \leq \mathcal{L}^{-1}\ , \ \Re s > \sigma_0 \Big \}
  \]
If $L(s, \chi)$ has no zeros in $\mathcal{C}_q(\sigma_0, t)$, then for any $\sigma \geq \sigma_0 + \mathcal{L}^{-1}$ we have 
  \[
  \Big ( \frac{L'}{L} \Big )'(\sigma + it) \ll (1 + |\sigma - 1|\mathcal{L}) \cdot \mathcal{L}^2.
  \]
\end{lemma}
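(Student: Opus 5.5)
Differentiate the partial-fraction formula \eqref{dav}. Taking $r=1$, for $s$ with $-1\le\sigma\le 2$ one has $\frac{L'}{L}(s,\chi)=\sum_{|t-\gamma|\le 1}\frac{1}{s-\rho}+O(\mathcal{L})$. The $O(\mathcal{L})$ term is not a priori holomorphic with controlled derivative, so I will work with the Hadamard product form directly. Differentiating it gives
\[
\Big(\frac{L'}{L}\Big)'(s,\chi)=-\sum_{\rho}\frac{1}{(s-\rho)^2}+O(1)
\]
for $\Re s\ge 1/2$ (the Gamma-factor contributes $O(1)$ in this range). Alternatively, apply Cauchy's estimate on a disk of radius $\asymp\mathcal{L}^{-1}$ about $\sigma+it$, which lies in the zero-free region $\mathcal{C}_q(\sigma_0,t)$ because $\sigma\ge\sigma_0+\mathcal{L}^{-1}$; this reduces the lemma to bounding $L'/L$ itself on that disk by $(1+|\sigma-1|\mathcal{L})\mathcal{L}$. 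I will pursue the first route, since the sum over zeros is absolutely convergent and easy to dyadically decompose.

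Fix $s=\sigma+it$ with $\sigma\ge\sigma_0+\mathcal{L}^{-1}$, and assume $\sigma\le 2$, since for $\sigma>2$ the bound is trivial from the Dirichlet series. Split the zeros by $|s-\rho|$. Since there are no zeros in $\mathcal{C}_q(\sigma_0,t)$, every zero with $|\gamma-t|\le\mathcal{L}^{-1}$ has $\beta\le\sigma_0$, hence $|s-\rho|\ge\sigma-\sigma_0\ge\mathcal{L}^{-1}$. Every zero with $|\gamma-t|>\mathcal{L}^{-1}$ also has $|s-\rho|>\mathcal{L}^{-1}$. So all zeros satisfy $|s-\rho|\ge\mathcal{L}^{-1}$.

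Now decompose dyadically: $2^k\mathcal{L}^{-1}\le|s-\rho|<2^{k+1}\mathcal{L}^{-1}$. A zero in the $k$-th shell lies in the disk $|1+it-w|\le|\sigma-1|+2^{k+1}\mathcal{L}^{-1}$, so by \eqref{fogels} the shell contains
\[
\ll 1+(|\sigma-1|+2^{k+1}\mathcal{L}^{-1})\mathcal{L}=1+|\sigma-1|\mathcal{L}+2^{k+1}
\]
zeros. Each such zero contributes at most $\mathcal{L}^2 4^{-k}$. Summing over $k\ge0$ gives
\[
\sum_k\big(1+|\sigma-1|\mathcal{L}+2^{k+1}\big)4^{-k}\mathcal{L}^2\ll(1+|\sigma-1|\mathcal{L})\mathcal{L}^2 .
\]
The far zeros, with $|t-\gamma|\ge 1$, contribute $\sum_{|\gamma-t|\ge1}|\gamma-t|^{-2}\ll\mathcal{L}$ by the standard zero count $\ll\log(q(|\gamma|+2))$ per unit interval. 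This gives the claim.

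The main obstacle is justifying the differentiated formula with an $O(\mathcal{L})$ or better remainder, uniformly for $\sigma$ near $\sigma_0>1/2$. I would take it from the Hadamard factorization $\frac{L'}{L}(s,\chi)=B(\chi)+\sum_\rho\big(\frac1{s-\rho}+\frac1\rho\big)-\frac12\frac{\Gamma'}{\Gamma}\big(\frac{s+\mathfrak a}{2}\big)-\frac12\log\frac q\pi$. Differentiating this kills the constants, and the digamma derivative is $O(1)$ for $\Re s\ge1/2$. The sum $\sum_\rho(s-\rho)^{-2}$ converges absolutely, and the shell count above controls it uniformly in $t$.
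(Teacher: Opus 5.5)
Your proof is correct and follows the paper's argument: the absence of zeros in $\mathcal{C}_q(\sigma_0,t)$ forces $|\sigma+it-\rho|\ge\mathcal{L}^{-1}$ for every zero, and the dyadic shell count via \eqref{fogels} then gives $(1+|\sigma-1|\mathcal{L})\mathcal{L}^2$. The only difference is how you obtain $-\sum_\rho(\sigma+it-\rho)^{-2}$ plus a small remainder. The paper applies Cauchy's integral formula on the circle $|z-(\sigma+it)|=\mathcal{L}^{-1}$ to \eqref{dav}, which handles the non-holomorphic $O(\mathcal{L})$ term you were worried about at a cost of $O(\mathcal{L}^2)$. You instead differentiate the Hadamard expansion and bound the zeros with $|\gamma-t|\ge 1$ separately by $O(\mathcal{L})$. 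This works equally well, provided you restrict the dyadic shells to zeros with $|\gamma-t|\le 1$, so that \eqref{fogels} is only used with bounded radius.
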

\begin{proof}
 By Cauchy's integral formula we have
  \[
  \Big ( \frac{L'}{L} \Big )' (\sigma + it) = \frac{1}{2\pi i}\oint_{|z - (\sigma + it)| = \mathcal{L}^{-1}} \frac{L'}{L}(z, \chi) \cdot \frac{dz}{(z - (\sigma + it))^2}.
  \]
  Inserting \eqref{dav}, we obtain
  \begin{equation} \label{eq:bound}
  \Big ( \frac{L'}{L} \Big )'(\sigma + it) = - \sum_{\substack{\rho = \beta + i \gamma \\ |t - \gamma| \leq 1}} \frac{1}{(\sigma+ i t - \rho)^2} + O(\mathcal{L}^2).
  \end{equation}
 Splitting into dyadic ranges, this is 
  \begin{align*} 
  \ll   \sum_{\substack{\rho = \beta + i \gamma \\ |t - \gamma| \leq 1 \\ |\sigma + it - \rho| \leq \mathcal{L}^{-1}}} \frac{1}{|\sigma + i t - \rho|^2} 
   + \sum_{k \geq 0} \sum_{\substack{\rho = \beta + i \gamma \\ |t - \gamma| \leq 1 \\ |\sigma + it - \rho| \mathcal{L} \in [2^k, 2^{k + 1}]}} \frac{1}{|\sigma + i t - \rho|^2} +\mathcal{L}^2.
  \end{align*}
  By assumption we have for $\sigma \geq \sigma_0 + \mathcal{L}^{-1}$ that 
  $
  |\sigma + it - \rho| \geq  \mathcal{L}^{-1}
  $
  for all zeros $\rho$.
  The number of zeros in the region $|\sigma  + it - \rho| \leq 2^k\mathcal{L}^{-1}$ is majorized by the number of zeros in $|1 + it - \rho| \leq 2^k\mathcal{L}^{-1}+ |\sigma - 1|$. By \eqref{fogels}, the latter is $\ll 2^k + |\sigma  - 1| \mathcal{L}$.
  Hence \eqref{eq:bound} is bounded by
 \[
  \mathcal{L}^2  ( 1 + |\sigma  - 1| \mathcal{L})   +\mathcal{L}^2  \sum_{k \geq 0} (2^k + |\sigma - 1| \mathcal{L})  2^{-2k} + \mathcal{L}^2  \ll |\sigma - 1|\mathcal{L}^3 + \mathcal{L}^2.\qedhere 
\]
\end{proof}

\begin{lemma}\label{le:main}
  Let $t\in \R$,   $10 < A = o(\log \mathcal{L})$. Suppose that $L(s, \chi)$ has no zeros in the region $\mathcal{C}_q(1 - (A + 1) \mathcal{L}^{-1}, t)$.
  Then, uniformly in $1 - A \mathcal{L}^{-1} \leq \sigma$ we have
  \[
  \log L  (\sigma + it, \chi  ) - \log L (\sigma + A \mathcal{L}^{-1} + it, \chi ) \ll A^3.
  \]
\end{lemma}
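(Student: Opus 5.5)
We write the difference as an integral of the logarithmic derivative along the horizontal segment from $\sigma + A\mathcal{L}^{-1} + it$ to $\sigma + it$:
\[
\log L(\sigma+it,\chi) - \log L(\sigma+A\mathcal{L}^{-1}+it,\chi) = -\int_{\sigma}^{\sigma+A\mathcal{L}^{-1}} \frac{L'}{L}(u+it,\chi)\,{\rm d}u .
\]
This is legitimate because the segment lies in $\Re s \geq 1-A\mathcal{L}^{-1}$. That is at distance $\mathcal{L}^{-1}$ to the right of the zero-free box $\mathcal{C}_q(1-(A+1)\mathcal{L}^{-1},t)$, so $\log L$ has a holomorphic branch in a neighbourhood of the segment. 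The segment has length $A\mathcal{L}^{-1}$. It therefore suffices to show that $\frac{L'}{L}(u+it,\chi)\ll A^2\mathcal{L}$ uniformly for $u\geq 1-A\mathcal{L}^{-1}$ in the relevant range.

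We obtain that bound by integrating Lemma \ref{le:bound} in turn. Apply Lemma \ref{le:bound} with $\sigma_0 = 1-(A+1)\mathcal{L}^{-1}$. For every $u\geq \sigma_0+\mathcal{L}^{-1}=1-A\mathcal{L}^{-1}$ it gives
\[
\Big(\frac{L'}{L}\Big)'(u+it)\ll (1+|u-1|\mathcal{L})\mathcal{L}^2 .
\]
Now fix an anchor point $u_1 = 1 + A\mathcal{L}^{-1}$, or more simply $u_1=2$. At $u_1$ we have the trivial bound $\frac{L'}{L}(u_1+it)\ll \frac{\zeta'}{\zeta}(1+A\mathcal{L}^{-1})\ll \mathcal{L}/A$. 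Integrating the derivative bound from $u$ up to $u_1$ then gives, for $1-A\mathcal{L}^{-1}\leq u\leq 1+A\mathcal{L}^{-1}$,
\[
\frac{L'}{L}(u+it)\ll \frac{\mathcal{L}}{A} + \int_{u}^{1+A\mathcal{L}^{-1}}(1+|v-1|\mathcal{L})\mathcal{L}^2\,{\rm d}v \ll \mathcal{L}/A + A^2\mathcal{L},
\]
since the integrand is $\ll A\mathcal{L}^2$ over an interval of length $\ll A\mathcal{L}^{-1}$.

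If $\sigma\leq 1+A\mathcal{L}^{-1}$, plugging this bound into the first display bounds the difference by $A\mathcal{L}^{-1}\cdot A^2\mathcal{L}=A^3$. If $\sigma>1+A\mathcal{L}^{-1}$, the whole segment lies where $|\frac{L'}{L}|\leq \frac{\zeta'}{\zeta}(\sigma)\ll \mathcal{L}/A$, so the integral is $O(1)$.

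The step that needs care is the uniformity of the derivative bound. It has to hold at real parts as small as $1-A\mathcal{L}^{-1}$, which is exactly distance $\mathcal{L}^{-1}$ from the boundary of the zero-free box; this is why the hypothesis uses $A+1$ rather than $A$. The lower bound $|u+it-\rho|\geq\mathcal{L}^{-1}$ for the zeros is precisely what Lemma \ref{le:bound} needs. The assumption $A=o(\log\mathcal{L})$ guarantees $A\mathcal{L}^{-1}$ is small, so the segments stay within $-1\leq\sigma\leq 2$ where \eqref{dav} and \eqref{fogels} apply. One also checks that the counting of zeros in Lemma \ref{le:bound} uses only zeros with $|t-\gamma|\leq 1$; the box constrains only $|\gamma-t|\leq\mathcal{L}^{-1}$, but the dyadic count already handles the others.
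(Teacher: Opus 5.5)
Your argument is correct and is essentially the paper's proof. Both first show $\frac{L'}{L}(u+it,\chi)\ll A^2\mathcal{L}$ for $u\geq 1-A\mathcal{L}^{-1}$, by integrating the derivative bound of Lemma~\ref{le:bound} from a trivially controlled anchor just to the right of $1$, and then integrate over the segment of length $A\mathcal{L}^{-1}$. The paper anchors at $1+\mathcal{L}^{-1}$; your anchor at $1+A\mathcal{L}^{-1}$, with large $\sigma$ handled separately by the trivial bound, is if anything slightly more careful.

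The only slip is the aside ``or more simply $u_1=2$''. Integrating the bound of Lemma~\ref{le:bound} up to $2$ would cost about $\mathcal{L}^3$, so the anchor must stay within $O(A\mathcal{L}^{-1})$ of $1$, which is what your actual computation does.
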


\begin{remark} By a standard argument based on the Borel-Carath\'eodory inequality (see e.g.\ \cite[p.\ 158]{Te}) one can show the slightly stronger bound 
\[ 
\log L  (\sigma + it, \chi  ) - \log L (\sigma + A \mathcal{L}^{-1} + it, \chi ) \ll A
\]
subject to the slightly stronger requirement that  $L(s, \chi)$ has no zeros in the larger region 
\[
\Re s > 1 - (A + 1) \mathcal{L}^{-1}, \quad |\Im  s - t|\leq 1.
\]
Here we prefer to keep the assumptions on zero-free regions as minimal as possible. 
\end{remark}

\begin{proof}
 For any $\sigma \geq 1 - A \mathcal{L}^{-1}$ we have by Lemma \ref{le:bound} that 
  \begin{equation*} 
  \frac{L'}{L}(\sigma + it, \chi) - \frac{L'}{L}   (1 + \mathcal{L}^{-1} +it, \chi   ) = \int_{\sigma}^{1 +  \mathcal{L}^{-1}} \Big ( \frac{L'}{L} \Big )'(x + it) \, {\rm d} x
    \ll |\sigma - 1 -  \mathcal{L}^{-1}| \cdot A  \mathcal{L}^2 \ll A^2 \mathcal{L} 
\end{equation*}
  since $|\sigma - 1 -  \mathcal{L}^{-1}| \ll A \mathcal{L}^{-1}$.
  Moreover,
  \[
  \frac{L'}{L}   (1 +  \mathcal{L}^{-1} + it, \chi   ) \ll \frac{\zeta'}{\zeta}  (1 +  \mathcal{L}^{-1}   ) \ll  \mathcal{L}, 
  \]
  so that 
  \begin{equation*} 
  \frac{L'}{L}(\sigma + it, \chi) \ll A^2 \mathcal{L}. 
  \end{equation*}
  for all $1 -A  \mathcal{L}^{-1} \leq \sigma$. Hence 
    \[
  \log L(\sigma + it, \chi) - \log L   ( \sigma +A  \mathcal{L}^{-1}+ it, \chi  ) = \int_{\sigma}^{\sigma + A  \mathcal{L}^{-1}} \frac{L'}{L}(x + it, \chi) \, {\rm d} x
  \ll \frac{A}{ \mathcal{L}} \cdot A^2  \mathcal{L} \ll A^3
  \]
  as claimed.
  \end{proof}
\subsection{Bounding short character sums over primes}

We are now ready to prove Theorem \ref{zero-theorem}. Replacing $W(x)$ with $ W(x)e^x / e^2$  (which is still compactly supported in $[1,2]$ and bounded by $1$), it suffices to bound
  \[
\mathcal{S}_0(X) =   \sum_{p} \frac{\chi(p)}{p^{1 + 1/\log X}} W \Big ( \frac{\log p}{\log X} \Big ). 
  \]
 Since trivially
  \[
 \Big| \sum_{p} \frac{\chi(p)}{p^{1 + (1 + \Delta) / \log X}} W \Big ( \frac{\log p}{\log X} \Big ) \Big| \leq e^{-\Delta}  \sum_{X \leq p \leq X^2} \frac{1}{p^{1 + 1/ \log X}} \ll e^{-\Delta}, 
  \]
  we have 
  \begin{equation*} 
\mathcal{S}_0(X) =   \sum_{p} \Big ( \frac{\chi(p)}{p^{1 + 1 / \log X}} - \frac{\chi(p)}{p^{1 + (1 + \Delta) / \log X}} \Big ) W \Big ( \frac{\log p}{\log X} \Big ) + O(e^{-\Delta}). 
  \end{equation*}
  Opening $W$ into a Fourier transform denoted by $\widecheck{W}$, the main term becomes
  \begin{align*} \int_{\mathbb{R}} \widecheck{W}(\xi) \Big ( \log L\Big (1 + \frac{1}{\log X} + \frac{2\pi i\xi}{\log X}, \chi \Big ) - \log L \Big ( 1 + \frac{\Delta + 1}{\log X} + \frac{2\pi i \xi}{\log X}, \chi \Big ) \Big ) d \xi + O \Big (\frac{1}{X} \Big ).
  \end{align*}
  Let
  \[
  H(s) = \log L \Big ( 1 + \frac{1}{\log X} + s, \chi \Big ) - \log L \Big ( 1 + \frac{\Delta + 1}{\log X} + s, \chi \Big ),
  \]
  so that after a change of variables we have 
  \[
  \mathcal{S}_0(X) =  \log X \int_{(0)} \widecheck{W}\Big(\frac{s \log X }{2\pi i}\Big) H(s) \frac{{\rm d}s}{2\pi i} + O(e^{-\Delta}).
  \]
  For each $t$ we define $A_t \geq 1$ by 
  \begin{equation*}
  \frac{A_t}{\log (q \tau)} = \frac{\Delta}{\log X}.
  \end{equation*}
  Our assumptions imply that $L(s, \chi)$ has no zeros in $\mathcal{C}_q (1 - (A_t + 1)\mathcal{L}^{-1}, t)$, uniformly in $|t| \leq T/\log X$, since $T \geq 1$ and $\Delta/\log X \geq \mathcal{L}^{-1}$. Hence Lemma \ref{le:main} gives
  \begin{equation} \label{eq:fbound}
  H(\sigma + it) \ll A_t^3 \ll \Big ( \frac{\Delta}{\eta} \Big )^3 \quad \text{ uniformly in }  \quad - \frac{\Delta}{\log X} \leq \sigma \text{ and } |t| \leq \frac{T}{\log X}.
  \end{equation}
 Since $W$ is compactly supported, the function $\widecheck{W}(\xi)$ is entire in $\xi$ and on fixed horizontal lines satisfies the bound
 $$\widecheck{W}(\xi) \ll_K \frac{e^{-2 \pi \Im \xi}}{(1 + |\xi|)^{K}}$$
 for any $K > 0$. 
 
  We now deform the contour into the following three regions as follows:
  \begin{align}
   \label{eq:first} |t| \geq \frac{T}{\log X} \ & , \ \sigma = 0, \\
   \label{eq:second} t = \pm \frac{T}{\log X} \ & , \ - \frac{\Delta}{\log X}  \leq \sigma \leq 0, \\
   \label{eq:third} |t| \leq \frac{T}{\log X} \ & , \ \sigma = - \frac{\Delta}{\log X}.
  \end{align}
For $\sigma \geq 0$ we use the trivial bound
\[
  |H(\sigma + it)| \leq  \sum_{p} \frac{1}{p} \cdot \Big ( \frac{1}{p^{1 / \log X}} - \frac{1}{p^{(\Delta + 1) / \log X}} \Big ) + O(1) \ll \log (2+\Delta).
\]
Therefore the contribution of the region \eqref{eq:first} to the main term of $\mathcal{S}_0(X)$ is
\[
\ll_K  \log X \cdot \log \Delta \int_{T/\log X}^{\infty} (1 + t \log X)^{-K} \ll_{K} \log (2+\Delta) \cdot T^{-(K-1)}
\]
for any choice of $K > 10$. We use \eqref{eq:fbound} in the remaining regions \eqref{eq:second} and \eqref{eq:third}. The region \eqref{eq:second} contributes
\[
  \ll_K  \Big(\frac{\Delta }{ \eta}\Big)^3 ( \log X)  T^{-K} \int_{-\Delta/\log X}^0 e^{ \sigma} d\sigma \ll \Big(\frac{\Delta }{ \eta}\Big)^3 \Delta  T^{-K}. 
\]  
Finally, the region \eqref{eq:third} contributes
\[
\log X \cdot \Big ( \frac{\Delta}{\eta} \Big )^3 \cdot \int_{|t| \leq T / \log X} \frac{e^{-\Delta}}{1 + |t \log X|^{10}} \, {\rm d} t   \ll  \Big  ( \frac{\Delta}{\eta} \Big )^3 e^{-\Delta}.
\]
Combining all estimates completes the proof of Theorem \ref{zero-theorem}.

\section{Pigeonhole}\label{finalsec}

We now give the proof of Corollary \ref{lem61}. We show the bound $S_D(\pi_1,\pi_2)>\kappa \log\psi(D)$ if $B$ is chosen sufficiently large in the definition of $S_D(\pi_1,\pi_2)$. The other claim is proved similarly. 

Let $\varepsilon>0$ be as in Theorem \ref{thm15} and $X > D^{1/\psi(D)}$ as in Theorem \ref{thm14}. We have 
\[
\sum_{\substack{X \leq p \leq X^2 \\ \chi_{-D}(p) = 1\\  |\lambda_{\pi_1}(p)|, |\lambda_{\pi_2}(p)|\leq B}} \frac{(\lambda_{\pi_1}(p) - \lambda_{\pi_2}(p))^2}{p} >  \varepsilon^2 \sum_{\substack{X \leq p \leq X^2 \\ \chi_{-D}(p) = 1\\  |\lambda_{\pi_1}(p) - \lambda_{\pi_2}(p)| \geq \varepsilon\\ |\lambda_{\pi_1}(p)|, |\lambda_{\pi_2}(p)|\leq B}} \frac{1}{p},
\]
Of the three conditions on $p$, the density for which the first occurs is given by Theorem \ref{thm14}, with, say, $\delta=0.4999$. Theorem \ref{thm15} implies that $$\sum_{\substack{X \leq p \leq X^2 \\ |\lambda_{\pi_1}(p) - \lambda_{\pi_2}(p)| \geq \varepsilon}} \frac{1}{p} \geq 0.501\sum_{X \leq p \leq X^2} \frac{1}{p}.$$
Indeed, this follows by splitting the range $X \leq p \leq X^2$ into pieces of the form $[Y, \gamma Y]$ with $\gamma > 1$ sufficiently small and estimating $1/p \geq 1/(\gamma Y)$ for $p \in [Y, \gamma Y]$. Finally, Rankin--Selberg theory governs the last condition, since
\[
\sum_{\substack{X \leq p \leq X^2\\ \max(|\lambda_{\pi_1}(p)|, |\lambda_{\pi_2}(p)| > B}}\frac{1}{p} \leq  \frac{1}{B^2} \sum_{X \leq p \leq X^2} \frac{\lambda_{\pi_1}(p)^2 + \lambda_{\pi_2}(p)^2 }{p}  =   \frac{2 + o(1)}{B^2} \sum_{X \leq p \leq X^2} \frac{1}{p}.
\]
Using inclusion-exclusion we obtain
\[
\sum_{\substack{X \leq p \leq X^2 \\ p \textrm{  split }\\  |\lambda_{\pi_1}(p)|, |\lambda_{\pi_2}(p)|\leq B}} \frac{(\lambda_{\pi_1}(p) - \lambda_{\pi_2}(p))^2}{p} \geq \varepsilon^2 \Big(0.4999 + 0.501 + \frac{B^2 - 2 + o(1)}{B^2} - 2\Big)  \sum_{X \leq p \leq X^2}\frac{1}{p}.
\]
In particular, for $B$ sufficiently large there is $\kappa>0$ such that 
\[
\sum_{\substack{X \leq p \leq X^2\\ p \textrm{ is split }\\  |\lambda_{\pi_1}(p)|, |\lambda_{\pi_2}(p)|\leq B}} \frac{(\lambda_{\pi_1}(p) - \lambda_{\pi_2}(p))^2}{p} \geq  \kappa \sum_{X \leq p \leq X^2} \frac{1}{p},
\]
valid uniformly in $X$ in the range $[D^{1/\psi(D)},D^c]$. Summing over all intervals of the shape $[Z, Z^2]$ for $Z = X, X^2, X^4, \ldots, D^{c/2}$ we obtain the desired claim.

\subsection*{Acknowledgements} We would like to thank Steve Lester, Philippe Michel, Asbj\o rn Nordentoft, and Radu Toma for helpful conversations.

\end{document}